\documentclass[11pt]{amsart}
  \usepackage{amsfonts,amsthm,amssymb,eucal,color,enumerate,extarrows,graphicx}
  \usepackage[all]{xy}
  \usepackage[colorlinks,citecolor=magenta,linkcolor=black]{hyperref}
  \usepackage[a4paper,margin=1.2in]{geometry}  
 
  \numberwithin{equation}{section}

  \DeclareMathOperator{\Spec}{Spec}
  \DeclareMathOperator{\cok}{cok}
  \DeclareMathOperator{\im}{im}

  \newcommand{\cO}{\mathcal{O}}
  \newcommand{\Aff}{\mathbf{A}}
  \newcommand{\CC}{\mathbf{C}}
  \newcommand{\FF}{\mathbf{F}}
  \newcommand{\NN}{\mathbf{N}}
  \newcommand{\PP}{\mathbf{P}}
  \newcommand{\QQ}{\mathbf{Q}}
  \newcommand{\RR}{\mathbf{R}}
  \newcommand{\ZZ}{\mathbf{Z}}

  \newcommand{\isomto}{\xrightarrow{\,\smash{\raisebox{-0.65ex}{\ensuremath{\scriptstyle\sim}}}\,}}
  
  \newtheorem{thmi}{Theorem} 
  \newtheorem{thm}{Theorem}[section]
  \newtheorem{lemma}[thm]{Lemma}
  \newtheorem{prop}[thm]{Proposition}
  \newtheorem{cor}[thm]{Corollary}
  \newtheorem{conj}[thm]{Conjecture}

  \theoremstyle{definition} 
  \newtheorem{defin}[thm]{Definition}
  \newtheorem{rmk}[thm]{Remark}
  \newtheorem{example}[thm]{Example}

  \author[P.\ Achinger]{Piotr Achinger}
  \address{Institute of Mathematics of the Polish Academy of Sciences 
      \newline \indent ul.\ Śniadeckich 8, 00-656 Warsaw, Poland}
  \email{pachinger@impan.pl}

  \title[Hodge symmetry for rigid varieties via log hard Lefschetz]{Hodge symmetry for rigid varieties\\ via log hard Lefschetz}

\begin{document}

\begin{abstract}
  Motivated by a question of Hansen and Li, we show that a smooth and proper rigid analytic space $X$ with projective reduction satisfies Hodge symmetry in the following situations: (1) the base non-archimedean field $K$ is of residue characteristic zero, (2) $K$ is $p$-adic and $X$ has good ordinary reduction, (3) $K$ is $p$-adic and $X$ has ``combinatorial reduction.'' We also reprove a version of their result, Hodge symmetry for $H^1$, without the use of moduli spaces of semistable sheaves. All of this relies on cases of Kato's log hard Lefschetz conjecture, which we prove for $H^1$ and for log schemes of ``combinatorial type.''
\end{abstract}

\maketitle

\section{Introduction}

It is one of the basic consequences of Hodge theory that a compact K\"ahler manifold $X$ satisfies Hodge symmetry, i.e.
\[ 
  h^{i,j}(X) = h^{j,i}(X) 
  \quad \text{for all} \quad i, j \geq 0.
\]
Here $h^{i,j}$ denotes the dimension of the Dolbeault cohomology group $H^j(X, \Omega_X^i)$, where $\Omega^i_X$ is the $i$-th sheaf of holomorphic differential forms on $X$. As a consequence, the analogous statement holds for algebraic differential forms on smooth projective (or only proper) algebraic varieties over a field of characteristic zero. In fact, one can avoid Hodge theory and prove the latter fact using only algebraic methods, by means of Deligne's proof of the hard Lefschetz theorem and the Hodge--Tate decomposition in $p$-adic Hodge theory (see \cite[I.4.4]{FontaineMessing}).

This well-known link between hard Lefschetz and Hodge symmetry is very simple. For a~smooth projective variety $X$ over $\CC$, the isomorphisms induced by the Lefschetz operator
\[ 
  L^r \colon H^{n-r}_{\rm dR}(X)\isomto H^{n+r}_{\rm dR}(X)
\] 
are strictly compatible with the Hodge filtrations with appropriate shifts (i.e., inducing an isomorphism ${\rm Fil}^i\isomto {\rm Fil}^{i+r}$). This yields for $i+j=n-r$, $n=\dim X$:
\[ 
  \dim H^j(X, \Omega^i_X) = \dim {\rm gr}^j H^{n-r}_{\rm dR}(X) = \dim {\rm gr}^{j+r} H^{n+r}_{\rm dR}(X) =  \dim H^{r+j}(X, \Omega^{n-j}_X), 
\]
and by Serre duality, the right hand side equals $\dim H^i(X, \Omega^j_X)$. 

The recent years have seen a surge of interest in understanding to what extent the results in complex analytic geometry can be imported to rigid analytic geometry over some non-archimedean field $K$ \cite{ScholzeICM}. If we regard smooth proper rigid spaces as non-archimedean analogs of compact complex manifolds, then what should be the analog of the K\"ahler condition? The answer has been recently suggested by Li \cite{Li}, who studied the class of rigid spaces with \emph{projective reduction}, i.e.\ admitting a formal model whose special fiber is projective. The following result of Hansen and Li confirmed this expectation. 

\setcounter{thmi}{-1}
\begin{thmi}[{\cite[Theorem~1.2]{HansenLi}}] \label{thm:hansen-li}
  Let $X$ be a smooth proper rigid space over a~$p$-adic field $K$. Assume that $X$ has a formal model $\mathfrak{X}$ over $\cO_K$ whose special fiber is projective. Then
  \[
    h^{1,0}(X) = h^{0,1}(X).
  \]
\end{thmi}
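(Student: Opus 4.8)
The plan is to run the hard-Lefschetz-plus-duality mechanism recalled in the introduction, with the Lefschetz operator supplied by the \emph{projective reduction} and the hard Lefschetz isomorphism supplied by Kato's log hard Lefschetz conjecture in degree one. Write $n = \dim X$. By the Hodge--de Rham degeneration for smooth proper rigid spaces (Scholze), the Hodge filtration on $H^1_{\rm dR}(X/K)$ is the two-step filtration with
\[
  {\rm gr}^0 H^1_{\rm dR}(X) = H^1(X,\cO_X), \qquad {\rm gr}^1 H^1_{\rm dR}(X) = H^0(X,\Omega^1_X),
\]
so that $h^{0,1} = \dim {\rm gr}^0$ and $h^{1,0} = \dim {\rm gr}^1$; the asserted equality is precisely that these two graded pieces have the same dimension.

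First I would manufacture the Lefschetz operator from the model. The special fiber $Y = \mathfrak{X}_s$ is projective, hence carries an ample line bundle; after a finite extension of $K$ (which leaves every $h^{i,j}$ unchanged, as the Hodge groups are computed over the base field and commute with flat base change) I equip $Y$ with the log structure coming from $\mathfrak{X}$. The first Chern class $\eta$ of the ample bundle is a $(1,1)$-class, so cup product with it defines an operator $L$ sending ${\rm Fil}^a$ into ${\rm Fil}^{a+1}$. Through a log comparison isomorphism of Hyodo--Kato type, $H^{*}_{\rm log}(Y)$ is identified with $H^{*}_{\rm dR}(X/K)$ compatibly with Hodge filtrations, so $L$ and its powers act on de Rham cohomology and raise the Hodge filtration accordingly. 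It is essential that $\eta$ lives only on the special fiber and need not extend to $\mathfrak{X}$ or to $X$: were it to extend, Grothendieck existence would algebraize $\mathfrak{X}$ and exhibit $X$ as the analytification of a projective $K$-variety, rendering the statement classical. This is precisely what makes the theorem non-trivial.

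Granting the degree-one log hard Lefschetz statement, namely that
\[
  L^{n-1} \colon H^1_{\rm dR}(X) \isomto H^{2n-1}_{\rm dR}(X)
\]
is an isomorphism strictly compatible with the Hodge filtration (with shift $n-1$), the conclusion is immediate. Strictness turns $L^{n-1}$ into a filtered isomorphism, so it restricts to ${\rm gr}^1 H^1 \isomto {\rm gr}^{n} H^{2n-1} = H^{n-1}(X,\Omega^n_X)$, and Serre duality identifies the target with the dual of $H^1(X,\cO_X)$. Hence
\[
  h^{1,0} = \dim {\rm gr}^1 H^1 = \dim H^{n-1}(X,\Omega^n_X) = \dim H^1(X,\cO_X) = h^{0,1}.
\]

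The genuine content, and the step I expect to be the main obstacle, is the degree-one log hard Lefschetz isomorphism together with its strict compatibility with the Hodge filtration. The filtration shift is formal once $\eta$ is a $(1,1)$-class, so the substance is the bijectivity of $L^{n-1}$ on $H^1$, equivalently the nondegeneracy of the alternating pairing $(a,b) \mapsto \langle a, L^{n-1} b\rangle$ on $H^1$ furnished by Poincar\'e duality. I would attack this via the structure of $H^1$ in the log setting, where it is governed by a semi-abelian variety (or one-motive) attached to $Y$ together with its monodromy operator, the ample class providing a polarization whose associated symplectic form is exactly this pairing. Proving that this form is nondegenerate --- compatibly with the weight-monodromy filtration, so that the toric and combinatorial parts of $H^1$ are correctly paired --- is where the real work lies, and is what allows one to dispense with the moduli-of-semistable-sheaves argument of Hansen and Li.
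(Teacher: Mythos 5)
Your deduction of Hodge symmetry from log hard Lefschetz contains a genuine gap at its central step: the claim that $\eta=c_1(\mathcal{L})$ is a $(1,1)$-class, so that $L$ shifts the Hodge filtration and $L^{n-1}\colon H^1_{\rm dR}(X/K)\to H^{2n-1}_{\rm dR}(X/K)$ is \emph{strictly} compatible with ${\rm Fil}^\bullet$. As you yourself emphasize, $\mathcal{L}$ lives only on the special fiber $Y$ and does not lift to $\mathfrak{X}$ or $X$. Consequently, under the Hyodo--Kato isomorphism $H^*_{\text{log-cris}}(Y/W(k))[1/p]\otimes_{K_0}K\simeq H^*_{\rm dR}(X/K)$ the class $c_1(\mathcal{L})$ carries only the structure of a Frobenius eigenclass of slope one in the $(\varphi,N)$-module; there is no reason for it to lie in ${\rm Fil}^1H^2_{\rm dR}(X/K)$, and the Hyodo--Kato isomorphism is not ``compatible with Hodge filtrations'' in any sense that would make cup product with $c_1(\mathcal{L})$ a filtered operator. (A Chern class of a line bundle on $X$ itself would be filtered --- but that is exactly the projective situation you rightly want to avoid.) Moreover, this failure is not a fixable technicality: your argument uses nothing special about degree one, so if valid it would prove $h^{i,j}(X)=h^{j,i}(X)$ in every degree where log hard Lefschetz holds. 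Petrov's counterexample, however, is the generic fiber of a \emph{smooth} formal scheme over $\ZZ_p$ with projective special fiber --- where log hard Lefschetz is just the usual crystalline hard Lefschetz and holds in all degrees --- yet Hodge symmetry fails in degree $3$. So strict filtration compatibility of $L$ genuinely fails in the $p$-adic setting; this is precisely the point where the mixed-characteristic case diverges from the complex/equal-characteristic-zero case, where $L$ \emph{is} a morphism of mixed Hodge structures.

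What the paper does instead is use only the Frobenius-equivariance of $L^{n-1}$: log hard Lefschetz for $H^1$ (Theorem~\ref{thm:log-HL-H1}) plus Poincar\'e duality give an isomorphism of $F$-isocrystals $H^1(Y)\isomto H^1(Y)^\vee(-1)$, hence the slope symmetry $\alpha_{s-k}=1-\alpha_k$ (Proposition~\ref{prop:log-HL-implies-SS}). The bridge from slopes back to Hodge numbers is then \emph{weak admissibility} of $H^1(Y)$ as a filtered $(\varphi,N)$-module (Hyodo--Kato together with the comparison theorem of Colmez--Nizio{\l}): the endpoints of the Hodge and Newton polygons coincide, $t_H=t_N$, which converts slope symmetry into the linear relation $\sum_{i+j=q}(i-j)\,h^{i,j}(X)=0$ (Proposition~\ref{prop:linear-rel-hij}); for $q=1$ this is exactly $h^{1,0}=h^{0,1}$, and this linear relation is all one gets --- consistent with the degree-$3$ counterexample. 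On the positive side, your identification of log hard Lefschetz for $H^1$ (nondegeneracy of the pairing $\langle a,L^{n-1}b\rangle$) as the real content, and your sketch via the semi-abelian structure of $H^1$ with the ample class providing a polarization, is sound and close in spirit to the paper's actual proof (weight spectral sequence, Ito-type lemmas, weight--monodromy for $H^1$ in Proposition~\ref{prop:WM-H1}, and a polarization of the Albanese in Lemma~\ref{lemma:polarization}). Note finally that you also need a strictly semistable model to define the log structure and run this machinery; producing one from an arbitrary formal model with projective special fiber is not known in mixed characteristic, which is why the paper only ``almost'' recovers the Hansen--Li theorem as stated.
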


Hansen and Li asked \cite[Question~1.3]{HansenLi} (also \cite[Conjecture~2.4]{ScholzeICM}) whether under the same assumptions one has $h^{i,j}(X) = h^{j,i}(X)$ for all $i,j\geq 0$. It turns out that their conjecture is not true as stated, but actually depends on some arithmetic properties of the reduction type. In this paper we prove that it holds in the following situation:

\begin{thmi}[Cases of Hodge symmetry] \label{thmi:HS}
  Let $K$ be a complete discretely valued field of characteristic zero with ring of integers $\cO_K$ and perfect residue field $k$. Let $X$ be a smooth proper rigid space over $K$ admitting a semistable model $\mathfrak{X}$ over $\cO_K$ whose special fiber $Y=\mathfrak{X}_0$ is projective. Then $h^{i,j}(X) = h^{j,i}(X)$ for $i,j\geq 0$ under one of the following conditions:
  \begin{enumerate}[\hphantom{xxx}(a)]
    \item \label{thmi:HS-char0} If $k$ has characteristic zero (Theorem~\ref{thm:HS-char-0}).
    \item \label{thmi:HS-good-ordinary} If $k$ has characteristic $p>0$ and $X$ has good ordinary reduction (Theorem~\ref{thm:HS-p-adic}(\ref{thm:HS-good-ordinary})).
    \item \label{thmi:HS-H1} If $i+j=1$, (almost) recovering Theorem~\ref{thm:hansen-li} (Theorem~\ref{thm:HS-p-adic}(\ref{thm:HS-H1})).
    \item \label{thmi:HS-comb-red} If arbitrary intersections of irreducible components of $Y$ have torsion-free crystalline cohomology and admit projective liftings to characteristic zero whose cohomology is generated by algebraic cycles and such that the relative Hodge cohomology is torsion-free (Theorem~\ref{thm:HS-p-adic}(\ref{thm:HS-comb-red})).
  \end{enumerate}
\end{thmi}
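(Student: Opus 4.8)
The plan is to transport the classical hard-Lefschetz argument recalled above into the semistable setting. I would isolate three ingredients. First, degeneration of the Hodge--de Rham spectral sequence for the smooth proper rigid space $X$, which gives $h^{i,j}(X)=\dim{\rm gr}^i_F H^{i+j}_{\rm dR}(X/K)$ and holds for all smooth proper rigid spaces. Second, Serre duality on $X$, yielding the elementary symmetry $h^{i,j}(X)=h^{n-i,n-j}(X)$ with $n=\dim X$; in particular we may assume $i+j\le n$ and put $r=n-i-j\ge 0$. Third, and this is the crux, a hard-Lefschetz isomorphism $L^r\colon H^{n-r}_{\rm dR}(X)\isomto H^{n+r}_{\rm dR}(X)$ that is \emph{strictly} compatible with the Hodge filtration up to the shift by $r$. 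Granting the third point, the computation from the introduction applies verbatim: $h^{i,j}=\dim{\rm gr}^i_F H^{n-r}_{\rm dR}=\dim{\rm gr}^{i+r}_F H^{n+r}_{\rm dR}=h^{n-j,n-i}$, and Serre duality rewrites the last term as $h^{j,i}$.

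The operator $L$ and its interaction with the Hodge filtration both come from the model. The semistable fiber $Y=\mathfrak{X}_0$, with its natural log structure $M_Y$, has a log de Rham (equivalently log crystalline) cohomology whose Hodge filtration matches, under the comparison with $H^\ast_{\rm dR}(X/K)$, the filtration computing the rigid Hodge numbers; so $h^{i,j}(X)$ is the dimension of ${\rm gr}^i_F$ of the degree-$(i+j)$ log cohomology of $(Y,M_Y)$. Since $Y$ is projective, an ample class gives a Lefschetz operator $L$ of Hodge type $(1,1)$, and hence $L^r$ sends ${\rm Fil}^i$ into ${\rm Fil}^{i+r}$ automatically. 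The third ingredient thus splits into two demands on $L^r$: its \emph{bijectivity}, which is precisely Kato's log hard Lefschetz conjecture for $(Y,M_Y)$, and its \emph{strictness} for the Hodge filtration. In the Hodge-theoretic regimes strictness is free---morphisms of mixed Hodge structures are strict---but over a $p$-adic base it is a genuine extra constraint that has to be secured separately.

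It then remains to verify these two demands in each case. When $k$ has characteristic zero (case (a)), the log cohomology of $(Y,M_Y)$ carries Steenbrink's limit polarized mixed Hodge structure, in which $L$ and the monodromy $N$ generate the expected $\mathfrak{sl}_2$-actions and hard Lefschetz for $L$ is part of the polarization; strictness is automatic, so Hodge symmetry follows. When $i+j=1$ (case (c)) only $H^1$ is at stake, and log hard Lefschetz can be proved by hand, which is how I would reprove a version of Theorem~\ref{thm:hansen-li} without semistable-sheaf moduli. For ``combinatorial'' reduction (case (d)) the hypotheses let me lift each intersection of irreducible components of $Y$ to a projective characteristic-zero variety with algebraic---hence hard-Lefschetz---cohomology, and then propagate the isomorphism up the weight (Mayer--Vietoris) spectral sequence of the semistable fiber, the torsion-freeness assumptions guaranteeing that no pathology intervenes. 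For good ordinary reduction (case (b)), $Y$ is smooth and I would combine Katz--Messing hard Lefschetz in crystalline cohomology with ordinarity, i.e.\ the coincidence of the Hodge and Newton polygons, which supplies the Frobenius-compatible splitting of the Hodge filtration needed to make $L^r$ strict.

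The main obstacle is exactly the passage to positive residue characteristic. In case (a) the ambient Hodge theory delivers both bijectivity and strictness for nothing, but over a $p$-adic base there is no polarized Hodge structure, so neither the hard-Lefschetz isomorphism nor its strict compatibility with ${\rm Fil}$ is available a priori. This is why the three $p$-adic cases must each purchase the missing input by a different device---ordinarity, the restriction to $H^1$, or liftability with algebraic cohomology---and, as the introduction warns, why the unrestricted conjecture of Hansen and Li cannot hold: without such arithmetic hypotheses log hard Lefschetz, and with it Hodge symmetry, can genuinely fail.
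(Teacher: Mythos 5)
Your case (a) is essentially the paper's own argument (limit mixed Hodge structure, log hard Lefschetz as part of the polarization, strictness of morphisms of mixed Hodge structures), though you gloss over the step identifying $h^{i,j}(X)$ with the log Hodge numbers of $Y$: this is not just degeneration on the generic fiber but the freeness and base-change theorem for relative log Hodge cohomology of $\mathfrak{X}/\cO_K$ (Proposition~\ref{prop:ikn-relative-log-hodge}, Corollary~\ref{cor:hodge-free-formal}). The genuine gap is in the $p$-adic cases, where your central mechanism --- a hard Lefschetz isomorphism on $H^*_{\rm dR}(X/K)$ \emph{strictly} compatible with the Hodge filtration --- is precisely what is unavailable, and you never secure it. Two concrete problems. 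First, it is not even clear that $L^r$ is a morphism of filtered modules: $c_1(\mathcal{L})$ is a log-crystalline class of the special fiber, $\mathcal{L}$ need not lift to $\mathfrak{X}$, and nothing guarantees that its image under the Hyodo--Kato isomorphism lies in ${\rm Fil}^1 H^2_{\rm dR}(X/K)$ (weak admissibility only gives the upper bound $t_H\leq 1$ for the $(\varphi,N)$-stable line it spans, not the lower bound you need). Second, for case (c) you offer no device whatsoever to obtain strictness, so proving log hard Lefschetz for $H^1$ ``by hand'' does not, within your framework, yield $h^{1,0}=h^{0,1}$. The paper's route avoids any interaction of $L$ with the filtration: log hard Lefschetz plus Poincar\'e duality give a \emph{Frobenius-equivariant} isomorphism $H^q(Y)\simeq H^q(Y)^\vee(-q)$, hence the slope symmetry $\alpha_{s-k}=q-\alpha_k$ (Proposition~\ref{prop:log-HL-implies-SS}); weak admissibility of $H^q(Y)$ as a filtered $(\varphi,N)$-module (Colmez--Nizio\l{}) gives the single numerical identity $t_H=t_N$, and the two combine into $\sum_{i+j=q}(i-j)\,h^{i,j}(X)=0$ (Proposition~\ref{prop:linear-rel-hij}). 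That linear relation alone settles $q\leq 2$, hence case (c); and when $Y$ is ordinary, Newton polygon $=$ Hodge polygon turns slope symmetry directly into Hodge symmetry (Corollary~\ref{cor:ordinary-HS}), which is what settles (b) and (d) --- no strictness, and no filtered structure on $L$, is ever invoked.

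In case (d), your plan to ``propagate the isomorphism up the weight spectral sequence, the torsion-freeness assumptions guaranteeing that no pathology intervenes'' misidentifies the difficulty. Hard Lefschetz on the $E_1$-page does \emph{not} formally descend to the $E_2$-page; that descent is the entire content of the log hard Lefschetz conjecture, and it can fail without a positivity input. What the paper actually uses is the Hodge standard conjecture for the strata $Y^{(k)}$ --- deduced from the characteristic-zero lift, because algebraic cohomology identifies homological with numerical equivalence and lets one transport the Hodge index theorem from the lift --- which feeds into M.~Saito's polarized Hodge--Lefschetz formalism (the Ito modules of \S\ref{ss:ito-modules}, Theorem~\ref{thm:HV-is-Ito}) to produce log hard Lefschetz on $E_2$ (Corollary~\ref{cor:log-HL-comb}). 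Torsion-freeness plays an entirely different role: together with freeness of the Hodge cohomology of the lift it forces the strata, and hence $Y$ (Lemma~\ref{lemma:Yi-Y-ordinary}), to be \emph{ordinary}, which is what converts slope symmetry into Hodge symmetry. Likewise in case (b), your appeal to a ``Frobenius-compatible splitting making $L^r$ strict'' should simply be replaced by: ordinarity means the Hodge numbers equal the slope multiplicities, and Katz--Messing hard Lefschetz gives the slope symmetry.
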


On the other hand, in \cite{Petrov}, Petrov constructs a stunning counterexample to the general question. It is obtained as the generic fiber of a quotient of a carefully chosen formal abelian variety over $\ZZ_p$ by a finite cyclic group action, and violates Hodge symmetry in degree $3$. 

Case (\ref{thmi:HS-comb-red}) of the above theorem encompasses varieties with ``totally degenerate'' or ``combinatorial'' reduction one often encounters in the context of mirror symmetry, e.g.\ ones where the special fiber $Y$ is a union of toric varieties glued along toric subvarieties (see Example~\ref{ex:cellular}). It could be worthwile to come up with examples of such $X$ which are not deformation equivalent to a projective variety.

The proofs of the above results rely on forms of the log hard Lefschetz theorem applied to the log special fiber of a semistable model of the rigid variety $X$. Let $k$~again be a~perfect field and let $Y$ be a strictly semistable log scheme over $k$ (Definition~\ref{def:strictly-semistable}) purely of dimension $n$. Let $H^*(Y)$ denote either log Betti cohomology $H^*(\tilde Y_{\rm log}, \QQ)$ (if $k=\CC$) , log $\ell$-adic cohomology $H^*_{\text{log-\'et}}(Y_{\bar s}, \QQ_\ell)$, or log crystalline cohomology $H^*_{\text{log-cris}}(Y/W(k))[1/p]$ (if $k$ has characteristic $p>0$), see \S\ref{ss:log-cohomology}. If $\mathcal{L}$ is an ample line bundle on $Y$, then cup product with powers of the logarithmic variant of its first Chern class $c_1(\mathcal{L})\in H^2(Y)$ yields maps 
\begin{equation} \label{eqn:intro-logHL} 
  L^r \colon H^{n-r}(Y) \to H^{n+r}(Y).
\end{equation}
Kato's log hard Lefschetz conjecture \cite[Conjecture~9.5]{Nakkajima} states that \eqref{eqn:intro-logHL} is an isomorphism for all $r\geq 0$. 

If $k=\CC$ and $H^*$ is log Betti cohomology, then the log hard Lefschetz conjecture is a theorem due to Nakkajima \cite[Theorem~9.14]{Nakkajima}, based on M.\ Saito's work on the monodromy filtration \cite[4.2.2]{SaitoMHP}. This in turn implies log Hodge symmetry for Y \cite[Corollary~9.15]{Nakkajima}. Using the base change results of \cite{IKN}, one can then deduce that if $Y$ is the special fiber of a semistable formal scheme $\mathfrak{X}$ over $\cO_K = \CC[\![t]\!]$, then the rigid generic fiber $X = \mathfrak{X}_K$ satisfies Hodge symmetry, proving~(\ref{thmi:HS-char0}). 

Suppose from now on that $K$ is $p$-adic and let $H^*(Y)$ denote log crystalline cohomology $H^*_{\text{log-cris}}(Y/W(k))[1/p]$. Assertions (\ref{thmi:HS-good-ordinary})--(\ref{thmi:HS-comb-red}) rely on the following observation.

\begin{thmi}[Log hard Lefschetz implies symmetry] 
  Let $X$ be a smooth proper rigid analytic space over $K$. Suppose that there exists a strictly semistable formal model $\mathfrak{X}$ of $X$ over $\cO_K$ whose log special fiber $Y = \mathfrak{X}_k$ admits an ample line bundle for which the log hard Lefschetz theorem holds in log crystalline cohomology in degree $q=n-r$ where $n=\dim X$. Then:
  \begin{enumerate}[\hphantom{xxx}(a)]
    \item (Proposition~\ref{prop:log-HL-implies-SS}) The Frobenius slopes $\alpha_0 \leq \ldots \leq \alpha_{s}$ ($s=\dim H^q(Y)-1$) on $H^q(Y)$ satisfy the symmetry
    \[ 
      \alpha_{s-k} = q - \alpha_k. 
    \]  
    \item (Proposition~\ref{prop:linear-rel-hij}) The Hodge numbers $h^{i,j}(X)$ with $i+j = q$ satisfy the relation
    \[ 
      \sum_{i+j = q} (i-j) \cdot h^{i,j}(X) = 0.
    \]
  \end{enumerate} 
  In particular, if $q\leq 2$ or if $Y$ is ordinary, then $h^{i,j}(X) = h^{j,i}(X)$ for $i+j=q$ (see Corollary~\ref{cor:ordinary-HS}).
\end{thmi}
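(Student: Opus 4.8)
The plan is to deduce both assertions from two structural features of log crystalline cohomology, namely the interaction of the Lefschetz operator with Frobenius and Poincaré duality, and then to read off the special cases by elementary bookkeeping. For part (a) I would first record that the logarithmic first Chern class $c_1(\mathcal{L}) \in H^2(Y)$ is a Tate class, $\phi(c_1(\mathcal{L})) = p\, c_1(\mathcal{L})$, so that the Lefschetz operator $L = c_1(\mathcal{L}) \cup (-)$ satisfies $\phi \circ L = p\, L \circ \phi$, and hence $\phi \circ L^r = p^{r}\, L^r \circ \phi$. Since $L^r \colon H^{n-r}(Y) \to H^{n+r}(Y)$ is an isomorphism by the assumed log hard Lefschetz theorem, it is an isomorphism of $F$-isocrystals after twisting the target Frobenius by $p^{-r}$; therefore the Frobenius slopes of $H^{n+r}(Y)$ are exactly those of $H^{q}(Y) = H^{n-r}(Y)$, each raised by $r$, i.e.\ $\{\alpha_k + r\}$. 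On the other hand, Poincaré duality in log crystalline cohomology gives a perfect Frobenius-compatible pairing $H^{n-r}(Y) \times H^{n+r}(Y) \to H^{2n}(Y)$ on whose one-dimensional target Frobenius acts by $p^{n}$; this exhibits the slopes of $H^{n+r}(Y)$ as $\{n - \alpha_k\}$, which in increasing order is $\{n - \alpha_{s-k}\}$. Matching the two ordered slope multisets gives $\alpha_k + r = n - \alpha_{s-k}$, that is $\alpha_{s-k} = q - \alpha_k$ since $q = n-r$.

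For part (b) I would sum the symmetry of (a) over $k$, obtaining $\sum_k \alpha_k = \tfrac{1}{2} q (s+1)$, where $s+1 = \dim H^q(Y) = \dim_K H^q_{\mathrm{dR}}(X/K) = \sum_{i+j=q} h^{i,j}(X)$. The bridge to Hodge numbers is Hyodo--Kato theory: the log crystalline cohomology $H^q_{\text{log-cris}}(Y/W)[1/p]$, equipped with the Hodge filtration transported from $H^q_{\mathrm{dR}}(X/K)$ through the Hyodo--Kato isomorphism, is a weakly admissible filtered $(\phi,N)$-module, so its Newton and Hodge numbers coincide: $\sum_k \alpha_k = t_N = t_H = \sum_i i \cdot \dim \mathrm{gr}^i H^q_{\mathrm{dR}}(X/K)$. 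Invoking Hodge--de Rham degeneration for the smooth proper rigid space $X$ to identify $\dim \mathrm{gr}^i H^q_{\mathrm{dR}}(X/K) = h^{i,q-i}(X)$, the two expressions for $\sum_k \alpha_k$ combine into $\sum_i i\, h^{i,q-i}(X) = \tfrac{q}{2} \sum_i h^{i,q-i}(X)$, which rearranges, using $i-j = 2i-q$ when $i+j=q$, to the asserted relation $\sum_{i+j=q} (i-j)\, h^{i,j}(X) = 0$.

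To conclude Hodge symmetry in the two stated cases: when $q \le 2$ the relation of (b) has at most one nontrivial constraint and forces symmetry directly (for $q=1$ it reads $h^{1,0} = h^{0,1}$; for $q=2$ it reads $2 h^{2,0} = 2 h^{0,2}$, the middle term with coefficient $i-j=0$ dropping out). When $Y$ is ordinary the Newton polygon of $H^q(Y)$ coincides with its Hodge polygon, so each Frobenius slope $i$ occurs with multiplicity exactly $h^{i,q-i}(X)$; the symmetry $\alpha_{s-k} = q-\alpha_k$ from (a) then states that this multiset of slopes is invariant under $i \mapsto q-i$, which is precisely $h^{i,q-i}(X) = h^{q-i,i}(X)$, i.e.\ $h^{i,j}(X) = h^{j,i}(X)$ for $i+j=q$.

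I expect the main obstacle to lie not in the linear algebra but in assembling the $p$-adic Hodge-theoretic inputs with the correct normalizations: one must verify that the Hyodo--Kato isomorphism matches the Frobenius, monodromy, and Hodge structures so that weak admissibility ($t_N = t_H$) genuinely applies to $H^q$, and that log crystalline Poincaré duality together with the slope behaviour $\phi(c_1(\mathcal{L})) = p\, c_1(\mathcal{L})$ hold with the Tate twists tracked precisely. Some care is likewise needed to ensure that Hodge--de Rham degeneration, and hence the identification of the graded pieces of $H^q_{\mathrm{dR}}(X/K)$ with the Hodge numbers $h^{i,j}(X)$, is available for $X$ in the required generality.
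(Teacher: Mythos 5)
Your proposal is correct and follows essentially the same route as the paper: part (a) via Frobenius-equivariance of the Lefschetz operator combined with log crystalline Poincar\'e duality (the paper packages this as an isomorphism $H^q(Y)\isomto H^q(Y)^\vee(-q)$ of $F$-isocrystals, which is your slope-matching argument), and part (b) via the Hyodo--Kato isomorphism, weak admissibility ($t_N=t_H$, citing Colmez--Nizio{\l}) and Hodge--de Rham degeneration, with the same elementary bookkeeping for $q\leq 2$ and the same Newton-equals-Hodge-polygon argument in the ordinary case. The normalization issues you flag (Tate twists, $\varphi(c_1(\mathcal{L}))=p\,c_1(\mathcal{L})$, degeneration for rigid spaces) are exactly the inputs the paper relies on as well.
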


The proof is relatively standard: log hard Lefschetz combined with Poincar\'e duality imply that the $F$-isocrystal $H^q(Y)$ admits an isomorphism $H^q(Y) \isomto H^q(Y)^\vee(-q)$, proving (a). Assertion (b) then follows from the Hyodo--Kato isomorphism 
\[ 
  H^q(Y)\otimes K\isomto H^q_{\rm dR}(X/K)
\] 
and the fact that $H^q(Y)$ endowed with the Hodge filtration on $H^q(Y)\otimes K \simeq H^q_{\rm dR}(X/K)$ is a weakly admissible filtered $(\varphi, N)$-module \cite{ColmezNiziol}, and in particular the endpoints of the Hodge and Newton polygons coincide. Then the $q\leq 2$ case follows directly from (b), and for the last assertion, ordinarity of $Y$ implies that the Newton polygon of $H^q(Y)$ equals the Hodge polygon of $H^q_{\rm dR}(X/K)$, so the slope symmetry (a) gives Hodge symmetry. (A~similar observation has been used in the context of Hodge symmetry for Hodge--Witt varieties in characteristic $p$ by Joshi \cite{Joshi} based on \cite{Ekedahl}, see Remark~\ref{rmk:joshi}.) 

Since hard Lefschetz (no log) holds for crystalline cohomology, we have thus obtained Theorem~\ref{thmi:HS}(\ref{thmi:HS-good-ordinary}). The remaining (\ref{thmi:HS-H1}) and (\ref{thmi:HS-comb-red}) rely on the following cases of log hard Lefschetz. 

\begin{thmi}[Cases of log hard Lefschetz] \label{thmi:log-HL}
  Let $Y$ be a strictly semistable log scheme over $k$, purely of dimension $n$. The log hard Lefschetz conjecture holds in the following situations.
  \begin{enumerate}[\hphantom{xxx}(a)]
    \item \label{thmi:log-HL-H1} For $q=n-r=1$ (Theorem~\ref{thm:log-HL-H1}).
    \item \label{thmi:log-HL-comb} If the cohomology of every intersection of irreducible components of $Y$ is generated by algebraic cycles and if the Hodge standard conjecture is satisfied for these intersections. (Corollary~\ref{cor:log-HL-comb}).
  \end{enumerate}
\end{thmi}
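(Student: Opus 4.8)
The plan is to deduce log hard Lefschetz for $Y$ from ordinary hard Lefschetz together with positivity (the Hodge standard conjecture) for the smooth projective strata of $Y$, using the weight spectral sequence. Write $Y^{(k)}$ for the disjoint union of the $(k+1)$-fold intersections of the irreducible components of $Y$, a smooth projective variety of dimension $n-k$. In each of the three cohomology theories the $E_1$-page of the weight spectral sequence is a direct sum of Tate twists of the $H^*(Y^{(k)})$, it carries the monodromy operator $N$ and the Lefschetz operator $L=c_1(\mathcal{L})\cup(-)$, and it degenerates at $E_2$ (rationally), so that $E_2=E_\infty=\operatorname{gr}^W H^*(Y)$ (Mokrane, Nakkajima). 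The basic observation is an elementary index computation: the summand $H^m(Y^{(k)})(-i)$ contributing to $\operatorname{gr}^W_w H^q(Y)$ has stratum Lefschetz weight $m-(n-k)=q-n$. Thus the Lefschetz grading on $E_1$ coincides with the total cohomological grading centred at $n$, and $L$ acts on each summand through the genuine Lefschetz operator of the corresponding stratum.

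For (b), $L$ commutes with the differential $d_1$ (the restriction and Gysin maps respect cup products) and with $N$ (as $Nc_1(\mathcal{L})=0$), while $d_1$, being built from these restriction and Gysin maps, shifts the Lefschetz grading by one. Consequently $(E_1,d_1,L,N)$ is a polarized bigraded Lefschetz module with differential, in the sense of the $\mathfrak{sl}_2\times\mathfrak{sl}_2$-formalism of Saito and Guill\'en--Navarro Aznar, \emph{provided} each $H^*(Y^{(k)})$ satisfies hard Lefschetz and carries the positive-definite cup-product polarization on its primitive parts. These two inputs are exactly what the hypotheses of (b) supply: if the cohomology of each stratum is generated by algebraic cycles and the Hodge standard conjecture holds for it, then algebraicity plus positivity of the intersection pairing give hard Lefschetz and the polarization in the relevant cohomology theory (in the crystalline case transported through the projective liftings to characteristic zero). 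The abstract hard Lefschetz theorem for such modules then produces hard Lefschetz for $L$ on the $d_1$-cohomology $E_2=\operatorname{gr}^W H^*(Y)$; since $L$ shifts $W$ by $+2$ and is strictly compatible with it, the graded statement lifts to $L^r\colon H^{n-r}(Y)\isomto H^{n+r}(Y)$, which is (b).

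For (a) I would run the same reduction, but in cohomological degree $q=1$ only the strata cohomology in degrees $0$ and $1$ intervenes, and there the required inputs are unconditional. The weight filtration on $H^1(Y)$ has three steps: $\operatorname{gr}^W_1 H^1$ is a subquotient of $\bigoplus_i H^1(Y_i)$, while $\operatorname{gr}^W_0 H^1$ and $\operatorname{gr}^W_2 H^1$ are of Tate type, built from $H^0(Y^{(1)})$ and matched by the (elementary) monodromy isomorphism $N\colon \operatorname{gr}^W_2 H^1\isomto \operatorname{gr}^W_0 H^1(-1)$. On the weight-one part the needed isomorphism is hard Lefschetz in degree one on the components $Y_i$, which holds in log Betti, $\ell$-adic (Deligne) and log crystalline (Katz--Messing) cohomology without any conjectural hypothesis; on the Tate part it reduces, via $N$, to the extremal hard Lefschetz $L^{n-1}\colon H^0(Y^{(1)})\isomto H^{2n-2}(Y^{(1)})$, i.e.\ to the nonvanishing of the top self-intersection $c_1(\mathcal{L}|_{Y^{(1)}})^{n-1}$, which is elementary positivity of an ample class. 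Assembling the graded pieces by strictness then gives (a) unconditionally.

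The main obstacle is the abstract step for (b): checking that the weight spectral sequence genuinely satisfies the axioms of a polarized differential bigraded Lefschetz module, so that hard Lefschetz survives passage to $d_1$-cohomology. The delicate feature is precisely that the restriction and Gysin maps making up $d_1$ raise the Lefschetz grading by one, so that $E_1$ is not a complex of $\mathfrak{sl}_2$-modules and semisimplicity alone is insufficient; this is what forces the use of the polarization rather than a purely formal argument, and it is why (a), where positivity is elementary, is unconditional whereas (b) must assume the Hodge standard conjecture. In the $p$-adic case one must in addition make hard Lefschetz and the polarization available for log crystalline cohomology by transport through the characteristic-zero liftings, and invoke the $E_2$-degeneration and strictness of the weight spectral sequence to descend from $\operatorname{gr}^W H^*(Y)$ to $H^*(Y)$.
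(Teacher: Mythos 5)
Your argument for part~(\ref{thmi:log-HL-comb}) is essentially the paper's: the $E_1$-page of the weight spectral sequence, with $N$, $L$, $d_1$ and the Poincar\'e pairing, is fed into M.~Saito's abstract hard Lefschetz theorem for polarized bigraded Hodge--Lefschetz modules \cite[4.2.2]{SaitoMHP}, and the conclusion passes to $H^*(Y)$ by $E_2$-degeneration. The one mechanism you leave vague is the essential one: over $K_0$ or $\QQ_\ell$ ``positive definite'' has no meaning, and the statement of~(\ref{thmi:log-HL-comb}) provides no liftings to characteristic zero (liftings enter only in Theorem~\ref{thm:HS-p-adic}(\ref{thm:HS-comb-red}), as a device for \emph{verifying} the hypotheses). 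The paper's fix is that hypothesis (A) makes the groups of algebraic classes $A^i(Y^{(k)})$ a $\QQ$-structure on the entire $E_1$-page (the ``Ito module'' of Proposition~\ref{prop:V-is-ito}); positivity is then hypothesis (B) about this $\QQ$-form, each graded piece is regarded as a Tate-type Hodge structure, and only then does Saito's theorem apply (Theorem~\ref{thm:HV-is-Ito}).

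The genuine gap is in part~(\ref{thmi:log-HL-H1}): the two steps you call ``elementary'' or ``unconditional'' are precisely the substance of the paper's proof, and neither follows from the inputs you cite. First, the map $N\colon {\rm gr}^W_2 H^1 \to {\rm gr}^W_0 H^1(-1)$ is the identity of $H^0(Y^{(2)})$ (double intersections, in the paper's indexing) only on the $E_1$-page; what you need is that it induces an isomorphism between the $E_2$-subquotients $\ker(\tau+\rho)$ and $\ker\rho/\im\rho$, and that is the weight--monodromy conjecture for $H^1$ (Proposition~\ref{prop:WM-H1}). This is not formal: $\ker\bigl(\tau\colon H^0(Y^{(2)})\to H^2(Y^{(1)})\bigr)$ records linear relations among the divisor classes of the double intersections inside the components, so the statement is not combinatorial, and weight--monodromy can fail for general proper $Y$ (cf.\ \S\ref{ss:weight-ss}); the paper proves it \`a la Ito using the Hodge index theorem on ${\rm NS}(Y^{(1)})$ (Lemmas~\ref{lemma:ito-l5.5-H0}--\ref{lemma:ito-l5.1}). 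Second, on the weight-one part, hard Lefschetz for the components (Katz--Messing) is not enough. The map to be inverted is $\ell_1\colon \ker(\rho)\to\cok(\tau)$, and since $\cok(\tau)\simeq(\ker\rho)^\vee$ by Poincar\'e duality, $\ell_1$ is an isomorphism if and only if the form $\langle L^{n-1}x,\,y\rangle$ on $H^1(Y^{(1)})$ remains non-degenerate after restriction to the subspace $\ker\rho$ --- and a non-degenerate form can perfectly well degenerate on a subspace, so hard Lefschetz on $H^1(Y^{(1)})$ alone does not give this. Over $\CC$ the restriction stays non-degenerate by the Hodge--Riemann relations, but in the log crystalline case --- the only case where~(\ref{thmi:log-HL-H1}) is new --- no such positivity is available. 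The paper substitutes the ``motivic'' Lemma~\ref{lemma:polarization} (Kleiman's argument): $L^{n-1}$ on $H^1$ is induced by an isogeny $A^\vee\to A$ of Albanese/Picard varieties, so $\ell_1$ is $H^1$ of an isogeny $C^\vee\to C$, hence an isomorphism. In short, for $q=1$ the required positivity is indeed unconditional, but it is not elementary: it consists exactly of the Hodge index theorem for divisors plus the theory of polarizations of abelian varieties, and organizing the proof around those two theorems is what Section~\ref{s:logHL-H1} does.
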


Result (\ref{thmi:log-HL-H1}) has been obtained previously by T.\ Kajiwara \cite[p.\ 167]{Nakkajima}, though the proof has never appeared. The proofs rely on the arguments of M.\ Saito \cite{SaitoMHP,SaitoPositivity}, Rapoport--Zink \cite{RapoportZink}, and T.\ Ito \cite{Ito} in the context of the weight-monodromy conjecture/theorem. 

The proof of Theorem~\ref{thm:hansen-li} relied on the earlier work of Li \cite{Li}, ingeniously bringing the moduli spaces of semistable sheaves into the argument. Our proof of Hodge symmetry for $H^1$, though slightly weaker (we need a semistable model, and it is not clear if one can use alterations to reduce to this case), completely circumvents this aspect of their proof, and is more ``motivic'' in a sense.

The above results motivate the following question, a non-archimedean analog of the search for restrictions on the homotopy type of a K\"ahler manifold. In \cite{AchingerTalpo}, the author together with M.\ Talpo have constructed a homotopy type $\Psi_{\rm rig}(X)$ associated to a smooth rigid analytic space $X$ over $\CC(\!(t)\!)$. What are the restrictions on $\Psi_{\rm rig}(X)$ for $X$ with projective reduction? (E.g.\ Theorem~\ref{thmi:HS}(\ref{thmi:HS-char0}) shows that the odd degree cohomology has even dimension.)

The paper is organized as follows. Sections \ref{s:logHL}--\ref{s:logHL-comb} are devoted to log hard Lefschetz: in \S\ref{s:logHL}, we formulate the conjecture and review the closely related weight spectral sequence, in \S\ref{s:logHL-H1}, we prove it for $H^1$, and in \S\ref{s:logHL-comb} for log varieties of combinatorial type. Then, in \S\ref{s:HS-char-0}, we prove Hodge symmetry for rigid spaces with projective reduction in equal characteristic zero. In the final \S\ref{s:p-adic}, we deal with the $p$-adic case.

\subsection*{Acknowledgements} 

The author is grateful to Shizhang Li, Sasha Petrov for many useful discussions and to Alex Youcis and Maciek Zdanowicz for comments on the manuscript. The overall strategy in this paper and some of its results were also independently observed by Petrov.

This work is a part of the project KAPIBARA supported by the funding from the European Research Council (ERC)
under the European Union’s Horizon 2020 research and innovation programme (grant agreement No 802787).


\section{Log hard Lefschetz and the weight spectral sequence}
\label{s:logHL}

Until the end of this paper, we fix a perfect field $k$. 

\subsection{Strictly semistable log schemes}
\label{ss:semistable-log}

We denote by $s = \Spec (k\to \NN)$ the standard log point over $k$. We use the notation $\underline Y$ for the underlying scheme of a log scheme $Y$. We refer to Ogus' book \cite{Ogus} for notation and terminology regarding log schemes.

\begin{defin} \label{def:strictly-semistable}
  A \emph{strictly semistable log scheme} over $k$ is a log scheme $Y$ over $s$ locally admitting a chart of the form $1\mapsto (1, \ldots, 1)\colon \NN\to \NN^r$ and such that the irreducible components of $\underline{Y}$ are smooth over $k$.
\end{defin}

In other words, the underlying scheme $\underline{Y}$ Zariski locally admits an \'etale morphism $\underline{Y}\to \Spec k[x_1, \ldots, x_r]/(x_1 \cdot\ldots \cdot x_r)$ and the log structure of $Y$ is obtained by pull-back from the log structure on the standard semistable scheme $\Spec k[x_1, \ldots, x_r, t]/(x_1 \cdot\ldots\cdot x_r -t)$ induced by the open subset $\{t\neq 0\}$. Thus, if $\mathfrak{X}$ is a strictly semistable (formal) scheme over a complete discrete valuation ring $\cO$ with residue field $k$, then the special fiber $Y = \mathfrak{X}_k$ endowed with the natural log structure is a strictly semistable log scheme.   Such log schemes are also called SNCL varieties in \cite{Nakkajima}. See also \cite[III 1.8]{Ogus}.

In the following, we will denote the irreducible components of $\underline{Y}$ by $Y_1, \ldots, Y_s$ and for $I\subseteq \{ 1, \ldots, s\}$, we write $Y_I = \bigcap_{i\in I} Y_i$ ($Y_\emptyset = \underline{Y}$ by convention). Note that if $\underline{Y}$ is everywhere of dimension $n$, then each $Y_I$ with $I\neq \emptyset$ is either empty or everywhere of dimension $n + 1 - |I|$. Finally, for $k\geq 1$ we denote by $Y^{(k)}$ the disjoint union of all $Y_I$ with $|I|=k$. The natural map $a_k\colon Y^{(k)}\to \underline{Y}$ identifies $Y^{(k)}$ with the normalization of $\bigcup_{|I|=k} Y_I$. The schemes $Y^{(\bullet+1)}$ naturally form a semi-simplicial scheme, and the maps $a_{\bullet + 1}$ provide an augmentation to $\underline Y$.

\subsection{Logarithmic cohomology theories}
\label{ss:log-cohomology}

We now briefly introduce the three cohomology theories for proper and strictly semistable log schemes over $k$: log Betti, log crystalline, and log $\ell$-adic cohomology. We refer e.g.\ to \cite[I \S 2]{Nakkajima} for details. 

Over $k=\CC$, one has the Kato--Nakayama space $Y_{\rm log}$ \cite[V]{Ogus}, functorially attached to any fs log scheme over $k$. Moreover, we have $s_{\rm log} \simeq S^1$. For $Y$ strictly semistable over $k$, we define $\tilde Y_{\rm log}$ to be the pull-back of $Y_{\rm log}\to s_{\rm log}$ along the universal covering $\exp(2\pi i z)\colon \RR\to S^1$. The log Betti cohomology groups $H^*(\tilde Y_{\rm log}, \QQ)$ are endowed with a~unipotent action of $\pi_1(S^1)\simeq \ZZ$ and enjoy the familiar properties of Betti cohomology, for example admit a natural isomorphism $H^*(\tilde Y_{\rm log}, \QQ) \otimes \CC\simeq H^*(Y, \Omega^\bullet_{Y/s})$. 

Suppose that $k$ has characteristic $p>0$. Hyodo and Kato \cite{HyodoKato} associate to $Y$ the log crystalline cohomology groups $H^i_{\text{log-cris}}(Y/W(k))$, endowed with the crystalline Frobenius $\varphi$ and the monodromy operator $N$ satisfying $N\varphi = p\varphi N$. We will mostly work with the associated $(\varphi, N)$-module $H^i_{\text{log-cris}}(Y/W(k))[1/p]$ over $K_0 = W(k)[1/p]$. 

Finally, in any characteristic one has the $\ell$-adic cohomology groups $H^i_{\text{log-\'et}}(Y_{\bar s}, \QQ_\ell)$ (with $\ell$ an auxiliary prime invertible in $k$). We will not use them in what follows.

If $\underline{Y}$ is smooth, each of these cohomology theories agrees with the corresponding classical cohomology (Betti, crystalline, $\ell$-adic) of $\underline{Y}$. If $F=H^0(Y)$ is the coefficient field, then $F(1):=H^2(\PP^1)^\vee$ is one-dimensional over $F$, fixing a basis of this vector space once and for all will allow us to ignore Tate twists when we do not need them. If $\mathcal{L}$ is a line bundle on $\underline{Y}$, then in each of the above cohomology theories there is a naturally defined first Chern class $c_1(\mathcal{L})\in H^2(Y)(1)$. 

\subsection{Line bundles and log hard Lefschetz}
\label{ss:log-HL-statement}

Pick as $H^*$ one of the cohomology theories in \S\ref{ss:log-cohomology}. Let $Y$ be a proper strictly semistable log scheme over $k$ and let $\mathcal{L}$ be a line bundle on $\underline{Y}$, and let $c_1(\mathcal{L}) \in H^2(Y)(1)$ be its first Chern class. Suppose that $\underline{Y}$ is proper and geometrically connected of dimension $n$. For $0\leq r\leq n$, cup product with $c_1(\mathcal{L})^r$ defines a map
\begin{equation} \label{eqn:loghl} 
  L^r \colon H^{n-r}(Y) \to H^{n+r}(Y)(r). 
\end{equation}

\begin{conj}[{Log hard Lefschetz conjecture}] \label{conj:log-HL}
  If $\mathcal{L}$ is ample, then \eqref{eqn:loghl} is an isomorphism for all $0\leq r\leq n$.
\end{conj}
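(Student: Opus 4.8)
The plan is to reduce the conjecture to \emph{classical} hard Lefschetz on the smooth proper strata $Y^{(k)}$ by means of the weight (monodromy) spectral sequence, and then to control the passage from the associated graded back to the abutment $H^*(Y)$ using the interplay of the Lefschetz operator $L$ with the monodromy operator $N$. First I would invoke the weight spectral sequence of \S\ref{s:logHL}, of the schematic shape
\[
  E_1^{p,q} \;=\; \bigoplus_{i\ge \max(0,-p)} H^{q-2i}\big(Y^{(2i+p+1)}\big)(-i) \;\Longrightarrow\; H^{p+q}(Y),
\]
whose $E_1$-terms are Tate-twisted cohomologies of the smooth proper strata and whose differentials are assembled from Gysin and restriction maps. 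The monodromy operator $N$ acts on this complex shifting weights by $-2$, and the Lefschetz operator $L=\cup\,c_1(\mathcal L)$ acts compatibly, inducing on each summand cup product with $c_1(\mathcal L|_{Y^{(k)}})$. Since each $Y^{(k)}$ is smooth and proper and $\mathcal L$ restricts to an ample bundle on it, classical hard Lefschetz holds stratum by stratum, so $L$ already satisfies hard Lefschetz on the $E_1$-page; granting degeneration at $E_2$ (known in each of the three theories by weight/purity arguments), it follows that $L^r$ is an isomorphism on the associated graded $\mathrm{gr}^W_\bullet H^*(Y)$.

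The decisive step is to lift hard Lefschetz from $\mathrm{gr}^W$ to the filtered abutment $H^*(Y)$, which is false for a bare filtered vector space and must be forced by extra structure. Following M.\ Saito's treatment of the monodromy filtration, I would organize $L$ together with the $\mathfrak{sl}_2$-triple attached to $N$ into a \emph{commuting pair} of $\mathfrak{sl}_2$-actions on the $E_1$-complex: the operator $N$ supplies hard Lefschetz in the ``monodromy direction,'' namely the bijectivity $N^k\colon \mathrm{gr}^W_{m+k}H^m(Y)\isomto \mathrm{gr}^W_{m-k}H^m(Y)$, while $L$ supplies it in the ``geometric direction.'' A simultaneous primitive decomposition for the resulting $\mathfrak{sl}_2\times\mathfrak{sl}_2$-module then propagates the graded statement to the total cohomology, yielding the desired isomorphism $L^r\colon H^{n-r}(Y)\isomto H^{n+r}(Y)(r)$.

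The main obstacle is precisely the input needed to make this $\mathfrak{sl}_2\times\mathfrak{sl}_2$ package run honestly rather than at the level of mere dimension counts. Two ingredients are required: the weight-monodromy property (bijectivity of the $N^k$ above), and a positivity/polarization guaranteeing that the two commuting actions assemble into a \emph{semisimple, polarizable} Lefschetz module. Over $\CC$ both are furnished by Hodge theory, which is how Nakkajima's theorem is obtained; but in characteristic $p$ the first ingredient for $H^*_{\text{log-cris}}$ is the crystalline weight-monodromy conjecture, open in general (cf.\ the work of Rapoport--Zink and Ito), and the second amounts to the Hodge standard conjecture for the strata. This is where a proof of the full conjecture stalls, and it dictates the retreat to special cases.

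Accordingly, when the cohomology of every $Y_I$ is spanned by algebraic cycles and the Hodge standard conjecture holds for these intersections, both difficulties dissolve: the $\mathfrak{sl}_2\times\mathfrak{sl}_2$-module becomes explicit, the relevant pairings are genuinely positive-definite on primitive algebraic classes, and the lifting argument goes through verbatim---this is how the combinatorial case is reached. The case $q=n-r=1$ is simpler still: here only the lowest strata $Y^{(1)}$ and $Y^{(2)}$ enter, the weight filtration on $H^1(Y)$ has at most two steps, and the lifting from graded to abutment can be verified directly without the full positivity machinery.
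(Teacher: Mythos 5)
First, framing: what you were asked about is Kato's \emph{conjecture}, and the paper does not prove it --- it is open in general. The paper establishes only special cases: over $\CC$ by quoting Nakkajima (Theorem~\ref{thm:log-HL-char0}), for $H^1$ (Theorem~\ref{thm:log-HL-H1}), and under the ``combinatorial'' hypotheses (A), (B) (Corollary~\ref{cor:log-HL-comb}). Since your proposal ends by conceding that the general case stalls on weight--monodromy and positivity and retreats to exactly these cases, its overall shape matches the paper's. But there is a genuine error in the core of your argument, and it mislocates where the difficulty actually lives.

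You claim that, since $\mathcal{L}$ restricts to an ample bundle on each smooth proper stratum, classical hard Lefschetz gives isomorphisms on the $E_1$-page, and that ``granting degeneration at $E_2$ \dots{} it follows that $L^r$ is an isomorphism on the associated graded $\mathrm{gr}^W_\bullet H^*(Y)$.'' This inference is false. The operator $L^r$ is a map of complexes $(E_1^{\bullet,b},d_1)\to(E_1^{\bullet,b+2r},d_1)$ which is an isomorphism only in the single column $a$ with $a+b=n-r$: there each summand $H^{2(a-k)+b}(Y^{(2k-a+1)})$ sits exactly $r$ below the middle degree of its stratum (the offset $a+b-n$ is independent of $k$), so classical hard Lefschetz applies termwise. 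In every other column the offset is wrong and $L^r$ is not an isomorphism, so nothing forces it to induce an isomorphism on $d_1$-cohomology, i.e.\ on $E_2$. Descending hard Lefschetz from $E_1$ to $E_2$ is precisely the hard step, and it is what Saito's theorem on polarized Hodge--Lefschetz modules (Theorem~\ref{thm:HV-is-Ito}, applied via Proposition~\ref{prop:V-is-ito}) accomplishes --- but only in the presence of the positivity input (B), which is the Hodge standard conjecture and hence unavailable in general. Conversely, the step you single out as ``decisive,'' lifting from $\mathrm{gr}^W$ to the abutment, is the trivial one: a filtered map of finite-dimensional filtered vector spaces inducing isomorphisms on all graded pieces is an isomorphism, which is exactly why the paper records (\S\ref{ss:weight-ss}) that, thanks to $E_2$-degeneration, log hard Lefschetz is \emph{equivalent} to the $E_2$-page statement \eqref{eqn:logHL-Weil}. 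No $\mathfrak{sl}_2\times\mathfrak{sl}_2$ machinery is needed at that stage; where Saito's $\mathfrak{sl}_2\times\mathfrak{sl}_2$ package genuinely enters is in the passage $E_1\Rightarrow E_2$ that you treated as automatic.

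One more correction to your last paragraph: the $H^1$ case in the paper is not a positivity-free ``direct verification.'' It uses weight--monodromy for $H^1$ (Proposition~\ref{prop:WM-H1}), whose proof rests on the Hodge index theorem and positivity of intersection numbers (Lemmas~\ref{lemma:ito-l5.5-H0} and~\ref{lemma:ito-l5.5}), together with a motivic identification of $L^{n-1}$ on $H^1$ with an isogeny between the Picard and Albanese varieties (Lemma~\ref{lemma:polarization}); also $Y^{(3)}$ does enter, through the differentials defining the relevant $E_2$-terms. The correct statement is that for $H^1$ the needed positivity (the Hodge standard conjecture for divisors and for $H^0$) is unconditionally known --- not that it is unnecessary.
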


We say that the log hard Lefschetz holds for $\mathcal{L}$ \emph{in degree $q$} if \eqref{eqn:loghl} is an isomorphism with $q=n-r$. 

\subsection{The weight spectral sequence}
\label{ss:weight-ss}

Let $k$ be an algebraically closed field and let $H^*$ be one of the cohomology theories listed in \S\ref{ss:log-cohomology}. The restriction of $H^*$ to smooth proper schemes is then a Weil cohomology theory satisfying the hard Lefschetz theorem. 

The weight spectral sequence expresses the logarithmic cohomology $H^*(Y)$ of a proper strictly semistable log scheme $Y$ over $k$ in terms of the ``classical'' cohomology $H^*(Y^{(k)})$. It has been constructed for Betti cohomology (over $k=\CC$) by Steenbrink \cite{Steenbrink,SteenbrinkLog}, for $\ell$-adic \'etale cohomology by Rapoport and Zink \cite{RapoportZink}, and for crystalline cohomology by Mokrane \cite{Mokrane} (see also \cite{Nakkajima}). It has the form 
\[ 
  E^{a,b}_1 = \bigoplus_{k\geq \max(a, 0)} H^{2(a-k)+b}(Y^{(2k-a+1)})
  \quad \Rightarrow \quad
  H^{a+b}(Y), 
\]
where the differentials $d_1^{a,b} \colon E^{a,b}_1 \to E^{a+1, b}_1$ are as in the paragraph below. The spectral sequence degenerates at the $E_2$-page. 

To describe the differentials $d_1^{a,b}$, we shall consider the maps
\[ 
  \rho \colon H^i(Y^{(k)}) \to H^i(Y^{(k+1)})
\] 
induced by the alternating sum of the inclusion maps and
\[ 
  \tau \colon H^i(Y^{(k)}) \to H^{i+2}(Y^{(k-1)})
\]
induced by the Gysin homomorphisms, cf.\ \cite[\S 5]{Ito}. These induce maps 
\[
  \rho \colon E^{a,b}_1\to E^{a+1, b}_1 \quad \text{(increasing the index $k$ by $1$)}
\]
\[
  \tau\colon E^{a,b}_1 \to E^{a+1, b}_1 \quad  \text{(preserving the degree $k$).}
\]
Then $d^{a,b}_1 = \rho + \tau$. We shall need the following relations: $\rho^2 = 0$, $\tau^2 = 0$, $\rho \tau + \tau\rho = 0$.

We denote by $N$ the natural maps $N\colon E_1^{a,b} \to E_1^{a+2, b-2}$ increasing the index $k$ by $2$, which are either zero or identity on the summands. They commute with the operators $\rho$, $\tau$, and $d_1$, and hence induce maps $N\colon E_2^{a,b}\to E_2^{a+2, b-2}$. The \emph{weight-monodromy conjecture} for $Y$ is the statement that the maps
\[ 
  N^r \colon E_2^{-r, w+r} \to E_2^{r,w-r}
\] 
are isomorphisms. It is false for general $Y$ \cite[\S 6]{Nakkajima}, but it is expected to hold if $\underline{Y}$ is projective or if $Y$ is the special fiber of a proper semistable scheme over a complete discrete valuation ring. We shall prove, in Proposition~\ref{prop:WM-H1} below, that if $\underline{Y}$ is projective, then the above statement holds with $r=w=1$. 

We fix an ample line bundle $\mathcal{L}$ on $\underline{Y}$ and denote by $L\colon H^*(Y^{(k)}) \to H^{*+2}(Y^{(k)})$ the Lefschetz operators, i.e.\ the cup product with $c_1(\mathcal{L})$. They commute with the maps $\rho$, $\tau$, and $N$ and hence induce a degree $(0,2)$ map $E_*^{a,b}\to E_*^{a, b+2}$ which commutes with $N$. Thanks to the $E_2$-degeneration, the log hard Lefschetz conjecture (Conjecture~\ref{conj:log-HL}) holds for $(Y, \mathcal{L})$ if and only if the maps
\begin{equation} \label{eqn:logHL-Weil} 
  L^r \colon E_2^{n-b-r,b} \to E_2^{n-b+r,b}
\end{equation}
are isomorphisms. 

The hard Lefschetz theorem for $H^*$ restricted to smooth projective schemes implies that the pairing
\[ 
  \langle, \rangle \colon H^q(X^{(k)}) \times H^q(X^{(k)}) \to F,
  \quad
  \langle x,  y\rangle = L^{n+1-k-q} x\cdot y
\]
is non-degenerate. Further, the maps $\rho$ and $\tau$ are adjoint in the following sense. For $x\in H^q(X^{(k)})$, $y\in H^{q'}(X^{(k-1)})$, $q+q'=2s$:
\[ 
  L^{n-k+1-s} x \cdot \rho y = \pm L^{n-k+1-s} \tau x \cdot y,
\]
and similarly for $x\in H^q(X^{(k)})$, $y\in H^{q'}(X^{(k+1)})$, $q+q'=2s$:
\[ 
  L^{n-k-s} x \cdot \tau y = \pm L^{n-k-s} \rho x \cdot y,
\]
see \cite[(5.1)]{Ito}. We will frequently make use of these relations.

Using the Hodge-theoretic argument \cite[4.2.2]{SaitoPositivity}, Nakkajima has obtained:

\begin{thm}[{\cite[Theorem~9.14]{Nakkajima}}] \label{thm:log-HL-char0}
  The log hard Lefschetz conjecture holds if $k=\CC$ for each of the cohomology theories listed in \S\ref{ss:log-cohomology}.
\end{thm}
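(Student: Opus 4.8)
The plan is to work throughout with log Betti cohomology $H^*(\tilde Y_{\rm log},\QQ)$ and to recast the log hard Lefschetz statement \eqref{eqn:loghl} as a property of the $E_2$-page of the weight spectral sequence, where it becomes an instance of the principle that the cohomology of a \emph{polarized} differential bigraded Lefschetz structure is again such a structure. Since the weight spectral sequence degenerates at $E_2$, by \eqref{eqn:logHL-Weil} it suffices to show that cup product with $c_1(\mathcal{L})$ induces the asserted isomorphisms $L^r$ between the appropriate $E_2$-terms. The two operators in play—$L$ (cup product with $c_1(\mathcal{L})$, preserving the index $k$) and $N$ (the monodromy, raising $k$ by $2$)—commute, and on the $E_1$-page every summand $H^*(Y^{(k)})$ is the cohomology of a smooth projective variety, for which classical hard Lefschetz and the Hodge--Riemann bilinear relations relative to $\mathcal{L}$ are available.

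First I would endow the $E_1$-page with the structure of a polarized bigraded Hodge--Lefschetz module: the bigrading is $(a,b)$, the two commuting Lefschetz operators are $L$ and $N$, and the differential $d_1 = \rho + \tau$ has the required bidegree and commutes with both $L$ and $N$ (using $\rho^2 = \tau^2 = \rho\tau + \tau\rho = 0$). On the $E_1$-page $N$ is a Lefschetz operator for purely combinatorial reasons, being the identity or zero on summands, while $L$ is a Lefschetz operator by classical hard Lefschetz applied uniformly to the smooth projective $Y^{(k)}$. The polarizations are the Hodge--Riemann forms on each $Y^{(k)}$, suitably Tate-twisted; the compatibility that must be checked is exactly the adjointness of $\rho$ and $\tau$ against these forms, which is the content of the relations \cite[(5.1)]{Ito} recorded above. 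The Hodge-theoretic input that makes this a genuinely polarized—and not merely formal—structure, supplying the signs and the \emph{positive-definiteness} of the induced forms on the primitive components, is Saito's positivity result \cite[4.2.2]{SaitoPositivity}.

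I would then invoke the Deligne--Saito lemma: the cohomology of a polarized differential bigraded Hodge--Lefschetz module, taken with respect to a differential commuting with the two Lefschetz operators, is again a polarized bigraded Lefschetz module. Applying it to $(E_1, d_1, L, N)$ shows that $E_2 = H(E_1, d_1)$ satisfies hard Lefschetz simultaneously for $N$ and for $L$. The $N$-half reproduces the weight--monodromy isomorphisms, and the $L$-half is precisely the system \eqref{eqn:logHL-Weil}, which by $E_2$-degeneration is log hard Lefschetz for log Betti cohomology. For the remaining theory over $\CC$, namely log $\ell$-adic cohomology (log crystalline does not arise, as $k=\CC$ has residue characteristic zero), I would transport the result along the comparison isomorphism with log Betti cohomology, which is compatible with $L$, with $N$, and with the weight spectral sequence, so that log hard Lefschetz for one theory forces it for the other.

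The step I expect to be the main obstacle is verifying that $(E_1, d_1, L, N)$ is honestly a polarized Hodge--Lefschetz module and that it meets the hypotheses of the abstract lemma: the bookkeeping of Tate twists, of signs, and of the adjointness of $\rho$ and $\tau$ against the polarizations must be exactly right, and it is the \emph{positive-definiteness} of the forms on the primitive parts that drives the proof of the lemma. This positivity is also precisely what is unavailable over a finite field, where the Hodge--Riemann forms are merely non-degenerate; that is why the same formal skeleton does not (falsely) yield log hard Lefschetz in the crystalline or $\ell$-adic settings in positive characteristic, and why the full strength of the Hodge-theoretic machinery of \cite{SaitoMHP,SaitoPositivity} is needed here.
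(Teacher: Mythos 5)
Your proposal is correct and is essentially the argument the paper relies on: the paper proves this theorem by citation to Nakkajima's Theorem~9.14, whose proof is exactly your skeleton---endow the $E_1$-page of the weight spectral sequence with the structure of a polarized differential bigraded Hodge--Lefschetz module (polarized via the classical Hodge--Riemann relations on the smooth projective $Y^{(k)}$, with $\rho$, $\tau$ adjointness as in Ito's (5.1)), apply M.~Saito's lemma \cite[4.2.2]{SaitoMHP} that cohomology of such a module is again one, and conclude via $E_2$-degeneration and \eqref{eqn:logHL-Weil}. Indeed, the paper's own \S\ref{s:logHL-comb} (``Ito modules'') implements precisely this formal structure in the algebraic-cycle setting, confirming your reading; the only quibble is that the positivity on $E_1$ comes from the classical Hodge--Riemann bilinear relations, while Saito's result is the abstract preservation lemma, not the source of the polarization itself.
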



\section{\texorpdfstring{Log hard Lefschetz and weight-monodromy for $H^1$}{Log hard Lefschetz and weight-monodromy for H1}}
\label{s:logHL-H1}

In this section, we shall prove the log hard Lefschetz theorem for $H^1$. For this, we will need to show that the weight--monodromy conjecture holds for $H^1$. Both of these results have been obtained earlier by T.\ Kajiwara (unpublished, cf.\ \cite[p.\ 167]{Nakkajima}). We let $H^*$ be one of the cohomology theories listed in \S\ref{ss:log-cohomology}.

The proof of the case of weight--monodromy is a rather straightforward adaptation of the arguments of Ito \cite[\S 5]{Ito}, which are themselves based on \cite[\S 4]{SaitoMHP}; we can do this without the strong assumptions in \emph{op.\ cit.} because as a consequence of the Hodge index theorem, the Hodge standard conjecture holds for divisors. 

For the log hard Lefschetz, we will also need the fact that the Lefschetz operator $L^{n-1}\colon H^1\to H^{2n-1}$ on the cohomology of a smooth projective variety of dimension $n$ is ``motivic,'' i.e.\ induced by a polarization of the Albanese variety. This follows from the argument given by Kleiman \cite[2A9.5]{Kleiman}.

In the following, we fix a strictly semistable log scheme $Y$ over $k$, which is purely of dimension $n$, and an ample line bundle $\mathcal{L}$ on $\underline{Y}$. We continue using the notation of \S\ref{ss:weight-ss}, in particular we use heavily the pairing $\langle , \rangle$ on the cohomology groups $H^*(Y^{(k)})$ induced by $\mathcal{L}$ and the maps $\rho$ and $\tau$. We denote by $P^q(Y^{(k)})$ the \emph{primitive cohomology} of $Y^{(k)}$, i.e.\ the kernel of $L^{n-k+1-q+1}\colon H^q(Y^{(k)})\to H^{2(n-k+1)-q+2}(Y^{(k)})$. 

\begin{lemma} \label{lemma:ito-l5.5-H0}
The restriction of $\langle , \rangle$ on $H^0 (Y^{(k)})$ to $\im(\rho\colon H^0(Y^{(k-1)})\to H^0(Y^{(k)}))$ and to $\ker(\rho\colon H^0(Y^{(k)})\to H^0(Y^{(k+1)})$ are non-degenerate for all $k$.
\end{lemma}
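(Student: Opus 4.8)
The plan is to reduce the assertion to a positivity statement over $\QQ$, where it becomes essentially formal; the whole point is that for $H^0$ the relevant pairing is \emph{definite}, so its restriction to any subspace is automatically non-degenerate. Write $d = n+1-k$ for the dimension of $Y^{(k)}$, so that on $H^0(Y^{(k)})$ the pairing is $\langle x, y\rangle = L^d x \cdot y$. I would first record that $H^0(Y^{(k)})$ carries a canonical $\QQ$-structure $V_k \subseteq H^0(Y^{(k)})$, namely the $\QQ$-span of the fundamental classes $1_Z$ of the connected components $Z$ of the various $Y_I$ with $|I| = k$: in each of the three cohomology theories one has $H^0(Z) = F\cdot 1_Z$ for $Z$ connected, so $V_k \otimes_\QQ F = H^0(Y^{(k)})$. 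Since the restriction maps $H^0(Y_I)\to H^0(Y_J)$ carry units to units, the operator $\rho$ is defined over $\ZZ$ on the basis $\{1_Z\}$, and is described by the same integral matrix in all three theories. Because field extension is flat, $\im(\rho)$ and $\ker(\rho)$ inside $H^0(Y^{(k)})$ are the base changes to $F$ of the $\QQ$-subspaces $\im(\rho|_{V_{k-1}})$ and $\ker(\rho|_{V_k})$.

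First I would compute the pairing on this $\QQ$-structure. If $x$ is supported on a component $Z$ and $y$ on $Z'\neq Z$, then $L^d x \cdot y = 0$, while for $Z=Z'$ ampleness of $\mathcal{L}$ gives
\[
  \langle 1_Z, 1_Z\rangle = \deg_Z\bigl(c_1(\mathcal{L}|_Z)^{\dim Z}\bigr) = (\mathcal{L}|_Z)^{\dim Z} > 0.
\]
Thus in the basis $\{1_Z\}$ the Gram matrix of the symmetric form $\langle,\rangle$ is diagonal with positive integer entries; equivalently, $\langle,\rangle$ restricts to a positive-definite form on $V_k\otimes_\QQ \RR$. This is the $H^0$-incarnation of the Hodge index statement that Ito and Saito invoke in higher degrees, here trivialized by positivity of intersection numbers of an ample class.

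Then I would finish purely formally. The restriction of a positive-definite form to any subspace of $V_k\otimes_\QQ\RR$ is again positive-definite, hence non-degenerate; applying this to the real spans of $\im(\rho|_{V_{k-1}})$ and $\ker(\rho|_{V_k})$ shows that $\langle,\rangle$ is non-degenerate on these $\QQ$-subspaces, i.e.\ their Gram determinants are nonzero rationals. Such determinants are unchanged under extension of scalars from $\QQ$ to $F$, so the restrictions of $\langle,\rangle$ to $\im(\rho)$ and to $\ker(\rho)$ in $H^0(Y^{(k)})$ remain non-degenerate, for every $k$.

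The one point requiring genuine care — and the reason one cannot argue directly over $F$ — is that the coefficient field $F$ (namely $\QQ_\ell$ in the $\ell$-adic case and $K_0 = W(k)[1/p]$ in the crystalline case) is not orderable, so that ``positive-definite'' is meaningless over $F$ and the restriction of a non-degenerate $F$-bilinear form to a subspace may well be degenerate. The hard part is therefore not any computation but the correct \emph{descent} to the canonical $\QQ$-structure, on which ampleness supplies true positivity; this is precisely what makes $H^0$ easier than the higher-degree statements of Ito, where the analogous definiteness must be extracted from the Hodge standard conjecture.
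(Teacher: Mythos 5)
Your proof is correct and follows essentially the same route as the paper: descend to the $\QQ$-structure spanned by the component classes, observe that ampleness makes the Gram matrix diagonal with positive entries $(\mathcal{L}|_Z)^{\dim Z}$, conclude positive-definiteness over $\QQ$, and base change the resulting non-degeneracy on subspaces back to $F$. Your extra remarks on the Gram-determinant argument and on why one cannot argue directly over $\QQ_\ell$ or $K_0$ merely make explicit what the paper leaves implicit.
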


\begin{proof}
The commutative diagram
\[ 
  \xymatrix{
    H^0(Y^{(k-1)}, \QQ) \ar[r]^-{\rho_\QQ} \ar[d] & H^0(Y^{(k)}, \QQ) \ar[r]^-{\rho_{\QQ}} \ar[d] & H^0(Y^{(k+1)}, \QQ) \ar[d] \\
    H^0(Y^{(k-1)}) \ar[r]^-\rho & H^0(Y^{(k)}) \ar[r]^-\rho & H^0(Y^{(k+1)}) 
       }
\]
provides natural $\QQ$-structures for the $F$-vector spaces in the bottom row. It therefore suffices to show that the induced pairing on $H^0(Y^{(k)}, \QQ)$ is non-degenerate when restricted to $\ker(\rho_\QQ)$ and $\im(\rho_{\QQ})$. But, if $Z$ is a connected component of $Y^{(k)}$, and $\chi_Z\in H^0(Y^{(k)}, \QQ)$ is its characteristic function, then $\langle \chi_Z, \chi_Z \rangle$ equals the top intersection number $c_1(\mathcal{L}|_Z)^{n+1-k}$, which is positive since $\mathcal{L}$ is ample. Since such elements $\chi_Z$ provide a basis of $H^0(Y^{(k)}, \QQ)$, the pairing $\langle, \rangle$ on this space is thus positive definite. Therefore its restriction to any subspace remains non-degenerate.
\end{proof}

\begin{lemma}[{cf.\ \cite[Lemma~5.5]{Ito}}] \label{lemma:ito-l5.5}
The restriction of $\langle, \rangle$ on $H^2(Y^{(1)})$ to 
\[
  \im\left(\tau\colon H^0(Y^{(2)})\to H^2(Y^{(1)})\right)\cap P^2(Y^{(1)})
\]
is non-degenerate.
\end{lemma}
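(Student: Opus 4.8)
The plan is to prove the stronger statement that $\langle,\rangle$ is definite (of sign $(-1)^1 = -1$) on the subspace $V = \im(\tau)\cap P^2(Y^{(1)})$; non-degeneracy then follows immediately, since a definite form restricts non-degenerately to any subspace. As in the proof of Lemma~\ref{lemma:ito-l5.5-H0}, the first step is to exhibit a $\QQ$-structure. The Gysin map $\tau$ is already defined on $H^0(Y^{(2)},\QQ)$, and the ample class $c_1(\mathcal{L})$ admits an evident $\QQ$-rational refinement; thus $V$ descends to a $\QQ$-subspace $V_\QQ$ with $V = V_\QQ\otimes_\QQ F$, and the pairing $\langle x,y\rangle = L^{n-2}x\cdot y$ takes $\QQ$-values on $V_\QQ$, being a sum of intersection numbers of algebraic cycles. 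It therefore suffices to show that $\langle,\rangle$ is definite on $V_\QQ\otimes_\QQ\RR$, a statement independent of the chosen cohomology theory $H^*$.

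Writing $\underline Y = \bigcup_i Y_i$, the decomposition $H^2(Y^{(1)}) = \bigoplus_i H^2(Y_i)$ is respected by $L$ and by $\langle,\rangle$, and the Lefschetz decomposition gives $P^2(Y^{(1)}) = \bigoplus_i P^2(Y_i)$. The connected components of $Y^{(2)}$ are the double intersections $Y_{ij}$, each a smooth divisor in $Y_i$; the Gysin map $\tau$ sends the characteristic function of $Y_{ij}$ to a signed combination of the divisor classes $[Y_{ij}]$ in $H^2(Y_i)$ and $H^2(Y_j)$. Hence every element of $\im(\tau)$, and in particular of $V_\QQ$, is a $\QQ$-linear combination of algebraic divisor classes lying in $\bigoplus_i P^2(Y_i)$. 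On each $Y_i$ the Hodge standard conjecture for divisors holds, being equivalent to the Hodge index theorem: the form $x\mapsto -L^{n-2}x\cdot x$ is positive definite on the $\RR$-span of primitive algebraic divisor classes. Summing over $i$, the form $\langle,\rangle$ is negative definite on $V_\QQ\otimes_\QQ\RR$, hence non-degenerate on $V$.

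The main obstacle is the descent to a $\QQ$-structure carrying a $\QQ$-valued, cohomology-theory-independent intersection form: one must verify that both $\im(\tau)$ and the primitivity condition $L^{n-1}(-)=0$ are defined over $\QQ$ on the algebraic classes, so that the Hodge index theorem---an assertion about rational intersection numbers of divisors---applies uniformly to the Betti, $\ell$-adic, and crystalline theories. This is exactly the point where the hypotheses of \cite[Lemma~5.5]{Ito} can be dropped: we never need the full Hodge standard conjecture, only its case for divisors. The signs introduced by the semi-simplicial Gysin maps must be tracked but are immaterial, as they alter neither the $\QQ$-span of $\im(\tau)$ nor the definiteness of the resulting form.
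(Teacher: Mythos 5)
Your overall strategy---descend to a $\QQ$-structure of algebraic classes, apply the Hodge index theorem (the divisor case of the Hodge standard conjecture) to get definiteness of a $\QQ$-valued form, and conclude non-degeneracy of its $F$-linear extension via the invertibility of the Gram matrix---is exactly the strategy of the paper's proof. The problem is that your argument has a gap at precisely the point where the paper does its actual work. The word ``thus'' in ``thus $V$ descends to a $\QQ$-subspace $V_\QQ$ with $V = V_\QQ\otimes_\QQ F$'' is unjustified: the fact that $\tau$ and $L$ are defined on cycle-level $\QQ$-objects does not imply that the image of the rational classes is a $\QQ$-structure, nor that the form on it is negative definite. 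Concretely, your appeal to the Hodge index theorem gives definiteness on the primitive part of ${\rm NS}(Y^{(1)})\otimes\QQ$, but what you need is definiteness on the \emph{image} of that space inside $H^2(Y^{(1)})$; a definite form does not descend along a map with kernel, so you must know that the cycle class map ${\rm NS}(Y^{(1)})\otimes F \to H^2(Y^{(1)})$ is injective---equivalently, that divisor classes which are $\QQ$-independent modulo numerical equivalence stay $F$-independent in the chosen Weil cohomology. This is not a formality (for cycles of higher codimension the analogous statement, homological versus numerical equivalence, is a well-known open problem); in the paper it is itself deduced from the Hodge index theorem, using that the intersection pairing on ${\rm NS}(Y^{(1)})\otimes\QQ$ is non-degenerate and compatible with the cohomological pairing, so the kernel of ${\rm NS}(Y^{(1)})\otimes F\to H^2(Y^{(1)})$ must vanish. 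You explicitly flag this descent as ``the main obstacle'' that ``one must verify,'' but the verification never appears---and it is the crux of the lemma. Without it, a nonzero element of your $V_\QQ$ could a priori come from a numerically trivial or $F$-linearly redundant cycle, its self-pairing could be $0$, and both your definiteness claim and the final non-degeneracy would fail.

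Once that injectivity is supplied, the rest of your argument is sound and coincides with the paper's: the pairing is $\QQ$-valued on $V_\QQ$ (intersection numbers), negative definite by the Hodge index theorem applied componentwise (and you are right that only the divisor case of the Hodge standard conjecture is needed, which is why Ito's stronger hypotheses can be dropped), and a definite $\QQ$-form has invertible Gram matrix, so its extension to $F$ is non-degenerate. The fix is short: insert the paper's compatibility-of-pairings argument for the injection ${\rm NS}(Y^{(1)})\otimes F\hookrightarrow H^2(Y^{(1)})$ (or invoke that numerically trivial divisor classes are torsion in ${\rm NS}$ and hence die in $\QQ$-coefficient cohomology). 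As submitted, however, the proof assumes the key point rather than proving it.
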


\begin{proof}
We have the following commutative square
\[ 
  \xymatrix{
    H^0(Y^{(2)}, \QQ) \ar[r]^{\tau_\QQ} \ar[d] & {\rm NS}(Y^{(1)}) \otimes \QQ \ar[d] \\
    H^0(Y^{(2)}) \ar[r]_\tau & H^2(Y^{(1)}), 
  }
\]
where $H^0(Y^{(2)}, \QQ) = \QQ^{\pi_0(Y^{(2)})}$. Here, the top map $\tau_{\QQ}$ is defined as the Gysin map for cycles. The vertical maps are compatible with the natural pairings $\langle, \rangle$ induced by the fixed line bundle $\mathcal{L}$. Since $\langle, \rangle$ is non-degenerate on ${\rm NS}(Y^{(1)})\otimes \QQ$ by the Hodge index theorem, the right vertical map induces an injection
\[ 
   {\rm NS}(Y^{(1)}) \otimes F \hookrightarrow H^2(Y^{(1)}).  
\]
It follows that $\ker(\tau_\QQ)\otimes F \to \ker(\tau)$ is an isomorphism. In other words, we have shown that the natural $\QQ$-structure on $H^0(Y^{(2)})$ induces a $\QQ$-structure on $\im(\tau)$.

 It now suffices to show that the pairing in the top right corner remains non-degenerate when restricted to $\im(\tau_{\QQ})\cap (P^2_\QQ)$ where $P^2_\QQ$ is the preimage in ${\rm NS}(Y^{(1)})\otimes \QQ$ of $P^2 (Y^{(1)})$. Note again that $\im(\tau_{\QQ})\cap P^2_\QQ$ is a $\QQ$-structure for $\im(\tau)\cap P^2 (Y^{(1)})$. 

But, again by the Hodge index theorem, $\langle, \rangle$ is definite on $P^2_\QQ$, and hence so is its restriction to $\im(\tau_{\QQ})$.
\end{proof}

\begin{lemma}[{cf.\ \cite[Lemma~5.6]{Ito}}] \label{lemma:ito-l5.6}
  The image of the composition
  \[
    \tau \rho\colon H^0(Y^{(1)}) \to H^0(Y^{(2)})\to H^2(Y^{(1)})
  \]
  is the orthogonal complement to $\im(\tau)\cap P^2 (Y^{(1)})$ inside $\im(\tau)$.  
\end{lemma}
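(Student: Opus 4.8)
The plan is to realise both sides as subspaces of $V := \im(\tau\colon H^0(Y^{(2)})\to H^2(Y^{(1)}))$, prove the inclusion $\im(\tau\rho)\subseteq (V\cap P^2(Y^{(1)}))^\perp$ by a formal adjunction argument, and then show the two spaces have the same dimension. Write $P = P^2(Y^{(1)})$ and $\langle x,y\rangle = L^{n-2}x\cdot y$ for the pairing on $H^2(Y^{(1)})$. For the inclusion I would in fact establish the stronger orthogonality $\im(\tau\rho)\perp V$: for $a\in H^0(Y^{(1)})$ and $c\in H^0(Y^{(2)})$, the adjointness of $\rho$ and $\tau$ recalled in \S\ref{ss:weight-ss} moves the outer $\tau$ across the pairing,
\[
  \langle \tau\rho a,\tau c\rangle = L^{n-2}(\tau\rho a)\cdot \tau c = \pm\, L^{n-2}\rho(\tau\rho a)\cdot c,
\]
and $\rho\tau\rho = -\tau\rho^2 = 0$ by the relations $\rho\tau+\tau\rho = 0$ and $\rho^2 = 0$, so the right-hand side vanishes. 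In particular $\im(\tau\rho)\subseteq (V\cap P)^\perp\cap V =: C$.

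Next I would compute $\dim C$. By Lemma~\ref{lemma:ito-l5.5} the pairing is non-degenerate on $V\cap P$, so $\dim C = \dim V - \dim(V\cap P)$. Since $P = \ker(L^{n-1}\colon H^2(Y^{(1)})\to H^{2n}(Y^{(1)}))$ we have $V\cap P = \tau(\ker(L^{n-1}\tau))$, and because $\ker\tau\subseteq\ker(L^{n-1}\tau)$ this difference equals $\operatorname{rank}(L^{n-1}\tau\colon H^0(Y^{(2)})\to H^{2n}(Y^{(1)}))$. Identifying $H^{2n}(Y^{(1)})$ with $H^0(Y^{(1)})^\vee$ by Poincar\'e duality and applying the projection formula, the associated bilinear form evaluates as
\[
  (c,a)\longmapsto \langle L^{n-1}\tau c, a\rangle = \pm\sum_{Z} \deg(\mathcal{L}|_Z)^{\,n-1}\, c_Z\,(\rho a)_Z,
\]
the sum over connected components $Z$ of $Y^{(1)}$. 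As the intersection numbers $\deg(\mathcal{L}|_Z)^{n-1}$ are positive, this form has the same rank as $\rho$, giving $\dim C = \dim\im\rho$.

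It then remains to see $\dim\im(\tau\rho) = \dim\im\rho$, i.e.\ that $\tau$ is injective on $\im\rho$. Suppose $\tau\rho a = 0$. Applying the other adjunction relation yields, for every $a'\in H^0(Y^{(1)})$,
\[
  \langle \rho a,\rho a'\rangle = L^{n-1}(\rho a)\cdot\rho a' = \pm\, L^{n-1}(\tau\rho a)\cdot a' = 0,
\]
so $\rho a$ lies in the radical of $\langle,\rangle$ restricted to $\im\rho\subseteq H^0(Y^{(2)})$. By Lemma~\ref{lemma:ito-l5.5-H0} this restriction is non-degenerate, whence $\rho a = 0$; thus $\ker\tau\cap\im\rho = 0$ and $\dim\im(\tau\rho) = \dim\im\rho = \dim C$. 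Together with $\im(\tau\rho)\subseteq C$ this forces $\im(\tau\rho) = C$, which is the assertion.

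The genuinely non-formal inputs are the two non-degeneracy statements (Lemmas~\ref{lemma:ito-l5.5-H0} and~\ref{lemma:ito-l5.5}), both ultimately coming from ampleness and the Hodge index theorem; everything else is adjunction and linear algebra. I expect the main point requiring care to be the bookkeeping identifying $\operatorname{rank}(L^{n-1}\tau)$ with $\operatorname{rank}\rho$, where one must be precise about the Poincar\'e-duality and projection-formula conventions and the signs in the adjunction relations.
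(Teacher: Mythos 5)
Your proof is correct, and its dimension count is genuinely different from the paper's. Both arguments share the same skeleton: first the orthogonality $\im(\tau\rho)\perp\im(\tau)$, proved exactly as you do via the adjunction relations and $\rho\tau\rho=-\tau\rho^{2}=0$, and then a dimension count forcing the inclusion $\im(\tau\rho)\subseteq\left(\im(\tau)\cap P^{2}(Y^{(1)})\right)^{\perp}\cap\im(\tau)$ to be an equality. For the count, the paper translates into Ito's notation and quotes \cite[Lemmas~5.4, 5.6 and 5.7]{Ito} to conclude that $\tau$ induces a bijection $\im(\rho)\to\im(\tau)/(\im(\tau)\cap P^{2}(Y^{(1)}))$; you instead argue self-containedly: the orthogonal complement has dimension $\operatorname{rank}(L^{n-1}\tau)$ by Lemma~\ref{lemma:ito-l5.5}, this rank equals $\operatorname{rank}(\rho)$ by Poincar\'e duality, the projection formula and positivity of the intersection numbers, and $\operatorname{rank}(\rho)=\dim\im(\tau\rho)$ because $\ker(\tau)\cap\im(\rho)=0$ by Lemma~\ref{lemma:ito-l5.5-H0}. (This last step reproduces, with no circularity, the paper's proof of the first assertion of Lemma~\ref{lemma:ito-l5.1}, which in the paper is stated only \emph{after} the present lemma.) What your route buys is independence from Ito's primitive-decomposition bookkeeping with the spaces ${\rm Im}^{j}\rho$, ${\rm Im}^{j}\tau$, at the cost of the explicit rank computation; the non-formal inputs (ampleness and the Hodge index theorem, entering through Lemmas~\ref{lemma:ito-l5.5-H0} and \ref{lemma:ito-l5.5}) are identical in both. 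One small slip to fix: in your displayed formula the sum must run over the connected components $Z$ of $Y^{(2)}$, not of $Y^{(1)}$ (indeed $c$ and $\rho a$ live in $H^{0}(Y^{(2)})$), with $\deg(\mathcal{L}|_Z)^{n-1}$ read as the top self-intersection number $c_1(\mathcal{L}|_Z)^{n-1}[Z]$ on the $(n-1)$-dimensional $Z$; this does not affect the argument, since all that matters is that these diagonal entries are positive.
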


\begin{proof}
Given Lemmas~\ref{lemma:ito-l5.5-H0} and \ref{lemma:ito-l5.5}, it suffices to quote the proof of the first assertion of \cite[Lemma~5.6]{Ito} and \cite[Lemma~5.7]{Ito} with $i=0$ and $k=1$. Indeed, using Ito's notation, we have 
\[ 
  \im(\rho\colon H^0(Y^{(1)}) \to H^0(Y^{(2)})) = {\rm Im}^0 \rho_0^{(1)},
  \quad
  {\rm Im}^0 \tau_0^{(2)} = \im(\tau)\cap P^2(Y^{(1)}),
\]
and ${\rm Im}^1 \tau_0^{(2)} = \im(\tau)/(\im(\tau)\cap P^2(Y^{(1)}))$. The argument in the proof of \cite[Lemma~5.6]{Ito} shows that $\tau\colon \im(\rho)\to \im(\tau)/(\im(\tau)\cap P^2(Y^{(1)})$ is injective. It is then also bijective by \cite[Lemma~5.4]{Ito}. Finally, we check that $\im(\tau\rho)$ is orthogonal to the entire $\im(\tau)$. If $x = \tau\rho z$ and $y= \tau w$, then
\[ 
  \langle y, x\rangle = L^{n-2} y \cdot x = L^{n-2} \tau w \cdot \tau\rho z = \pm L^{n-2} w\cdot \rho \tau \rho z = 0.
\]
We conclude by Lemma~\ref{lemma:ito-l5.5}.
\end{proof}

\begin{lemma}[{cf.\ \cite[Lemma~5.1]{Ito}}] \label{lemma:ito-l5.1}
  In the following sequence of maps 
    \[ 
      H^0 (Y^{(1)}) \xlongrightarrow{\rho} H^0(Y^{(2)}) \xlongrightarrow{\tau} H^2(Y^{(1)}) \xlongrightarrow{\rho} H^2(Y^{(2)})
  \]
  one has $\ker(\tau)\cap \im(\rho) =0 \subseteq H^0(Y^{(2)}) $ and $\ker(\rho)\cap \im(\tau) = \im (\tau\rho) \subseteq H^2(Y^{(1)})$.
\end{lemma}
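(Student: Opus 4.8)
The plan is to deduce both statements formally from the non-degeneracy results of Lemmas~\ref{lemma:ito-l5.5-H0} and \ref{lemma:ito-l5.5}, the orthogonality description of Lemma~\ref{lemma:ito-l5.6}, and the $\rho$--$\tau$ adjointness relations recorded in \S\ref{ss:weight-ss}. The common mechanism in both cases is the same: given an element lying in the relevant image, the vanishing hypothesis forces it to be orthogonal to a subspace on which the pairing $\langle , \rangle$ is non-degenerate, whence it vanishes.

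For the first assertion, I would take $x = \rho z \in \im(\rho) \cap \ker(\tau) \subseteq H^0(Y^{(2)})$ and test it against an arbitrary $\rho w \in \im(\rho)$. The adjointness relation between $\rho$ and $\tau$ (applied with $q = q' = 0$, $k = 2$, so $s=0$) gives
\[ \langle \rho z, \rho w\rangle = L^{n-1}\rho z \cdot \rho w = \pm\, L^{n-1}\tau\rho z \cdot w = \pm\, L^{n-1}(\tau x)\cdot w. \]
Since $\tau x = 0$, the element $x$ is orthogonal to all of $\im(\rho)$; as the restriction of $\langle , \rangle$ to $\im(\rho)$ is non-degenerate by Lemma~\ref{lemma:ito-l5.5-H0}, this yields $x = 0$.

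For the second assertion, the inclusion $\im(\tau\rho) \subseteq \ker(\rho)\cap\im(\tau)$ is immediate: membership in $\im(\tau)$ is clear, while $\rho\tau\rho = -\tau\rho^2 = 0$ (using $\rho\tau + \tau\rho = 0$ and $\rho^2 = 0$) gives membership in $\ker(\rho)$. For the reverse inclusion I would first record the direct-sum decomposition
\[ \im(\tau) = \bigl(\im(\tau)\cap P^2(Y^{(1)})\bigr) \oplus \im(\tau\rho), \]
which follows from Lemma~\ref{lemma:ito-l5.6} (identifying $\im(\tau\rho)$ as the orthogonal complement of $V := \im(\tau)\cap P^2(Y^{(1)})$ inside $\im(\tau)$) together with the non-degeneracy of $\langle , \rangle|_V$ from Lemma~\ref{lemma:ito-l5.5}, via the standard fact that a subspace on which a bilinear form is non-degenerate splits off its orthogonal complement. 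Now, given $x \in \ker(\rho)\cap\im(\tau)$, I would write $x = v + u$ with $v \in V$ and $u \in \im(\tau\rho)$; since $\rho u = 0$ and $\rho x = 0$, we get $\rho v = 0$. The other adjointness relation (with $q = 2$, $q' = 0$, $k = 1$, so $s=1$) then gives, for every $w$,
\[ \langle v, \tau w\rangle = L^{n-2} v \cdot \tau w = \pm\, L^{n-2}\rho v \cdot w = 0, \]
so $v$ is orthogonal to $\im(\tau)$, in particular to $V$. As $v \in V$ and $\langle , \rangle|_V$ is non-degenerate, $v = 0$, and therefore $x = u \in \im(\tau\rho)$.

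The substantive content has effectively been front-loaded into the preceding three lemmas, so the main obstacle is not conceptual but bookkeeping: one must track the powers of $L$ and the levels $k$ so that the adjointness identities are applied with the correct exponents and land in the top-degree cohomology where the intersection pairing is defined. The one genuinely non-formal ingredient is the observation that $\rho v = 0$ \emph{alone} forces $v \perp \im(\tau)$ through adjointness; this is what allows the unwanted primitive component $v$ to be killed by the non-degeneracy of Lemma~\ref{lemma:ito-l5.5}.
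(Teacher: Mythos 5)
Your proof is correct and follows essentially the same route as the paper's: both assertions are deduced from Lemmas~\ref{lemma:ito-l5.5-H0}, \ref{lemma:ito-l5.5}, and \ref{lemma:ito-l5.6} together with the $\rho$--$\tau$ adjointness relations, the key step in each case being that $\tau x = 0$ (resp.\ $\rho x = 0$) forces orthogonality to $\im(\rho)$ (resp.\ to all of $\im(\tau)$). The only cosmetic difference is that the paper applies the orthogonality argument directly to $x$ and reads off $x \in \im(\tau\rho)$ from Lemma~\ref{lemma:ito-l5.6}, whereas you first split $\im(\tau) = \bigl(\im(\tau)\cap P^2(Y^{(1)})\bigr)\oplus\im(\tau\rho)$ and then kill the primitive component---a mild repackaging of the same argument.
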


\begin{proof}
Let $0\neq x \in \ker(\tau)\cap \im(\rho) \subseteq H^0(Y^{(2)})$. Since $\langle, \rangle$ is non-degenerate on $\im(\rho) \subseteq H^0(Y^{(2)})$ by Lemma~\ref{lemma:ito-l5.5-H0}, there exists a $z\in H^0(Y^{(1)})$ such that $\langle x, \rho y \rangle \neq 0$, but
\[ 
  \langle x, \rho y \rangle  = L^{n-1} x \cdot \rho z = \pm L^{n-1} \tau x \cdot z = 0, 
\]
contradiction. Thus $\ker(\tau)\cap \im(\rho) = 0$.

Let now $0\neq x \in \ker(\rho)\cap \im(\tau) \subseteq H^2(Y^{(1)})$. By Lemma~\ref{lemma:ito-l5.6}(b), it suffices to show that $x$ is orthogonal to every element in $\im(\tau)\cap P^2(Y^{(1)})$. In fact, we have $\langle x, y\rangle = 0$ for every $y\in \im(\tau)$, for if $y=\tau z$ then
\[ 
  \langle x, y\rangle = \langle x, \tau z\rangle = L^{n-2} x \cdot \tau z = \pm L^{n-2} \rho x \cdot z = 0. \qedhere
\] 
\end{proof}

\begin{prop}[{Weight--monodromy conjecture for $H^1(Y)$}] \label{prop:WM-H1}
  The identity on $H^0(Y^{(2)})$ induces an isomorphism
  \[ 
    N\colon \ker\left({\textstyle (\tau+\rho)\colon H^0(Y^{(2)}) \to H^2(Y^{(1)})\oplus H^0(Y^{(3)})}\right)
    \isomto 
    \frac{ \ker(\rho\colon H^0(Y^{(2)})\to H^0(Y^{(3)})) }{\im(\rho\colon H^0(Y^{(1)})\to H^0(Y^{(2)})) }.
  \]
\end{prop}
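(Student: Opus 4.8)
The plan is to identify both $E_2$-terms explicitly from the shape of the weight spectral sequence and then to recognize the asserted isomorphism as a formal consequence of Lemma~\ref{lemma:ito-l5.1}. Writing $V=H^0(Y^{(2)})$, a direct inspection of the formula $E_1^{a,b}=\bigoplus_{k\ge\max(a,0)}H^{2(a-k)+b}(Y^{(2k-a+1)})$ shows that the positions we need are $E_1^{-2,2}=0$, $E_1^{-1,2}=V$, $E_1^{0,2}=H^2(Y^{(1)})\oplus H^0(Y^{(3)})$, $E_1^{0,0}=H^0(Y^{(1)})$, $E_1^{1,0}=V$, and $E_1^{2,0}=H^0(Y^{(3)})$, all further summands vanishing for degree reasons. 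Reading off the operators realizing the differentials (using that $\tau$ preserves the index $k$ while $\rho$ raises it by one) gives $d_1^{-1,2}=\tau+\rho$ and $d_1^{0,0}=d_1^{1,0}=\rho$. Hence
\[
E_2^{-1,2}=\ker(\tau+\rho)=\ker\tau\cap\ker\rho,\qquad E_2^{1,0}=\ker\rho\big/\im\bigl(\rho\colon H^0(Y^{(1)})\to V\bigr),
\]
where $\tau\colon V\to H^2(Y^{(1)})$, $\rho\colon V\to H^0(Y^{(3)})$, and where I have used that $\tau$ and $\rho$ land in different groups so that $\ker(\tau+\rho)$ is their common kernel. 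Since $N\colon E_1^{-1,2}\to E_1^{1,0}$ is the identity on the shared summand $V$, the induced map is the composite $\ker\tau\cap\ker\rho\hookrightarrow\ker\rho\twoheadrightarrow\ker\rho/\im\rho$, and it remains only to prove that this composite is bijective.

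For injectivity, its kernel is $(\ker\tau\cap\ker\rho)\cap\im\rho$; as $\rho^2=0$ forces $\im(\rho\colon H^0(Y^{(1)})\to V)\subseteq\ker(\rho\colon V\to H^0(Y^{(3)}))$, this reduces to $\ker\tau\cap\im\rho$, which vanishes by the first assertion of Lemma~\ref{lemma:ito-l5.1}. For surjectivity I would take any $x\in\ker\rho$ and seek $z\in H^0(Y^{(1)})$ with $x-\rho z\in\ker\tau$, equivalently $\tau x=\tau\rho z$; such an $x-\rho z$ then lies in $\ker\tau\cap\ker\rho$ and represents the class of $x$. The crux is that $\tau x$ already lies in $\im(\tau\rho)$: indeed $\tau x\in\im\tau$, while $\rho\tau+\tau\rho=0$ together with $\rho x=0$ gives $\rho(\tau x)=-\tau(\rho x)=0$, so $\tau x\in\ker(\rho\colon H^2(Y^{(1)})\to H^2(Y^{(2)}))\cap\im\tau$, which is precisely $\im(\tau\rho)$ by the second assertion of Lemma~\ref{lemma:ito-l5.1}. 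Choosing $z$ with $\tau\rho z=\tau x$ then completes the surjectivity.

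In short, once the two $E_2$-terms are matched up, the proposition becomes a short diagram chase whose only genuine inputs are the two assertions of Lemma~\ref{lemma:ito-l5.1} and the anticommutation $\rho\tau+\tau\rho=0$. I therefore do not expect the difficulty to lie in this proposition itself; the substantive work is already done in Lemma~\ref{lemma:ito-l5.1}, which in turn rests on the Hodge index theorem through the non-degeneracy statements of Lemmas~\ref{lemma:ito-l5.5-H0}--\ref{lemma:ito-l5.5}. The one step demanding care is the bookkeeping above—correctly determining which summands of the $E_1$-page are nonzero and which of $\rho,\tau$ realizes each differential—since a slip there would misidentify the source or target of $N$.
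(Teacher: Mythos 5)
Your proposal is correct and follows essentially the same route as the paper's proof: injectivity via the first assertion of Lemma~\ref{lemma:ito-l5.1} applied to $\ker\tau\cap\im\rho$, and surjectivity by applying its second assertion to $\tau x$ and correcting $x$ by $\rho z$. Your write-up even makes explicit two small steps the paper leaves implicit, namely that $\rho(\tau x)=-\tau(\rho x)=0$ (needed to invoke the second assertion) and the $E_1$-page bookkeeping identifying the relevant $E_2$-terms.
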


\begin{proof}
We first show injectivity. Let $x\in H^0 (Y^{(2)})$ with $\tau x = 0$ and $\rho x = 0$, and suppose that $x= \rho y$. By the first assertion of Lemma~\ref{lemma:ito-l5.1} applied to $x\in \im \rho\cap \ker \tau$, we have $x = 0$. 

For the surjectivity, suppose that $x \in H^0(Y^{(2)})$ with $\rho x = 0$. Apply the second assertion of Lemma~\ref{lemma:ito-l5.1} to $\tau x \in H^2 (Y^{(1)})$, obtaining a $z\in H^0(Y^{(1)})$ such that $\tau x = \tau \rho z$. Let $y = x - \rho z$, so $\tau y = 0$ and $x = y + \rho z$ shows that $x$ is congruent modulo $\im \rho$ to an element of $\ker \tau$.
\end{proof}
  
\begin{thm} \label{thm:log-HL-H1}
  Let $Y$ be a proper strictly semistable log scheme over $k$. Then for every ample line bundle $\mathcal{L}$ on $Y$, the log hard Lefschetz conjecture holds for $H^1(Y)$. 
\end{thm}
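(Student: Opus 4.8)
The plan is to pass to the $E_2$-page of the weight spectral sequence and reduce the log hard Lefschetz isomorphism $L^{n-1}\colon H^1(Y)\to H^{2n-1}(Y)(n-1)$ to statements about the smooth projective strata $Y^{(k)}$. Since $L$ has bidegree $(0,2)$ and the spectral sequence degenerates at $E_2$, the map $L^{n-1}$ is block-diagonal for the weight grading, so it suffices to show that $L^{n-1}\colon E_2^{a,1-a}\to E_2^{a,2n-1-a}$ is an isomorphism for $a\in\{-1,0,1\}$ (the only nonzero pieces in total degree $1$). Computing the relevant $E_1$-terms as in \S\ref{ss:weight-ss}, the three source pieces are
\[
  E_2^{-1,2}=\ker\!\big(\rho+\tau\colon H^0(Y^{(2)})\to H^2(Y^{(1)})\oplus H^0(Y^{(3)})\big),
\]
\[
  E_2^{0,1}=\ker\!\big(\rho\colon H^1(Y^{(1)})\to H^1(Y^{(2)})\big),
\]
\[
  E_2^{1,0}=\ker\!\big(\rho\colon H^0(Y^{(2)})\to H^0(Y^{(3)})\big)\big/\im\!\big(\rho\colon H^0(Y^{(1)})\to H^0(Y^{(2)})\big),
\]
while the target pieces $E_2^{a,2n-1-a}$ are computed the same way, the middle one being $\cok(\tau\colon H^{2n-3}(Y^{(2)})\to H^{2n-1}(Y^{(1)}))$.

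For the middle piece $a=0$ I would argue as follows. Hard Lefschetz on the smooth projective variety $Y^{(1)}$ of dimension $n$ gives an isomorphism $L^{n-1}\colon H^1(Y^{(1)})\isomto H^{2n-1}(Y^{(1)})$. The adjointness of $\rho$ and $\tau$ recorded in \S\ref{ss:weight-ss} identifies $\im(\tau\colon H^{2n-3}(Y^{(2)})\to H^{2n-1}(Y^{(1)}))$ with the annihilator of $\ker(\rho)$ under the perfect Poincaré pairing $H^1(Y^{(1)})\times H^{2n-1}(Y^{(1)})\to F$; in particular source and target have equal dimension. It then remains to see that the composite $\ker(\rho)\hookrightarrow H^1(Y^{(1)})\xrightarrow{L^{n-1}}H^{2n-1}(Y^{(1)})\twoheadrightarrow\cok(\tau)$ is injective, which amounts to non-degeneracy of the Lefschetz form $\langle x,y\rangle=L^{n-1}x\cdot y$ on the subspace $\ker(\rho)\subseteq H^1(Y^{(1)})$. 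Since $H^1(Y^{(1)})$ is entirely primitive (as $L^n$ lands in $H^{2n+1}=0$), this follows once we know that $\langle,\rangle$ is definite on $H^1(Y^{(1)})$, i.e.\ the Hodge standard property in degree $1$; as noted before Lemma~\ref{lemma:ito-l5.5-H0}, this positivity is supplied by the polarization of the Albanese variety via Kleiman's argument \cite[2A9.5]{Kleiman}, and holds in each of the cohomology theories of \S\ref{ss:log-cohomology}.

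For the two outer pieces $a=\pm1$ I would use weight--monodromy together with Poincaré duality to avoid a second positivity computation. By Proposition~\ref{prop:WM-H1} the monodromy operator gives $N\colon E_2^{-1,2}\isomto E_2^{1,0}$, and the Poincaré-dual statement (weight--monodromy for $H^{2n-1}$) gives $N\colon E_2^{-1,2n}\isomto E_2^{1,2n-2}$; indeed Poincaré duality on $Y$ identifies $E_2^{-1,2}$ with the dual of $E_2^{1,2n-2}$ and $E_2^{1,0}$ with the dual of $E_2^{-1,2n}$, intertwining the two monodromy maps. As $N$ and $L$ commute, the square relating $L^{n-1}$ on the $a=-1$ and $a=+1$ pieces has vertical isomorphisms $N$, so it suffices to treat a single outer piece. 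That remaining map is governed entirely by degree-$0$ and top-degree cohomology of the strata, where hard Lefschetz $L^{n+1-k}\colon H^0(Y^{(k)})\isomto H^{2(n+1-k)}(Y^{(k)})$ is tautological and $\rho,\tau$ in top degree are Poincaré-dual to $\rho,\tau$ in degree $0$; the required non-degeneracy is then exactly the positive-definiteness established in Lemma~\ref{lemma:ito-l5.5-H0}.

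The main obstacle is the middle piece, and within it the one input that is not formal: the non-degeneracy (indeed definiteness) of the Lefschetz form on $H^1(Y^{(1)})$, which forces us through the Hodge standard property in degree $1$. Everything else — the block-diagonal reduction, the adjointness of $\rho$ and $\tau$, the dimension count, and the reduction of the outer pieces to the inner one — is formal, relying only on hard Lefschetz and Poincaré duality for the smooth projective strata together with the already-proven weight--monodromy isomorphism of Proposition~\ref{prop:WM-H1}. I expect the bookkeeping for the outer pieces (matching the explicit descriptions of $E_2^{1,2n-2}$ and $E_2^{-1,2n}$ with the duals of $E_2^{1,0}$ and $E_2^{-1,2}$) to be the most tedious step, but it presents no conceptual difficulty beyond Lemma~\ref{lemma:ito-l5.5-H0}.
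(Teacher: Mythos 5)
Your reduction to the three $E_2$-pieces $a\in\{-1,0,1\}$ and your treatment of the two outer pieces follow the paper's proof: Proposition~\ref{prop:WM-H1} together with its Poincar\'e-dual statement supplies the isomorphisms $N$ connecting the $a=-1$ and $a=+1$ pieces, Poincar\'e duality between opposite corners gives the dimension count, and injectivity of one outer map is extracted from Lemma~\ref{lemma:ito-l5.5-H0} exactly as in the paper's argument for $\ell_2$. The gap is in the middle piece $a=0$. You reduce it, correctly, to non-degeneracy of the form $\langle x,y\rangle=L^{n-1}x\cdot y$ restricted to $\ker(\rho)\subseteq H^1(Y^{(1)})$, but you then claim this follows from ``definiteness'' of the form on $H^1(Y^{(1)})$. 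No such definiteness exists: on odd-degree cohomology this pairing is alternating ($\langle x,y\rangle=-\langle y,x\rangle$ by graded commutativity of the cup product), so ``definite'' is meaningless, and moreover for the theories where the theorem is actually needed the coefficient field is $\QQ_\ell$ or $W(k)[1/p]$, which carries no order, so no positivity statement about an $F$-valued form can even be formulated. The distinction is not pedantic: a non-degenerate alternating form can vanish identically on a subspace (any Lagrangian), so non-degeneracy of the restriction to $\ker(\rho)$ is a genuine extra input --- indeed it is the one hard point of the theorem. The mechanism that rescues Lemmas~\ref{lemma:ito-l5.5-H0} and~\ref{lemma:ito-l5.5}, namely a natural $\QQ$-structure (characteristic functions of components, respectively N\'eron--Severi classes) on which the pairing is definite by ampleness or the Hodge index theorem, has no analogue on $H^1$.

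The paper's fix, following Rapoport--Zink, is to make the subspace itself motivic rather than to seek positivity of the cohomological pairing: $\ker(\rho)$ is identified with $H^1(C)$, where $C$ is the cokernel of $d^0-d^1\colon {\rm Alb}(Y^{(2)})\to{\rm Alb}(Y^{(1)})$, and the target $\cok(\tau)$ is identified with $H^1(C^\vee)$. Lemma~\ref{lemma:polarization} (Kleiman's \cite[2A9.5]{Kleiman}, which you cite) shows that $L^{n-1}$ on $H^1(Y^{(1)})$ is induced by an isogeny $\Psi\colon A^\vee\to A$ with $A={\rm Alb}(Y^{(1)})$, so that $\ell_1$ is induced by the composite $C^\vee\to A^\vee\xrightarrow{\Psi}A\to C$; this composite is an isogeny because $\Psi$ comes from a polarization and restrictions of polarizations to abelian subvarieties remain polarizations --- this is where positivity enters, but in a purely algebraic form that makes sense over any $k$ and transports to any of the cohomology theories of \S\ref{ss:log-cohomology}. (Even in the Betti case over $\CC$, where Hodge--Riemann relations are available, what one uses is that $\ker(\rho)$ is a sub-Hodge structure, not that the form is ``definite on $H^1$.'') So your appeal to the Albanese polarization is the right instinct, but the step ``non-degenerate on $H^1$, hence on the subspace'' is invalid as written, and repairing it requires precisely this abelian-variety argument.
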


\begin{proof}
Let $n=\dim Y$. The maps $\ell = L^{n-1}$ induce isomorphisms on the relevant entries of the $E_1$ page of the weight spectral sequence for $Y$ as follows:
\[ 
  \def\arhl{\ar@/^1em/@{.>}[uuuu]_{\rotatebox{90}{$\sim$}}^-{\ell} }
  \xymatrix@R=.7em@C=1.2em{ 
    H^{2n-r}(Y^{(3)}) \ar[r] & H^{2n-2}(Y^{(2)}) \ar[r] & H^{2n}(Y^{(1)}) & 0 \\
   & H^{2n-3}(Y^{(2)}) \ar[r] & H^{2n - 1}(Y^{(1)}) & 0 \\
   & & H^{2n-2}(Y^{(1)})\oplus H^{2n-4}(Y^{(3)}) \ar[r] & H^{2n-2}(Y^{(2)}) & 0\\
   & \cdots & & \cdots \\ 
   0 & H^0(Y^{(2)}) \arhl \ar[r] & H^2(Y^{(1)})\oplus H^0(Y^{(3)}) & \\
   & 0 & H^1(Y^{(1)}) \ar[r] \arhl & H^1(Y^{(2)}) \\
   & 0 & H^0(Y^{(1)}) \ar[r] & H^0(Y^{(2)}) \arhl \ar[r] & H^0(Y^{(3)})
  }
\]
The spectral sequence degenerates on the $E_2$ page. We need to show that the three maps $\ell$ induce isomorphisms on the $E_2$ page:
\[ 
  \textstyle \ell_0 \colon \frac{\ker(H^0(Y^{(2)}) \to H^0(Y^{(3)}))}{\im(H^0(Y^{(1)}) \to H^0(Y^{(2)}))}
  \longrightarrow
  \cok\left(H^{2n-2}(Y^{(1)})\oplus H^{2n-4}(Y^{(3)}) \to H^{2n-2}(Y^{(2)})\right)
\]
\[ 
  \textstyle \ell_1\colon \ker\left(H^1(Y^{(1)}) \to H^1(Y^{(2)})\right) 
  \longrightarrow
  \cok\left(H^{2n-3}(Y^{(2)}) \to H^{2n - 1}(Y^{(1)})\right), 
\]
\[ 
  \textstyle \ell_2 \colon \ker\left(H^0(Y^{(2)}) \to H^2(Y^{(1)})\oplus H^0(Y^{(3)})\right)
  \longrightarrow
  \frac{\ker(H^{2n-2}(Y^{(2)}) \to H^{2n}(Y^{(1)}) )}{\im(H^{2n-2}(Y^{(3)}) \to H^{2n-2}(Y^{(2)}))}
\]

We shall first prove that the map $\ell_2$ is injective. Say $y\in H^0(Y^{(2)})$ satisfies $\rho y =0$ and $\tau y =0$, and suppose that $L^{n-1} y = \tau z$ for some $z \in H^{2n-4}(Y^{(3)})$. If $y\neq 0$, then by Lemma~\ref{lemma:ito-l5.5-H0} there exists a $t\in H^0(Y^{(2)})$ such that $\rho t = 0$ and $\langle t, y \rangle \neq 0$. But
\[ 
  \langle t, y \rangle = t\cdot L^{n-1} y = t\cdot \tau z  = \pm \rho t\cdot z = 0,
\]
a contradiction.

Proposition~\ref{prop:WM-H1} implies that in the commutative square 
\[ 
  \xymatrix{
      \frac{\ker(H^{2n-2}(Y^{(2)}) \to H^{2n}(Y^{(1)}) )}{\im(H^{2n-4}(Y^{(3)}) \to H^{2n-2}(Y^{(2)}))} \ar[r]^-\sim_-N & \scriptstyle \cok\left(H^{2n-2}(Y^{(1)})\oplus H^{2n-4}(Y^{(3)}) \to H^{2n-2}(Y^{(2)})\right) \\ 
      \scriptstyle \ker\left(H^0(Y^{(2)}) \to H^2(Y^{(1)})\oplus H^0(Y^{(3)})\right) \ar[r]^-\sim_-N \ar[u]^{\ell_2} & \frac{\ker(H^0(Y^{(2)}) \to H^0(Y^{(3)}))}{\im(H^0(Y^{(1)}) \to H^0(Y^{(2)}))} \ar[u]_{\ell_0} 
  }
\]
the horizontal maps are isomorphisms. Moreover, by Poincar\'e duality, the opposite corners are dual to each other, and therefore all four vector spaces have the same dimension. Since $\ell_0$ is injective, it is thus an isomorphism, which implies that $\ell_2$ is an isomorphism as well.

Finally, we deal with $\ell_1$. The proof is directly inspired by \cite[Satz~2.13]{RapoportZink}. Let $A = {\rm Alb}(Y^{(1)})$ and $B = {\rm Alb}(Y^{(2)})$. The simplicial maps $d^0, d^1 \colon Y^{(2)} \to Y^{(1)}$ induce a map $\rho = d^0 - d^1 \colon B\to A$. Let $C$ be its cokernel. Then the source $\ker\left(H^1(Y^{(1)}) \to H^1(Y^{(2)})\right)$ of $\ell_1$ is identified with 
\[ 
  \ker\left(\rho^* \colon H^1(A) \to H^1(B)\right) = H^1(C),
\]
and similarly the target of $\ell_1$ is
\[ 
  \cok((\rho^\vee)^*\colon H^1(B^\vee) \to H^1(A^\vee)) = H^1(C^\vee).
\]
By Lemma~\ref{lemma:polarization} below, the map $\ell \colon H^1(A)\to H^1(A^\vee)$ is, up to replacing $\mathcal{L}$ with a multiple, induced by an isogeny $\Psi\colon A^\vee\to A$. Therefore $\ell_1$, interpreted as a map $H^1(C)\to H^1(C^\vee)$, is induced by the composition $C^\vee \to A^\vee \to A \to C$. This composition is an isogeny and hence $\ell_1$ is an isomorphism.
\end{proof}

\begin{lemma} \label{lemma:polarization}
  Let $X$ be smooth and projective over $k$, with an ample line bundle $\mathcal{L}$. Let $n=\dim X$. Let $A$ be the Albanese variety of $X$ and let $A^\vee$ be its dual, naturally identified with the Picard variety ${\rm Pic}^0(X)_{\rm red}$ of $X$. Then, after replacing $\mathcal{L}$ with $\mathcal{L}^m$ for some $m\geq 1$, there exists an isogeny $\Psi_{\mathcal{L}}\colon A^\vee \to A$ making the following square commute
  \[ 
    \xymatrix{ 
      H^1(X) \ar[rr]^{L^{n-1}}_\sim & & H^{2n-1}(X) \ar[d]^\simeq \\
      H^1(A) \ar[u]^\simeq  \ar[rr]_{\Psi_{\mathcal{L}}^*} & & H^1(A^\vee).
    }
  \]
\end{lemma}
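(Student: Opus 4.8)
The plan is to realize the operator $L^{n-1}$ on $H^1$ as the composition of a restriction and a Gysin map along a smooth curve, and then to recognize both of these as maps induced by homomorphisms of abelian varieties, via the universal properties of the Albanese and the Jacobian.

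First I would replace $\mathcal{L}$ by a very ample power $\mathcal{L}^m$ and apply Bertini to pick $n-1$ general members of $|\mathcal{L}^m|$ whose intersection is a smooth projective curve $j\colon C\hookrightarrow X$ with $[C] = c_1(\mathcal{L}^m)^{n-1}$ in $H^{2n-2}(X)$. Then the Lefschetz operator attached to $\mathcal{L}^m$ is cup product with $[C]$, and the projection formula gives $L^{n-1} = j_*\circ j^*\colon H^1(X)\to H^{2n-1}(X)$, where $j^*\colon H^1(X)\to H^1(C)$ is restriction and $j_*\colon H^1(C)\to H^{2n-1}(X)$ is the Gysin map. Since the lemma permits renaming $\mathcal{L}^m$ as $\mathcal{L}$, no stray constants survive.

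Next I would bring in abelian varieties. Set $J = {\rm Jac}(C) = {\rm Alb}(C)$, a principally polarized abelian variety. The Albanese maps give identifications $H^1(X) = H^1(A)$ and $H^1(C) = H^1(J)$, while Poincaré duality on $X$ together with the autoduality of $H^1$ of an abelian variety identifies $H^{2n-1}(X) = H^1(A^\vee)$ (suppressing Tate twists as in \S\ref{ss:log-cohomology}). Composing $j$ with the Albanese map of $X$ and invoking the universal property of $J$ produces a homomorphism $\phi\colon J\to A$, and functoriality of $H^1$ gives $j^* = \phi^*$ under these identifications. For the Gysin map, the key point is that $j_*$ is Poincaré dual to $j^* = \phi^*$; combined with the fact that the autoduality of $H^1(C)$ is implemented by the canonical principal polarization $\theta\colon J\isomto J^\vee$, this identifies $j_*$ with $(\phi^\vee)^*\circ(\theta^{-1})^*$, i.e.\ with the map on $H^1$ induced by $\theta^{-1}\circ\phi^\vee\colon A^\vee\to J$. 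Setting $\Psi_{\mathcal{L}} = \phi\circ\theta^{-1}\circ\phi^\vee\colon A^\vee\to A$, contravariance yields $\Psi_{\mathcal{L}}^* = (\phi^\vee)^*\circ(\theta^{-1})^*\circ\phi^* = j_*\circ j^* = L^{n-1}$, which is exactly the asserted commutativity. Finally, hard Lefschetz for the Weil cohomology theory $H^*$ on the smooth projective variety $X$ makes $L^{n-1}$ an isomorphism, so $\Psi_{\mathcal{L}}$ is a homomorphism between abelian varieties of the same dimension inducing an isomorphism on $H^1$, and is therefore an isogeny.

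The main obstacle is the compatibility packaged in the previous paragraph: that the Gysin map $j_*$ corresponds, under the abelian-variety identifications, to $(\phi^\vee)^*\circ(\theta^{-1})^*$. This rests on two facts that must hold in each of the cohomology theories of \S\ref{ss:log-cohomology}, namely that $j_*$ is the Poincaré dual of $j^*$, and that Poincaré duality on a smooth projective curve matches, up to sign, the principal polarization of its Jacobian, together with the compatibility of duality on $X$ in degrees $1$ and $2n-1$ with the canonical duality between $A$ and $A^\vee$. This is precisely the content isolated by Kleiman \cite[2A9.5]{Kleiman}, to which I would appeal rather than reprove it in each Weil cohomology.
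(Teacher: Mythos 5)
Your proposal is correct and follows essentially the same route as the paper: both cut $X$ by a smooth curve linear section $C$ in $|\mathcal{L}^m|$, factor $L^{n-1}$ as $j_*\circ j^*$, identify these maps with ${\rm Alb}$/${\rm Pic}^0$ functoriality plus the principal polarization of ${\rm Jac}(C)$ via Kleiman \cite[2A9.5]{Kleiman}, and take $\Psi_{\mathcal{L}}$ to be the composite $A^\vee \to J \to A$ (your $\phi\circ\theta^{-1}\circ\phi^\vee$ is exactly the paper's ${\rm Alb}(i)\circ{\rm Pic}^0_{\rm red}(i)$). The only cosmetic difference is that you spell out the projection-formula factorization and the isogeny check explicitly, where the paper absorbs both into the quoted diagram from Kleiman.
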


\begin{proof}
We may assume that $\mathcal{L}$ is very ample. Let $i\colon Y\hookrightarrow X$ be a smooth one-dimensional linear section of $X$. Let $J = {\rm Alb}(Y) = {\rm Pic}^0(Y)$. By \cite[2A9.5]{Kleiman} we have a commutative diagram
\[ 
  \xymatrix{
    H^1(X) \ar[dr]_{i^*} \ar[rr]^{L^{n-1}}_\sim & & H^{2n-1}(X)\ar[d]^\simeq \\
    H^1(A)\ar[u]^\simeq \ar[dr]_{{\rm Alb}(i)^*} & H^1(Y) \ar@{=}[d] \ar[ur]_{i_*} & H^1(A^\vee) \\
    & H^1(J) \ar[ur]_{{\rm Pic}(i)^*}
  }
\]
We conclude that we can take for $\Psi$ the composition
\[ 
  A^\vee = {\rm Pic}^0(X)_{\rm red}
  \xrightarrow{{\rm Pic}^0_{\rm red}(i)} {\rm Pic}^0(Y) = J = {\rm Alb}(Y) 
  \xrightarrow{{\rm Alb}(i)}
  {\rm Alb}(X) = A. \qedhere 
\]
\end{proof}


\section{Log hard Lefschetz for varieties of combinatorial type}
\label{s:logHL-comb}

\subsection{Ito modules}
\label{ss:ito-modules}

Consider a bigraded vector space $V = \bigoplus_{i,j\in\ZZ} V^{i,j}$ over $\QQ$ endowed with commuting operators $N$ (of bidegree $(2,0)$), $L$ (of bidegree $(0,2)$), and $d$ (of bidegree $(1,1)$) satisfying $d^2 = 0$. We assume that
\[ 
  N^i \colon V^{-i,j} \to V^{i,j}
  \quad \text{and} \quad 
  L^j \colon V^{i,-j} \to V^{i,j}
\]
are isomorphisms for $i,j\geq 0$.

Suppose moreover that $V$ is endowed with a pairing $\langle, \rangle$ such that $V^{i,j}$ and $V^{i',j'}$ are orthogonal unless $i+i' = 0 = j+j'$ and inducing a perfect pairing betwen $V^{i,j}$ and $V^{-i,-j}$. We assume that $\langle, \rangle$ satisfies the relations
\[ 
  \langle x, y \rangle = \pm \langle y, x \rangle, 
  \quad
  \langle \square x, y \rangle = \pm \langle x, \square y \rangle
  \quad\text{for} \quad
  \square \in \{ N, L, d\}
\]
for homogenous elements $x,y\in V$, where the unspecified signs depend on their bigradings.

We also make the following positivity assumption: set 
\[
  {}_0 V^{-i,-j} = \ker N^{i+1}\cap \ker L^{j+1} \subseteq V^{-i,-j}
  \quad \text{for} \quad 
  i,j\geq 0.
\]
Then the pairing
\[ 
  \langle - , N^i L^j - \rangle 
  \quad \text{on} \quad
  {}_0 V^{-i,-j} \times {}_0 V^{-i,-j} 
\]
is either positive or negative definite, again depending on the bigrading.

If we fix an integer $n$ and suppose that $V^{i,j} = 0$ if $i+j+n$ is odd. If we regard each $V^{i,j}$ as a pure Hodge structure of weight $i+j+n$ purely of type $((i+j+n)/2, (i+j+n)/2)$, then $V\otimes \CC$ becomes a polarized differential bigraded module of type H--L (Hodge--Lefschetz) of weight $n$ as defined in \cite[4.2.1]{SaitoMHP}. To indicate the (implicit) use of such structures in \cite{Ito}, we will call such objects $V=(V, N, L, d, \langle,\rangle, n)$ \emph{Ito modules} (of weight $n$).

We can then apply \cite[4.2.2]{SaitoMHP} to obtain the following result.

\begin{thm} \label{thm:HV-is-Ito}
  Let $V$ be an Ito module of weight $n$, and set $H(V) = \ker d /\im d$. Then $N$, $L$ descend to $H(V)$, the pairing $\langle, \rangle$ induces a pairing on $H(V)$. Endowed with those and with zero differential $d$, $H(V)$ is again an Ito module of weight $n$.
\end{thm}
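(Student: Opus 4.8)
The plan is to recognize the data of an Ito module as precisely the rational incarnation of a polarized differential bigraded module of Hodge--Lefschetz type, so that the assertion becomes a direct translation of M.~Saito's general result \cite[4.2.2]{SaitoMHP}. First I would record the part of the conclusion that is purely formal and holds over $\QQ$ with no extra input. Since $N$, $L$, and $d$ pairwise commute and $d^2=0$, the operators $N$ and $L$ preserve both $\ker d$ and $\im d$, hence descend to endomorphisms of $H(V)=\ker d/\im d$ of bidegrees $(2,0)$ and $(0,2)$. The pairing descends as well: for $x,y\in\ker d$ the adjunction relation $\langle dz,y\rangle = \pm\langle z,dy\rangle = 0$ shows that $\im d$ pairs trivially with $\ker d$, so $\langle [x],[y]\rangle := \langle x,y\rangle$ is well defined on $H(V)$ and inherits the symmetry and the $N$-, $L$-adjunction relations. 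Finally $H(V)^{i,j}=0$ for $i+j+n$ odd, and each $H(V)^{i,j}$ carries the Hodge structure induced from $V^{i,j}$.

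What does \emph{not} follow formally — and what I expect to be the crux — is that passing to $d$-cohomology preserves (i) the hard Lefschetz isomorphisms $N^i\colon H(V)^{-i,j}\isomto H(V)^{i,j}$ and $L^j\colon H(V)^{i,-j}\isomto H(V)^{i,j}$, (ii) non-degeneracy of the descended pairing, and (iii) the definiteness of $\langle -, N^iL^j-\rangle$ on the bi-primitive parts ${}_0H(V)^{-i,-j}$. A priori, taking $\ker d/\im d$ can destroy both the Lefschetz isomorphisms and non-degeneracy. The key is to tensor with $\CC$ and feed the Kähler package. Equipping each $V^{i,j}$ with the prescribed weight-$(i+j+n)$ Hodge structure of Hodge--Tate type makes $V\otimes\CC$ a polarized differential bigraded module of H--L type of weight $n$ in the sense of \cite[4.2.1]{SaitoMHP}. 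Here one must check that the orthogonality, the adjunction signs, and above all the sign of $\langle -, N^iL^j-\rangle$ on ${}_0V^{-i,-j}$ reproduce Saito's polarization axiom. This match is the real content of the setup: for Hodge--Tate type the Weil operator acts trivially, so Saito's Hermitian polarization positivity is exactly the definiteness of the rational symmetric form in the Ito positivity axiom, and the bidegree--sign conventions have been arranged to agree. Saito's theorem \cite[4.2.2]{SaitoMHP} then asserts that $H(V\otimes\CC)$, with the induced $N$, $L$, pairing, and zero differential, is again such a module; internally this runs through the $\mathfrak{sl}_2\times\mathfrak{sl}_2$-action generated by $N$ and $L$, the Lefschetz decomposition into bi-primitive parts, and the positivity, which yields a canonical harmonic splitting $\ker d = \ker\Delta\oplus\im d$ on which the whole structure is manifestly preserved.

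Finally I would descend back to $\QQ$. Flatness of $\CC$ over $\QQ$ gives $H(V\otimes\CC)=H(V)\otimes\CC$, and all structure maps ($N$, $L$, $\langle,\rangle$, and the $\QQ$-lattice) are already defined over $\QQ$. Being an isomorphism and being a perfect pairing are detected after the faithfully flat base change $\QQ\to\CC$, and definiteness of a rational symmetric form is detected after base change to $\RR$; hence conclusions (i)--(iii) for $H(V\otimes\CC)$ descend verbatim to $H(V)$. Together with the formal statements of the first paragraph, this shows that $(H(V),N,L,0,\langle,\rangle,n)$ is again an Ito module of weight $n$. The one genuine obstacle is the convention-matching in the middle step: once one verifies that the normalization above reproduces Saito's polarization data exactly, the theorem is an immediate application of \cite[4.2.2]{SaitoMHP}.
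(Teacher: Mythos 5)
Your proof is correct and takes essentially the same route as the paper: the definition of an Ito module was set up precisely so that $V\otimes\CC$, with each $V^{i,j}$ viewed as a Hodge structure of Hodge--Tate type, is a polarized differential bigraded module of Hodge--Lefschetz type in the sense of \cite[4.2.1]{SaitoMHP}, and the paper obtains the theorem by directly invoking \cite[4.2.2]{SaitoMHP}. The details you supply --- the formal descent of $N$, $L$, and the pairing to $\ker d/\im d$, and the faithfully flat descent of the Lefschetz isomorphisms, non-degeneracy, and definiteness from $\CC$ (resp.\ $\RR$) back to $\QQ$ --- are exactly the points the paper leaves implicit.
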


\subsection{Recall on algebraic cycles}
\label{ss:alg-cycles}

Let $Z$ be a smooth and proper variety over $k$ and let $H^*$ be a Weil cohomology theory. We denote by $C^i(Z)$ the $\QQ$-vector space of codimension $i$ cycles on $Z$, by 
\[ 
  {\rm cl}_Z \colon C^i(Z) \to H^{2i}(Z)
\]
the cycle class map, by $A^i(Z) \subseteq H^{2i}(Z)$ its image, and finally by $C^i_{\rm num}(Z)$ the quotient by $C^i(Z)$ by the group of cycles numerically equivalent to zero, i.e.\ pairing trivially with the entire $C^{n-i}(Z)$ where $n=\dim Z$. We say that \emph{$H^*(Z)$ is generated by algebraic cycles} if $H^i(Z)=0$ for $i$ odd and if $A^i(Z)$ generates $H^{2i}(Z)$ as a vector space over $F=H^0({\rm pt})$ for all $i$. If this is the case, then homological and numerical equivalence on $Z$ agree, i.e.\ $A^i(Z)\isomto C^i_{\rm num}(Z)$, and moreover $A^i(Z)\otimes F\isomto H^{2i}(Z)$ \cite[Proposition~3.6]{Kleiman}. The \emph{Hodge standard conjecture} is the statement that, for an ample line bundle $\mathcal{L}$ on $Z$, the restriction of the induced pairing $\langle , \rangle$ on $H^{2i}(Z)$ to the primitive algebraic classes $P^{2i}(Z)\cap A^i(Z)$ is positive definite for $i$ even and negative definite for $i$ odd \cite[\S 3]{Kleiman}.

\subsection{Varieties of combinatorial type}
\label{ss:comb-type}

Consider now a strictly semistable log scheme $Y$ over $k$, purely of dimension $n$, endowed with an ample line bundle $\mathcal{L}$. We fix a cohomology theory $H^*$ as in \S\ref{ss:weight-ss}. Suppose that each connected component $Z$ of every $Y^{(k)}$ satisfies the following two conditions:
\begin{enumerate}
  \item[(A)] $H^*(Z)$ is generated by algebraic cycles, 
  \item[(B)] $Z$ endowed with $\mathcal{L}|_Z$ satisfies the Hodge standard conjecture.  
\end{enumerate}
Consider the bigraded $\QQ$-vector space
\[ 
  V = \bigoplus_{i,j\in \ZZ} V^{i,j},
  \quad
  V^{i,j} =
  \begin{cases} 
    \bigoplus_{k\geq 0} A^{i-k + \frac{n-i+j}{2}}(Y^{(2k-i+1)}) & n-i+j \text{ even} \\
    0 & n-i+j \text{ odd}.
  \end{cases}
\]
By (A), this is a $\QQ$-structure for the $E_1$-page of the weight spectral sequence for $Y$:
\[ 
  V^{i,j} \otimes F \isomto E_1^{i,n-i+j}.
\]
The maps $N$, $L$, and $d$ on the $E_1$-page are rational with respect to this $\QQ$-structure and induce familiar operators on $V$, defined exactly as in the recipe in \eqref{ss:weight-ss}. Consequently, $\ker d / \im d$ on $V$ is similarly a $\QQ$-structure for the $E_2$-page. 

Moreover, Poincar\'e duality identifies $V^{-i,-j}$ with the dual of $V^{i,j}$, yielding a pairing $\langle, \rangle$ on $V$.

\begin{prop} \label{prop:V-is-ito}
  In the above situation, $V$ is an Ito module of weight $n$.
\end{prop}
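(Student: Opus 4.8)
The plan is to verify the defining axioms of an Ito module (\S\ref{ss:ito-modules}) one at a time, observing first that most of the structure is already in place: $V$ is by construction a $\QQ$-structure on the $E_1$-page, the operators $N$, $L$, $d$ are rational and restrict to it, and the pairing $\langle,\rangle$ is induced by Poincar\'e duality (hence takes values in $\QQ$, being a combination of intersection numbers). The vanishing $V^{i,j}=0$ when $n+i+j$ is odd is immediate from the definition, and the relation $d^2=0$ together with the commutativity of $N$, $L$, $d$ follows at once from $\rho^2=\tau^2=0$, $\rho\tau+\tau\rho=0$ and the commutation of $L$ and $N$ with $\rho$ and $\tau$ recorded in \S\ref{ss:weight-ss}.

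For the pairing axioms I would argue componentwise. Each $Y^{(p)}$ is smooth and projective, and by (A) its cohomology is generated by algebraic cycles, so homological and numerical equivalence agree on $Y^{(p)}$ and the intersection pairing is perfect on $A^\bullet(Y^{(p)})$. Assembling these over the components appearing in $V^{i,j}$ realizes $V^{-i,-j}$ as the dual of $V^{i,j}$ and gives the orthogonality of $V^{i,j}$ and $V^{i',j'}$ unless $(i',j')=(-i,-j)$. The graded symmetry $\langle x,y\rangle=\pm\langle y,x\rangle$ is that of Poincar\'e duality, and the adjunction $\langle\square x,y\rangle=\pm\langle x,\square y\rangle$ holds for $\square=L$ (self-adjointness of the Lefschetz operator under Poincar\'e duality), for $\square=N$ (which is, up to sign, the identity on matching summands), and consequently for $\square=d=\rho+\tau$ via the adjunction between $\rho$ and $\tau$ of \cite[(5.1)]{Ito}.

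Next come the two hard Lefschetz conditions. The isomorphism $N^i\colon V^{-i,j}\isomto V^{i,j}$ is essentially tautological: $N$ is, up to sign, the identity on matching summands and raises the index $k$ by one, so $N^i$ identifies the summand of $V^{-i,j}$ at index $k$ bijectively with that of $V^{i,j}$ at index $k+i$. The isomorphism $L^j\colon V^{i,-j}\isomto V^{i,j}$ is deduced summand by summand: on the part supported on $Y^{(p)}$ the operator $L^j$ is exactly the Lefschetz map $H^{d-j}(Y^{(p)})\to H^{d+j}(Y^{(p)})$ with $d=\dim Y^{(p)}$, an isomorphism by hard Lefschetz for the Weil cohomology $H^*$ on smooth projective varieties; assumption (A) ensures $L^j$ preserves the algebraic part, so the isomorphism descends to the $\QQ$-structure.

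The only genuinely delicate point, and where I expect the real work to lie, is the positivity axiom. I would first compute the bi-primitive space ${}_0V^{-i,-j}=\ker N^{i+1}\cap\ker L^{j+1}\subseteq V^{-i,-j}$. Since $N$ is diagonal in the summand decomposition and raises $k$ by one, $\ker N^{i+1}$ on $V^{-i,-j}$ is precisely the extreme ($k=0$) summand, supported on $Y^{(i+1)}$; intersecting with $\ker L^{j+1}$ then cuts out the primitive cohomology $P^{n-i-j}(Y^{(i+1)})\cap A^{(n-i-j)/2}(Y^{(i+1)})$. On this single-component space $N^i$ is again the identity and $L^j$ is the honest Lefschetz operator, so the form $\langle -,N^iL^j-\rangle$ is, up to an overall sign governed by the bigrading, the polarization $\langle -,L^j-\rangle$ on primitive algebraic classes of $Y^{(i+1)}$; its definiteness is exactly the content of the Hodge standard conjecture, which holds by assumption (B). The subtlety is not the definiteness itself but tracking the signs so that they match the alternation demanded of a polarized differential bigraded module of type H--L in the sense of \cite[4.2.1]{SaitoMHP}, which is precisely what that formalism (and Ito's use of it in low degree) is built to handle; I would verify the sign on each $(i,j)$ against those conventions. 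With positivity established, all the axioms are in place and $V$ is an Ito module of weight $n$.
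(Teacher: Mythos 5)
Your proposal is correct and takes essentially the same approach as the paper: the paper's proof declares every axiom except positivity to be clear, and for positivity it makes exactly your identification of ${}_0 V^{-i,-j}$ with the primitive algebraic classes of $Y^{(i+1)}$ (the $k=0$ summand, cut out by $\ker N^{i+1}\cap\ker L^{j+1}$) and of the pairing $\langle -, N^i L^j - \rangle$ with $\pm\langle -, L^j - \rangle$, whose definiteness is assumption (B). Your more explicit verification of the ``clear'' axioms (rationality of the operators and pairing, $d^2=0$ from $\rho^2=\tau^2=0$ and $\rho\tau+\tau\rho=0$, the two Lefschetz-type isomorphisms via summand matching and classical hard Lefschetz descended to the $\QQ$-structure by (A)) is consistent with, and fills in, what the paper leaves implicit.
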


\begin{proof}
Everything except the positivity assumption is clear. For the remaining statement, it suffices to note that for $i,j\geq 0$ with $i+j-n$ even, 
\[
  {}_0 V^{-i,-j} = \ker N^{i+1}\cap \ker L^{j+1} \subseteq V^{-i,-j}
\] 
is equal to $P^{(n-i-j)/2}(Y^{(i+1)})$, with the pairing $\langle -, N^i L^j -\rangle$ equal to the pairing $\langle -, L^j - \rangle$ up to sign (note $\dim Y^{i+1} = n-i$), as the latter pairing is positive definite by (B).
\end{proof}

Combining Proposition~\ref{prop:V-is-ito} with Theorem~\ref{thm:HV-is-Ito}, we obtain. 

\begin{cor} \label{cor:log-HL-comb}
  For $Y$ and $L$ satisfying (A) and (B), the log hard Lefschetz conjecture and the weight-monodromy conjecture hold for $Y$.
\end{cor}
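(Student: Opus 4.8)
The plan is to obtain both conjectures at once by reading them off from the Ito-module structure on the $E_2$-page, which is produced by feeding Proposition~\ref{prop:V-is-ito} into Theorem~\ref{thm:HV-is-Ito}. First I would invoke Proposition~\ref{prop:V-is-ito} to conclude that the rational structure $V$ on the $E_1$-page of the weight spectral sequence is an Ito module of weight $n$; here conditions (A) and (B) are precisely what makes the relevant cohomology algebraic and the pairing $\langle -, N^iL^j-\rangle$ definite, so this input is already secured. Then I would apply Theorem~\ref{thm:HV-is-Ito}, the cohomological output of M.\ Saito's theory of Hodge--Lefschetz modules \cite[4.2.2]{SaitoMHP}, to deduce that $H(V)=\ker d/\im d$, equipped with the induced operators $N$, $L$ and pairing, is again an Ito module of weight $n$. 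Since $H(V)$ is a $\QQ$-structure for the $E_2$-page, tensoring with $F$ transports the defining $N$- and $L$-isomorphisms of an Ito module onto $E_2$ itself.

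The remaining task is purely a translation of indices. Under the identification $V^{i,j}\otimes F\isomto E_1^{i,n-i+j}$ and its descent $H(V)^{i,j}\otimes F\isomto E_2^{i,n-i+j}$, the Ito-module isomorphisms $N^r\colon H(V)^{-r,j}\isomto H(V)^{r,j}$ become the maps $N^r\colon E_2^{-r,\,w+r}\to E_2^{r,\,w-r}$: matching $n+r+j=w+r$ forces $j=w-n$, after which the target reads $E_2^{r,\,n-r+j}=E_2^{r,\,w-r}$. These are exactly the maps of the weight--monodromy conjecture, which therefore holds. Dually, the $L$-operator on $H(V)$ descends on $E_2$ to cup product with $c_1(\mathcal{L})$, so the Ito-module isomorphisms $L^r\colon H(V)^{i,-r}\isomto H(V)^{i,r}$ are, up to this identification, the maps~\eqref{eqn:logHL-Weil}; by the $E_2$-degeneration recalled in \S\ref{ss:weight-ss}, their being isomorphisms is equivalent to the log hard Lefschetz conjecture for $(Y,\mathcal{L})$. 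This establishes both statements of the corollary simultaneously.

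I expect no genuine conceptual difficulty in the corollary itself: all of the positivity and Hodge-theoretic content has been isolated into Proposition~\ref{prop:V-is-ito} (via hypotheses (A) and (B)) and into Theorem~\ref{thm:HV-is-Ito}. The only point demanding care is the bookkeeping above---checking that the bidegree conventions for an Ito module (with $N$ of bidegree $(2,0)$ and $L$ of bidegree $(0,2)$) are compatible with the $(a,b)$-indexing of the weight spectral sequence, and that the induced pairing and the attendant signs descend so that the isomorphism criterion \eqref{eqn:logHL-Weil} applies verbatim. Once this index-matching is confirmed, the corollary is immediate.
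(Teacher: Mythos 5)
Your proposal is correct and follows exactly the paper's route: the paper derives the corollary by the same one-line combination of Proposition~\ref{prop:V-is-ito} and Theorem~\ref{thm:HV-is-Ito}, with the index translation between $H(V)^{i,j}$ and $E_2^{i,n-i+j}$ left implicit. Your explicit bookkeeping (identifying $N^r\colon H(V)^{-r,w-n}\to H(V)^{r,w-n}$ with the weight--monodromy maps and $L^r\colon H(V)^{i,-r}\to H(V)^{i,r}$ with the maps of \eqref{eqn:logHL-Weil} via $E_2$-degeneration) is precisely the verification the paper omits, and it checks out.
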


\begin{rmk} \label{rmk:ito}
The second assertion has been obtained by Ito \cite[Proposition~5.1]{Ito}. He proved \cite[\S 4]{Ito} that if $Y$ is a certain special fiber of a model of a rigid variety $X$ uniformized by the Drinfeld upper half space, then the assumptions (A) and (B) are satisfied. This is not interesting from our point of view since such an $X$ is projective. 
\end{rmk}


\section{Hodge symmetry in equal characteristic zero}
\label{s:HS-char-0}

In this section, we let $k$ be a field of characteristic zero.

\begin{prop}[{\cite[Theorem~7.1 and Corollary~7.2]{IKN}}] \label{prop:ikn-relative-log-hodge}
  Let $f\colon X\to S$ be a proper, log smooth, and exact morphism of fs log schemes over $k$, and suppose that each stalk of $\mathcal{M}_S/\cO_S^\times$ is a free monoid. Then the relative log Hodge cohomology sheaves
  \[ 
    R^j f_* \Omega^i_{X/S}
  \]
  are locally free, with formation commuting with base change along every morphism $S'\to S$ of fs log schemes, for all $i,j\geq 0$.
\end{prop}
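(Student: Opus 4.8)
The plan is to deduce both assertions from Grothendieck's theorem on cohomology and base change, after reducing their verification to the fibrewise constancy of the log Hodge numbers. Since $f$ is log smooth, $\Omega^1_{X/S}$ is a locally free $\cO_X$-module of finite rank, and hence so is each $\Omega^i_{X/S}=\bigwedge^i\Omega^1_{X/S}$. The structural hypotheses---exactness of $f$ together with freeness of the monoids $\mathcal{M}_{S,\bar s}/\cO_{S,\bar s}^\times$---ensure, via Kato's local structure theory of log smooth morphisms, that the underlying morphism $\underline f$ is flat and that for each $s\in S$ the log fibre $X_s$ is a proper log smooth log scheme over the log point $s$, with $\Omega^i_{X/S}|_{X_s}\cong \Omega^i_{X_s/s}$; in particular each $\Omega^i_{X/S}$ is flat over $\cO_S$. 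As the statement is local on $S$ and ranks of coherent cohomology are insensitive to field extension, faithfully flat descent along $k\to\CC$ lets me assume $k=\CC$.

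Over $\CC$ I need two inputs. First, that the relative log Hodge-to-de Rham spectral sequence
\[ E_1^{i,j}=R^j f_*\Omega^i_{X/S}\ \Longrightarrow\ \mathbb{H}^{i+j}(X/S,\Omega^\bullet_{X/S}) \]
degenerates at $E_1$ on every fibre; this is the characteristic-zero log Hodge-to-de Rham degeneration. Second, that the log de Rham Betti numbers $s\mapsto \dim \mathbb{H}^n(X_s,\Omega^\bullet_{X_s/s})$ are locally constant on $S$. For the second input I would invoke the comparison of relative log de Rham cohomology with the Betti cohomology of the Kato--Nakayama space: properness makes $X_{\rm log}\to S_{\rm log}$ a topological fibration, so the higher direct images of $\QQ$ are local systems on $S_{\rm log}$ and their ranks---the de Rham Betti numbers of the fibres---are locally constant.

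Granting these, each $h^{i,j}(s)=\dim H^j(X_s,\Omega^i_{X_s/s})$ is upper semicontinuous in $s$ by Grothendieck semicontinuity for the $S$-flat proper sheaf $\Omega^i_{X/S}$, while degeneration forces $\sum_{i+j=n}h^{i,j}(s)=\dim\mathbb{H}^n(X_s,\Omega^\bullet_{X_s/s})$, which is locally constant by the second input. A finite family of upper semicontinuous integer-valued functions with locally constant sum is termwise locally constant, so every $h^{i,j}$ is locally constant; by the cohomology-and-base-change theorem this yields, for the flat proper morphism $\underline f$, that each $R^j f_*\Omega^i_{X/S}$ is locally free and that its formation commutes with base change. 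The passage from scheme-theoretic to fs base change along $S'\to S$ is where exactness re-enters: exactness of $f$ guarantees that the fs fibre product $X\times_S^{\rm fs}S'$ has the expected underlying scheme and relative log differentials, so the fs base-change map is identified with the ordinary one and the conclusion persists.

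I expect the main obstacle to be the logarithmic upgrades of the two classical inputs, together with the bookkeeping of \emph{exactness}. The fibrewise degeneration requires the logarithmic Deligne--Illusie theorem (or a transcendental proof via the mixed Hodge theory of $X_{\rm log}$), and the de Rham local constancy requires setting up the relative Kato--Nakayama fibration and the log de Rham comparison in families with care near the nontrivial strata of the log structure. Exactness is the most delicate hypothesis, being simultaneously responsible for flatness of $\underline f$, for the fibres being genuine log smooth log schemes over log points, and for compatibility of the whole package with fs base change; without it the saturation implicit in $X\times_S^{\rm fs}S'$ can change the underlying space and destroy base change, which is exactly the pathology the statement must rule out.
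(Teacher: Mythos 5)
The first thing to note is that the paper contains no proof of this statement: it is imported verbatim, with attribution, from Illusie--Kato--Nakayama (the cited Theorem~7.1 and Corollary~7.2 of [IKN]), and the text proceeds immediately to Corollary~\ref{cor:hodge-free-formal}. So your attempt can only be compared with the strategy of the cited source, and in outline you do reconstruct it correctly: control the relative log de Rham cohomology topologically through the Kato--Nakayama spaces (this is where properness \emph{and exactness} enter, via the relative rounding theorem of Nakayama--Ogus, which makes $X_{\rm log}\to S_{\rm log}$ a fiber bundle; note that in your text you attribute the fibration to properness alone, which is not enough --- log blow-ups are proper and log smooth but fail this), feed in fibrewise log Hodge--de Rham degeneration, and then use a semicontinuity argument to force each individual $h^{i,j}$ to be locally constant.

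There is, however, a load-bearing gap: your claim that exactness of $f$ together with freeness of the stalks of $\mathcal{M}_S/\cO_S^\times$ implies, ``via Kato's local structure theory,'' that $\underline f$ is flat. Kato's flatness theorem for log smooth morphisms requires \emph{integrality}, which is strictly stronger than exactness, even over a free base monoid. For example, the map $h\colon \NN^2 \to Q$, $Q=\{(x,y,z)\in\NN^3 : z\le x+y\}$, $h(m,n)=(m,n,m+n)$, is exact but not integral, and the induced toric morphism $\Spec \CC[Q]\to \Aff^2$ is log smooth over the free base monoid $\NN^2$ while its fiber dimension jumps from $1$ to $2$ over the origin, so it is not flat. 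Flatness is a local issue, so no appeal to properness can repair this local justification; yet without $S$-flatness of the sheaves $\Omega^i_{X/S}$, the entire Grothendieck semicontinuity and cohomology-and-base-change scaffolding that your proof funnels through is unavailable. Establishing flatness under exactly these hypotheses (or circumventing it, as the argument in [IKN] must) is genuine work, not a formal consequence. The same conflation of exactness with integrality undermines your final paragraph: it is integrality (roughly, \emph{universal} exactness) that guarantees the fine/fs fiber product $X\times_S^{\rm fs}S'$ has the expected underlying scheme, whereas for an exact non-integral morphism some base change already alters the underlying scheme at the level of charts. A smaller but real defect: the reduction to $k=\CC$ is not ``faithfully flat descent along $k\to\CC$'' --- no embedding of an arbitrary characteristic-zero field into $\CC$ need exist --- but a spreading-out/Lefschetz-principle argument, which in turn requires finite-type hypotheses that the statement does not impose.
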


\begin{cor} \label{cor:hodge-free-formal}
  Let $\cO\simeq k[\![t]\!]$ be a complete dvr with residue field $k$, and let $\mathfrak{X}$ be a~semistable formal scheme over $\cO$, endowed with the standard log structure. Then the relative log Hodge cohomology groups $H^j(\mathfrak{X}, \Omega^i_{\mathfrak{X}/\cO})$ are free $\cO$-modules of finite rank, with formation commuting with the base change to $\Spec k$.
\end{cor}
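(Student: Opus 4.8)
The plan is to descend to the finite-order thickenings of the special fiber, invoke Proposition~\ref{prop:ikn-relative-log-hodge} at each level, and reassemble by a formal-functions argument. Write $\cO_n = \cO/t^{n+1}\cO = k[t]/(t^{n+1})$ and let $S_n = \Spec\cO_n$, endowed with the log structure associated to $1\mapsto t\colon \NN\to\cO_n$; thus $S_0 = s$ is the standard log point and each closed immersion $S_n\hookrightarrow S_{n+1}$ is strict. Set $\mathfrak{X}_n = \mathfrak{X}\times_\cO\cO_n$ with the pulled-back log structure and let $f_n\colon \mathfrak{X}_n\to S_n$ be the projection. Since $\mathfrak{X}$ is proper and semistable over $\cO$, each $f_n$ is a proper, log smooth morphism of fs log schemes over $k$, and it is exact: on the semistable chart $1\mapsto (1,\dots,1)\colon\NN\to\NN^r$ one has $\NN = (\ZZ\to\ZZ^r)^{-1}(\NN^r)$, the map being the diagonal. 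Moreover $\mathcal{M}_{S_n}/\cO_{S_n}^\times$ has stalk the free monoid $\NN$, so the hypotheses of Proposition~\ref{prop:ikn-relative-log-hodge} hold. Finally, log smoothness makes $\Omega^i_{\mathfrak{X}/\cO}$ locally free with formation commuting with the strict base changes above, so $\Omega^i_{\mathfrak{X}/\cO}\otimes_\cO\cO_n\cong\Omega^i_{\mathfrak{X}_n/S_n}$ and in particular $\Omega^i_{\mathfrak{X}_0/S_0} = \Omega^i_{Y/s}$.

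Applying Proposition~\ref{prop:ikn-relative-log-hodge} to each $f_n$, the module $M^{i,j}_n := H^j(\mathfrak{X}_n, \Omega^i_{\mathfrak{X}_n/S_n})$ is free over the artinian local ring $\cO_n$, and its formation commutes with base change. Base change along $S_n\hookrightarrow S_{n+1}$ yields $M^{i,j}_{n+1}\otimes_{\cO_{n+1}}\cO_n\isomto M^{i,j}_n$, so the transition maps of the inverse system $(M^{i,j}_n)_n$ are surjective; base change along $s\hookrightarrow S_n$ yields $M^{i,j}_n\otimes_{\cO_n}k\isomto H^j(Y,\Omega^i_{Y/s})$, so every $M^{i,j}_n$ is free of the same rank $h := \dim_k H^j(Y,\Omega^i_{Y/s})$.

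It remains to pass to the limit. By the theorem on formal functions for the proper formal scheme $\mathfrak{X}$ over $\cO$, the sheaf $R^j f_*\Omega^i_{\mathfrak{X}/\cO}$ is coherent and $H^j(\mathfrak{X},\Omega^i_{\mathfrak{X}/\cO})\isomto\varprojlim_n M^{i,j}_n$. An inverse system of finite free modules over the $\cO_n$ with $M^{i,j}_{n+1}\otimes_{\cO_{n+1}}\cO_n\isomto M^{i,j}_n$ and constant rank $h$ admits compatible bases: lift a $k$-basis of $M^{i,j}_0$ and, by Nakayama, lift it step by step to bases of the $M^{i,j}_n$. This identifies the system with $(\cO_n^{\oplus h})_n$, whence $H^j(\mathfrak{X},\Omega^i_{\mathfrak{X}/\cO})\cong\cO^{\oplus h}$ is finite free, and $H^j(\mathfrak{X},\Omega^i_{\mathfrak{X}/\cO})\otimes_\cO\cO_n\isomto M^{i,j}_n$; the case $n=0$ is the asserted compatibility with base change to $\Spec k$.

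The main obstacle is precisely the bridge between the formal and algebraic worlds: one must (i) verify the log-geometric hypotheses of Proposition~\ref{prop:ikn-relative-log-hodge} uniformly at every level---above all exactness of $f_n$ and freeness of $\mathcal{M}_{S_n}/\cO_{S_n}^\times$---so that the IKN result applies, and (ii) justify the comparison $H^j(\mathfrak{X},\Omega^i_{\mathfrak{X}/\cO})\isomto\varprojlim_n M^{i,j}_n$ via the theorem on formal functions for proper formal schemes. Granting these, freeness and the base-change statement fall out of the structure of the inverse system, with no further computation.
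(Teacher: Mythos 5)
Your proposal is correct and follows essentially the same route as the paper, whose proof is the one-line instruction to apply Proposition~\ref{prop:ikn-relative-log-hodge} to the base changes $\mathfrak{X}_n$ over $\Spec \cO/(t^{n+1})$ (noting exactness is automatic). The details you supply---verifying exactness on the semistable chart, freeness of the monoid stalks, the limit comparison via formal functions, and lifting bases by Nakayama---are precisely the steps the paper leaves implicit.
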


\begin{proof}
Apply Proposition~\ref{prop:ikn-relative-log-hodge} to the base change $\mathfrak{X}_n$ of $\mathfrak{X}$ to $S_n = \Spec \cO/(t^{n+1})$ for all $n$ (note that the resulting map is automatically exact).
\end{proof}

\begin{thm}[{Log Hodge symmetry over $\CC$, \cite[Corollary~9.15]{Nakkajima}}] \label{thm:log-HS-char0}
  Let $Y$ be a strictly semistable log variety over $k$ such that $\underline Y$ is projective. Then 
  \[ 
    h^{i,j}(Y) = h^{j,i}(Y) 
    \quad \text{for all} \quad
    i,j \geq 0,
  \]
  where $h^{i,j}(Y) = \dim H^j(Y, \Omega^i_{Y/s})$ are the dimensions of the log Hodge cohomology groups.
\end{thm}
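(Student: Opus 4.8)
The plan is to reduce to the case $k=\CC$ and then derive the symmetry from the log hard Lefschetz theorem (Theorem~\ref{thm:log-HL-char0}) together with log Serre duality, running the argument sketched in the introduction.

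First I would reduce to $k=\CC$. Each group $H^j(Y,\Omega^i_{Y/s})$ is a finite-dimensional $k$-vector space whose dimension is insensitive to field extension, because $\Omega^i_{Y/s}$ is coherent and its cohomology commutes with flat base change. Since the chart, the log structure, and an ample line bundle $\mathcal{L}$ on $\underline Y$ all descend to a finitely generated subfield $k_0\subseteq k$, I may spread $Y$ out over $k_0$, choose an embedding $k_0\hookrightarrow\CC$ (all hypotheses being preserved), and conclude $h^{i,j}(Y)=h^{i,j}(Y_\CC)$. Thus we may assume $k=\CC$.

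Over $\CC$, log Hodge theory equips the log Betti cohomology $H^m:=H^m(\tilde Y_{\rm log},\QQ)$ with a (limit) mixed Hodge structure whose Hodge filtration $F$ is induced, through the comparison $H^m\otimes\CC\simeq H^m(Y,\Omega^\bullet_{Y/s})$ and the degeneration of the log Hodge--de Rham spectral sequence, by the stupid filtration on $\Omega^\bullet_{Y/s}$ (see \cite{Nakkajima,IKN}; cf.\ Proposition~\ref{prop:ikn-relative-log-hodge}). Hence
\[
  h^{i,j}(Y)=\dim{\rm gr}^i_F H^{i+j}.
\]
Cup product with $c_1(\mathcal{L})\in H^2(Y)(1)$ is a morphism of mixed Hodge structures, so $L^r\colon H^{n-r}\to H^{n+r}(r)$ is one as well; by Theorem~\ref{thm:log-HL-char0} it is an isomorphism when $\mathcal{L}$ is ample. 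Since morphisms of mixed Hodge structures are strictly compatible with $F$, the map $L^r$ induces isomorphisms ${\rm gr}^i_F H^{n-r}\isomto{\rm gr}^i_F(H^{n+r}(r))={\rm gr}^{i+r}_F H^{n+r}$ for every $i$.

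Taking $r=n-i-j\geq 0$ (the case $i+j>n$ reducing to $i+j<n$ by log Serre duality) and reading off dimensions gives $h^{i,j}(Y)=\dim{\rm gr}^{n-j}_F H^{2n-i-j}=h^{n-j,n-i}(Y)$. Finally, log Serre duality $H^j(Y,\Omega^i_{Y/s})^\vee\simeq H^{n-j}(Y,\Omega^{n-i}_{Y/s})$ yields $h^{n-j,n-i}(Y)=h^{j,i}(Y)$, so that $h^{i,j}=h^{n-j,n-i}=h^{j,i}$, as desired. The genuinely hard input, log hard Lefschetz over $\CC$, is already available as Theorem~\ref{thm:log-HL-char0}, so the one point requiring care is the strict compatibility of the Lefschetz operator with the Hodge filtration. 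This rests on the log Hodge-theoretic package --- existence of the mixed Hodge structure on $H^*(\tilde Y_{\rm log},\QQ)$, degeneration of the log Hodge--de Rham spectral sequence, and the fact that $L$ is a morphism of mixed Hodge structures --- after which the symmetry is a formal consequence of strictness and Serre duality, exactly as in the compact K\"ahler case.
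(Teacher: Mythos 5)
Your proof is correct and takes essentially the same route as the paper's: reduce to $k=\CC$, put the mixed Hodge structure on log Betti cohomology, use Nakkajima's log hard Lefschetz (Theorem~\ref{thm:log-HL-char0}) together with strictness of morphisms of mixed Hodge structures and the $E_1$-degeneration of the log Hodge--de Rham spectral sequence from \cite{IKN}, then finish with log Serre duality. The only cosmetic differences are that you spell out the spreading-out/Lefschetz-principle reduction to $\CC$ and the $i+j>n$ case, both of which the paper leaves implicit.
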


\begin{proof}
For the reader's convenience, we recall the proof. We may assume that $k=\CC$ and that $\underline Y$ is purely of dimension $n$. The log Betti cohomology $H^*(Y)$, endowed with $W_\bullet = $ the abutment filtration of the weight spectral sequence and ${\rm Fil}^\bullet =$ the Hodge filtration on $H^*(Y)\otimes \CC \simeq H^*(Y, \Omega^\bullet_{Y/s})$ is a mixed Hodge structure \cite{SteenbrinkLog}. For an ample line bundle $\mathcal{L}$ on $\underline Y$, the isomorphism from Theorem~\ref{thm:log-HL-char0}
\[ 
  L^r \colon H^{n-r}(Y) \isomto H^{n+r}(Y)(r)
\]
is a map of mixed Hodge structures, and hence strictly compatible with the Hodge filtrations. By \cite{IKN}, the spectral sequence
\[ 
  E_1^{i,j} = H^j(Y, \Omega^i_{Y/s}) 
  \quad \Rightarrow \quad 
  H^{i+j}(Y, \Omega^\bullet_{Y/s}) \simeq H^{i+j}(Y)\otimes \CC
\]
degenerates at $E_1$. Combining these, we obtain for $i+j=n-r$ an isomorphism
\[ 
  L^r \colon H^j(Y, \Omega^i_{Y/s}) = {\rm gr}^j H^{n-r}_{\rm dR}(Y/s)
  \isomto {\rm gr}^{j+r} H^{n+r}_{\rm dR}(Y/s) = H^{r+j}(Y, \Omega^{n-j}_{Y/s}).
\]
By log Serre duality \cite[(2.21)]{Tsuji}, the latter is dual to $H^{i}(Y, \Omega^{j}_{Y/s})$.
\end{proof}

\begin{thm} \label{thm:HS-char-0}
  Let $X$ be a smooth and proper rigid space over a complete discretely valued field $K\simeq k(\!(t)\!)$ with residue field $k$ of characteristic zero, admitting a formal model $\mathfrak{X}$ over $\cO_K\simeq k[\![t]\!]$ whose special fiber $Y = \mathfrak{X}_k$ is projective. Then 
  \[ 
    h^{i,j}(X) = h^{j,i}(X) 
    \quad \text{for all} \quad
    i,j \geq 0.
  \]
\end{thm}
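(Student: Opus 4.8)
The plan is to reduce to a strictly semistable model with projective special fiber and then to read off the Hodge numbers of $X$ from the log Hodge numbers of that special fiber, where Theorem~\ref{thm:log-HS-char0} applies. The first step is therefore semistable reduction. Since $k$ has characteristic zero, after a finite totally ramified base change $K' = k(\!(t^{1/e})\!)$ and an admissible modification of $\mathfrak{X}\otimes_{\cO_K}\cO_{K'}$ one obtains a strictly semistable formal model of $X_{K'}$. The point to check is that its special fiber stays projective: the base change $\cO_K\to\cO_{K'}$ does not change the residue field, so the special fiber of $\mathfrak{X}\otimes_{\cO_K}\cO_{K'}$ is again $Y$, and the modification may be taken to be a composition of admissible blow-ups, hence projective, so the new special fiber is projective over the projective $k$-scheme $Y$ and therefore projective. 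Passing from $K$ to the finite separable extension $K'$ leaves the Hodge numbers unchanged, since flat base change gives $H^j(X_{K'}, \Omega^i_{X_{K'}/K'}) \cong H^j(X, \Omega^i_{X/K})\otimes_K K'$. We may thus assume from the outset that $\mathfrak{X}$ is strictly semistable with $Y = \mathfrak{X}_k$ projective.

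Under this assumption I would establish the chain of equalities
\[
  h^{i,j}(X) = \operatorname{rk}_{\cO_K} H^j(\mathfrak{X}, \Omega^i_{\mathfrak{X}/\cO_K}) = h^{i,j}(Y) = h^{j,i}(Y) = \operatorname{rk}_{\cO_K} H^i(\mathfrak{X}, \Omega^j_{\mathfrak{X}/\cO_K}) = h^{j,i}(X).
\]
Here Corollary~\ref{cor:hodge-free-formal} provides that each $H^j(\mathfrak{X}, \Omega^i_{\mathfrak{X}/\cO_K})$ is free over $\cO_K$ with base change to $\Spec k$ equal to $H^j(Y, \Omega^i_{Y/s})$; this yields the second and fourth equalities, identifying the rank of the free module with the log Hodge number of $Y$. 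The third equality is exactly Theorem~\ref{thm:log-HS-char0}, applicable since $Y$ is strictly semistable with $\underline Y$ projective. For the two outer equalities, note that the log structure on $\mathfrak{X}$ is trivial over the rigid generic fiber $X$, so $\Omega^i_{\mathfrak{X}/\cO_K}$ restricts to the sheaf $\Omega^i_{X/K}$ on $X$; since $\mathfrak{X}$ is proper, the comparison between coherent cohomology on a proper formal scheme and on its rigid generic fiber gives $H^j(\mathfrak{X}, \Omega^i_{\mathfrak{X}/\cO_K})\otimes_{\cO_K} K \cong H^j(X, \Omega^i_{X/K})$, so the rank of the free module equals $h^{i,j}(X)$. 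Chaining the five equalities proves $h^{i,j}(X)=h^{j,i}(X)$.

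The step I expect to be the main obstacle is the semistable reduction with projective special fiber. Semistable reduction itself is available in residue characteristic zero, but some care is required to keep the special fiber projective through the base change and the modification; this is precisely the feature distinguishing ``projective reduction'' from mere ``proper reduction'' and is what is needed to invoke Theorem~\ref{thm:log-HS-char0}. The remaining ingredients — base change for the free modules $H^j(\mathfrak{X}, \Omega^i_{\mathfrak{X}/\cO_K})$ and the comparison of formal and rigid coherent cohomology — are standard and require no further input.
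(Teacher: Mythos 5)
Your proposal is correct and follows essentially the same route as the paper's proof: reduce by semistable reduction to a strictly semistable model whose special fiber stays projective (using that the modification is projective over the projective $Y$), then combine Corollary~\ref{cor:hodge-free-formal} with log Hodge symmetry (Theorem~\ref{thm:log-HS-char0}) and the formal--rigid comparison. The only presentational difference is the order of operations: the paper first resolves $\mathfrak{X}$ to a regular model with SNC special fiber (so that the ramification index $m$ can be read off from the multiplicities of the components), then base changes and resolves the resulting toroidal singularities, whereas you base change first with an unspecified $e$ and then modify --- but the mechanism for preserving projectivity of the special fiber is identical, and you make explicit two points the paper leaves implicit (invariance of Hodge numbers under the extension $K'/K$ and the identification $\operatorname{rank} H^j(\mathfrak{X},\Omega^i_{\mathfrak{X}/\cO_K}) = h^{i,j}(X)$).
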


\begin{proof} 
By resolution of singularities, after replacing $K$ with a finite extension $K'=k(\!(s)\!)$, $s^m = t$, we can assume that $X$ admits a proper and semistable model over $\cO_K$ whose special fiber is projective. Indeed, if $\mathfrak{X}$ is any proper flat model of $X$ whose special fiber $\mathfrak{X}_k$ is projective, resolution of singularities produces an admissible blow-up $\pi\colon \tilde{\mathfrak{X}}\to \mathfrak{X}$ with $\tilde{\mathfrak{X}}$ regular and $\tilde{\mathfrak{X}}_0$ a divisor with simple normal crossings. Since $\pi$ is a projective morphism, so is the map on special fibers $\pi_0\colon \tilde{\mathfrak{X}}_0 \to \mathfrak{X}_0$, and hence $\tilde{\mathfrak{X}}_0$ is projective. Let $m$ be the lowest common multiple of the multiplicities of the components of $\tilde{\mathfrak{X}}_0$, let $K' = k(\!(s)\!)$, $s^m = t$, and let $\tilde{\mathfrak{X}}' = \tilde{\mathfrak{X}} \otimes_{\cO_K} \cO_{K'}$. Note that the special fiber of $\tilde{\mathfrak{X}}'$ is the reduced closed subscheme of $\tilde{\mathfrak{X}}_0 \otimes_{\cO_K} \cO_{K'}$ and hence is projective. Now $\tilde{\mathfrak{X}}'$ may no longer be regular, but standard arguments (toric resolution of singularities) show that the minimal resolution $\mathfrak{X}\to \tilde{\mathfrak{X}}'$ is semistable over $\cO_{K'}$, and $\mathfrak{X}_0$ is projective for the same reason as before.

Let thus $f\colon \mathfrak{X} \to \operatorname{Spf} \cO_K$ be a proper semistable model of $X$ over $\cO_K$ whose special fiber $Y=\mathfrak{X}_0$ is projective. We endow $\operatorname{Spf} \cO_K$ and $\mathfrak{X}$ with the standard log structures, and $s=\Spec k$ and $Y$ with the induced log structures. This makes the map $f\colon \mathfrak{X}\to \operatorname{Spf} \cO_K$ log smooth and $Y$ into a strictly semistable log scheme over $k$. Let 
\[ 
  H^{i,j} = H^j(\mathfrak{X}, \Omega^i_{\mathfrak{X}/\cO_K}) = R^j f_* \Omega^i_{\mathfrak{X}/\cO_K}
\]
be the relative logarithmic Hodge cohomology groups. By Corollary~\ref{cor:hodge-free-formal}, the $H^{i,j}$ are free $\cO_K$-modules of finite rank, whose formation commutes with base change. Since we have $\operatorname{rank} H^{i,j} = \dim_K H^j(X, \Omega^i_X)$, to prove Hodge symmetry for $X$ it suffices to show the equalities
\[ 
  \dim_\CC H^{i,j}\otimes_{\cO_K} k = \dim_\CC H^{j,i}\otimes_{\cO_K} k.
\]
By the base change property, we have 
\[ 
  H^{i,j}\otimes_{\cO_K} k = H^j(Y, \Omega^i_{Y/s}).
\]
Therefore Hodge symmetry for $X$ is equivalent to log Hodge symmetry for $Y = \mathfrak{X}_0$, which holds by Theorem~\ref{thm:log-HS-char0}.
\end{proof}


\section{\texorpdfstring{Hodge symmetry in the $p$-adic situation}{Hodge symmetry in the p-adic situation}}
\label{s:p-adic}

In this section, we assume that $k$ has characteristic $p>0$ and denote by $H^*$ the log crystalline cohomology $H_{\text{log-cris}}^*(-/W(k))[1/p]$. It is a vector space over $K_0=W(k)[1/p]$ endowed with a Frobenius-linear isomorphism $\varphi$.

\begin{prop}[Log hard Lefschetz implies slope symmetry] \label{prop:log-HL-implies-SS}
  Let $Y$ be a strictly semistable log scheme over $k$ such that $\underline{Y}$ is proper and geometrically connected. Let $0\leq q \leq n=\dim \underline{Y}$ and let $\mathcal{L}$ be an ample line bundle on $\underline{Y}$. Let $\alpha_0\leq \ldots \leq \alpha_{s}$ ($s=\dim H^q(Y)-1$) be the slopes of Frobenius on $H^q(Y)$. If the log hard Lefschetz conjecture holds for $\mathcal{L}$ in degree $q$, then these slopes satisfy
  \[ 
    \alpha_{s - k} = q-\alpha_k.
  \]
\end{prop}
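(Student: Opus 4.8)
The plan is to exploit the interplay between log hard Lefschetz, Poincar\'e duality, and the compatibility $N\varphi = p\varphi N$ to produce a self-duality isomorphism of $F$-isocrystals, and then read off the slope symmetry from it. First I would set $n = \dim \underline{Y}$ and $r = n-q$, and record that the log crystalline cohomology $H^q(Y) = H^q_{\text{log-cris}}(Y/W(k))[1/p]$ is a finite-dimensional $K_0$-vector space with a Frobenius-linear bijection $\varphi$, making it an $F$-isocrystal. The key structural input is Poincar\'e duality in log crystalline cohomology, which furnishes a perfect pairing
\[
  H^q(Y) \times H^{2n-q}(Y) \longrightarrow H^{2n}(Y) \simeq K_0(-n),
\]
compatible with Frobenius, i.e.\ an isomorphism of $F$-isocrystals $H^{2n-q}(Y) \isomto H^q(Y)^\vee(-n)$. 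Combining this with the log hard Lefschetz isomorphism $L^r\colon H^{q}(Y) \isomto H^{2n-q}(Y)(r)$ (which is $\varphi$-equivariant because $c_1(\mathcal{L})$ lies in $H^2(Y)(1)$ and has slope $1$), I obtain an isomorphism of $F$-isocrystals
\[
  H^q(Y) \isomto H^q(Y)^\vee(-q),
\]
after absorbing the Tate twists (note $n - r = q$).

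From this self-duality the slope symmetry is essentially immediate. An isomorphism of $F$-isocrystals $M \isomto M^\vee(-q)$ forces the Newton polygon of $M$ to be symmetric about the line of slope $q/2$: if $\lambda$ is a slope of $M$ occurring with multiplicity $m$, then $-\lambda$ is a slope of $M^\vee$ with multiplicity $m$, so $q - \lambda$ is a slope of $M^\vee(-q)$ with multiplicity $m$, and hence $q-\lambda$ is a slope of $M$ with multiplicity $m$. Writing the slopes in increasing order as $\alpha_0 \leq \cdots \leq \alpha_s$, the map $\lambda \mapsto q-\lambda$ reverses the order, so it must send $\alpha_k$ to $\alpha_{s-k}$, giving exactly
\[
  \alpha_{s-k} = q - \alpha_k.
\]
The cleanest way to phrase the multiplicity bookkeeping is via the slope (Newton) decomposition $M = \bigoplus_\lambda M_\lambda$ over a suitable extension, where $\dim M_\lambda$ equals the multiplicity of $\lambda$; the isomorphism $M \isomto M^\vee(-q)$ then identifies $M_\lambda$ with $M_{q-\lambda}$.

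I expect the main obstacle to be verifying the Frobenius-compatibility of all the maps involved, rather than the formal slope argument. Specifically I would need to check that Poincar\'e duality in log crystalline cohomology is genuinely a morphism of $F$-isocrystals with the stated Tate twist (so that slopes on $H^{2n}(Y)$ really contribute the normalization giving $M^\vee(-n)$), and that the Chern class $c_1(\mathcal{L})$ is a Frobenius eigenvector of slope $1$ so that cup product with $c_1(\mathcal{L})^r$ shifts slopes by exactly $r$. These are standard properties of the Hyodo--Kato / Mokrane formalism, but they are where the care is required; once they are in hand, the identity $\alpha_{s-k} = q-\alpha_k$ drops out purely formally from the self-duality of the underlying $F$-isocrystal. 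It is worth noting that the monodromy operator $N$ plays no role in this particular statement, since we are only tracking Frobenius slopes and not the full $(\varphi,N)$-module structure.
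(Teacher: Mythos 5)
Your proposal is correct and matches the paper's own argument essentially step for step: both combine the Frobenius-equivariance of $L^{n-q}$ (coming from $\varphi$ acting by $p$ on $c_1(\mathcal{L})$ and by $p^n$ on the fundamental class) with Poincar\'e duality to produce an isomorphism of $F$-isocrystals $H^q(Y) \isomto H^q(Y)^\vee(-q)$, and then read off $\alpha_{s-k} = q - \alpha_k$ from the resulting order-reversing symmetry of the slopes.
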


\begin{proof}
Since the Frobenius on $H^2(Y)$ multiplies $c_1(\mathcal{L})\in H^2(Y)$ by $p$ and the fundamental class in $H^{2n}(Y)$ by $p^n$, the Lefschetz operator induces a Frobenius-equivariant map
\[ 
  L^{n-q} \colon H^q(Y) \to H^{2n-q}(Y)(n-q) = \operatorname{Hom}(H^q(Y), H^{2n}(Y))(n-q) = H^q(Y)^\vee(-q),
\]
where $(-)(r)$ means crystalline Tate twist i.e.\ replacing the Frobenius $\varphi$ with $p^{-r}\varphi$. The slopes of $H^q(Y)^\vee$ are $-\alpha_{s}\leq \ldots \leq -\alpha_0$, and hence the slopes of the target are $q-\alpha_{s} \leq \ldots \leq q-\alpha_0$. Since $L^{n-q}$ is an isomorphism, we have $\alpha_{s-k} = q-\alpha_k$.   
\end{proof}

\begin{rmk} \label{rmk:suh}
Let $Y$ be a smooth and proper variety over a finite field $k$. By looking at the characteristic polynomial on Frobenius on the cohomology of $Y$, Suh \cite{Suh} has proved that the slopes of Frobenius on $H^q(Y)$ satisfy the assertion of Proposition~\ref{prop:log-HL-implies-SS}, even if $Y$ is not projective. 

Indeed, if $k=\FF_{p^e}$ and $\beta$ is an eigenvalue of $\varphi^e$ on $H^q(Y)$, then it is an algebraic integer such that
\[ 
  \beta \bar \beta = |\beta|^2 = p^{eq}, 
\] 
and hence $p^{eq}/\beta = \bar\beta$ is an eigenvalue as well. The slopes $\alpha_k$ equal the $p$-adic valuations of such roots $\beta$ multiplied by $1/e$, and hence their multiset is closed under $\alpha \mapsto q - \alpha$. (This observation appeared earlier in \cite[VI \S 3]{Ekedahl}.)

In particular, if $K$ is a finite extension of $\QQ_p$, then Theorem~\ref{thm:HS-p-adic}(\ref{thm:HS-good-ordinary}) below holds without the assumption that $Y$ is projective. We thank Alexander Petrov for this remark.
\end{rmk}

For the next result, we need to recall some $p$-adic Hodge theory \cite{Fontaine}. The $K_0$-vector space $H^q(Y) = H^q_{\text{log-cris}}(Y/W(k))[1/p]$ come equipped, in addition to the Frobenius $\varphi$, with a nilpotent operator $N$ satisfying $N\varphi = p\varphi N$. Further, the Hyodo--Kato isomorphism \cite[Theorem~5.1]{HyodoKato} (depending on a choice of a uniformizer of $\cO_K$)
\[ 
  H^q_{\text{log-cris}}(Y/W(k))[1/p] \otimes_{K_0} K \isomto H^q_{\rm dR}(X/K)
\]
endows $H^q(Y)\otimes K$ with a decreasing separated and exhaustive filtration ${\rm Fil}^\bullet$, obtained from the Hodge filtration on de Rham cohomology of $X$. The data $(\varphi, N, {\rm Fil}^\bullet)$ makes $H^q(Y)$ into a \emph{filtered $(\varphi, N)$-module} \cite[\S 4.3.2]{Fontaine}. 

For a filtered $(\varphi, N)$-module $D=(D, \varphi, N, {\rm Fil}^\bullet)$, one defines 
\[
  t_N(D) = \sum_{\alpha\in \QQ} (\dim_{K_0} D_\alpha)\cdot \alpha
  \quad \text{and} \quad
  t_H(D) = \sum_{i\in \ZZ} (\dim_K {\rm Fil}^i D/{\rm Fil}^{i+1} D)\cdot i,
\] 
where $D_\alpha\subseteq D$ is the part of slope $\alpha\in \QQ$. We say that $D$ is \emph{weakly admissible} \cite[\S 4.4.1]{Fontaine} if $t_N(D) = t_H(D)$ and $t_N(D')\geq t_H(D')$ for every subobject $D'\subseteq D$. It is a~consequence of $p$-adic Hodge theory (\cite{ColmezNiziol} in the case of formal schemes) that $H^q(Y)$ is weakly admissible.

\begin{prop} \label{prop:linear-rel-hij}
  Let $\mathfrak{X}$ be a strictly semistable proper formal scheme over $\cO_K$ of relative dimension $n$. Let $Y = \mathfrak{X}_k$ be its special fiber, endowed with the natural log structure, and let $X = \mathfrak{X}_K$ be its rigid analytic generic fiber. Let $0\leq q\leq n$, and suppose that $\underline{Y}$ admits an ample line bundle $\mathcal{L}$ for which the log hard Lefschetz holds in degree $q$. Then
  \[ 
    \sum_{i+j = q} (i-j) \cdot h^{i,j}(X)= 0.
  \]
  In particular, if $q\leq 2$ or $q\geq 2n-2$, then 
  \[ 
    h^{i,j}(X) = h^{j,i}(X) 
    \quad\text{for}\quad
    i+j =q.
  \]
\end{prop}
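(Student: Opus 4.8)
The plan is to extract the equation $\sum_{i+j=q}(i-j)h^{i,j}(X)=0$ directly from the weak admissibility of the filtered $(\varphi,N)$-module $H^q(Y)$, combining it with the slope symmetry furnished by Proposition~\ref{prop:log-HL-implies-SS}. The two numerical invariants $t_N$ and $t_H$ are the natural bridge: $t_H$ is manifestly a linear combination of the Hodge numbers, while $t_N$ is controlled by the Frobenius slopes, which the hard Lefschetz hypothesis renders symmetric.

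First I would compute $t_H(H^q(Y))$ in terms of Hodge numbers. Via the Hyodo--Kato isomorphism $H^q(Y)\otimes K \isomto H^q_{\rm dR}(X/K)$ and the degeneration of the Hodge-to-de Rham spectral sequence, one has $\dim_K {\rm gr}^i\, {\rm Fil}^\bullet = h^{i,q-i}(X)$, so
\[
  t_H(H^q(Y)) = \sum_{i+j=q} i\cdot h^{i,j}(X).
\]
Next I would evaluate $t_N(H^q(Y)) = \sum_k \alpha_k$, the sum of the Frobenius slopes on $H^q(Y)$. By Proposition~\ref{prop:log-HL-implies-SS}, these slopes satisfy $\alpha_{s-k} = q - \alpha_k$, so pairing up the $k$-th and $(s-k)$-th slopes gives $\alpha_k + \alpha_{s-k} = q$; summing over all indices yields
\[
  t_N(H^q(Y)) = \tfrac{q}{2}(s+1) = \tfrac{q}{2}\dim H^q(Y) = \tfrac{q}{2}\sum_{i+j=q} h^{i,j}(X),
\]
using that $\dim H^q(Y) = \dim_K H^q_{\rm dR}(X/K) = \sum_{i+j=q} h^{i,j}(X)$. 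Weak admissibility (recalled in the paragraph preceding the statement, via \cite{ColmezNiziol}) gives $t_N = t_H$ on the whole module, and equating the two expressions produces
\[
  \sum_{i+j=q} i\cdot h^{i,j}(X) = \tfrac{q}{2}\sum_{i+j=q} h^{i,j}(X),
\]
which, after writing $q=i+j$ on the right and rearranging, is exactly $\sum_{i+j=q}(i-j)h^{i,j}(X)=0$.

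For the final assertion I would argue as follows. When $q\le 2$ the only pairs $(i,j)$ with $i+j=q$ and $i\ne j$ are $(q,0)$ and $(0,q)$, so the relation reads $q\big(h^{q,0}-h^{0,q}\big)=0$, which for $q\in\{1,2\}$ forces $h^{q,0}=h^{0,q}$ and hence $h^{i,j}=h^{j,i}$ throughout that degree; the case $q=0$ is trivial. The dual range $q\ge 2n-2$ follows by the same count after applying Serre duality $h^{i,j}(X)=h^{n-i,n-j}(X)$ to reduce to $q'=2n-q\le 2$. I expect the only genuine subtlety to lie in confirming that the slope-sum computation of $t_N$ is unaffected by the $N$-operator (it is not, since $t_N$ depends only on the $\varphi$-slopes) and in citing the degeneration and weak admissibility in the precise generality needed; the algebra itself is routine once these inputs are in place.
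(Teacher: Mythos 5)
Your argument is correct and is essentially the paper's own proof: both extract $\sum_{i+j=q} i\cdot h^{i,j}(X) = t_H(H^q(Y)) = t_N(H^q(Y)) = \sum_k \alpha_k$ from weak admissibility (via Hyodo--Kato and Hodge--de Rham degeneration) and then use the slope symmetry $\alpha_{s-k} = q-\alpha_k$ of Proposition~\ref{prop:log-HL-implies-SS} to identify this sum with $\tfrac{q}{2}\dim H^q(Y) = \sum_{i+j=q} j\cdot h^{i,j}(X)$. The only minor divergence is in the ``in particular'' clause, which the paper leaves to the reader: your reduction of $q\geq 2n-2$ to $q'=2n-q\leq 2$ via rigid-analytic Serre duality is valid but imports an extra ingredient, since the vanishing $h^{i,j}(X)=0$ for $\max(i,j)>n$ already restricts the relation in degree $q\geq 2n-2$ to the terms $(n,q-n),\ldots,(q-n,n)$ and yields $(2n-q)\bigl(h^{n,q-n}(X)-h^{q-n,n}(X)\bigr)=0$ directly.
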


\begin{proof}
The weak admissibility of $H^q(Y)$ equipped with $\phi$, $N$, and ${\rm Fil}^\bullet$ implies that, if $\alpha_0\leq \ldots\leq \alpha_{s-1}$ are the Frobenius slopes on $H^q(Y)$, then
\[ 
  \sum_{i+j=q} h^{i,j}(X)\cdot i = t_H(H^q(Y)) = t_N(H^q(Y)) = \sum_{k=0}^{s-1} \alpha_k.  
\]
By Proposition~\ref{prop:log-HL-implies-SS}, this equals $\sum (q-\alpha_k) = q\cdot \dim H^q(Y) - \sum \alpha_k$. But
\begin{align*} 
  q\cdot \dim H^q(Y) - \sum_{k=0}^{s-1} \alpha_k &= q\sum_{i+j=q} h^{i,j}(Y) - \sum_{i+j=q} h^{i,j}(Y)\cdot i \\
  &= \sum_{i+j=q} h^{i,j}(Y)\cdot (q-i) =  \sum_{i+j=q} h^{i,j}(X)\cdot j. 
  \qedhere
\end{align*}
\end{proof}

\begin{defin}[{\cite[Definition~1.4]{PerrinRiouIllusie}}] \label{def:log-ordinary}
  Let $Y$ be a strictly semistable log scheme over $k$ such that $\underline{Y}$ is proper. We denote by $B\Omega^j_{Y/s} \subseteq \Omega^j_{Y/s}$ the sheaf of exact differential forms. We say that $Y$ is \emph{ordinary} if 
  \[ 
    H^i(Y, B\Omega^j_{Y/s}) = 0
    \quad \text{for all} \quad
    i,j \geq 0.
  \]
\end{defin}   

For $\underline{Y}$ is smooth, this is equivalent to the usual definition \cite[\S 7]{BlochKato}.

\begin{lemma}[{\cite[Proposition~1.5]{PerrinRiouIllusie}}] \label{lemma:ordinary-mod-tors}
  Let $Y$ be as in Definition~\ref{def:log-ordinary}. If $Y$ is ordinary, then the $F$-crystals
  \[ 
    H^q_{\text{log-cris}}(Y/W(k))/{\rm tors.} 
  \]
  are ordinary, i.e.\ have the same Newton and Hodge polygon, for all $q\geq 0$.
  The converse holds if $H^q_{\text{log-cris}}(Y/W(k))$ is torsion-free for all $q\geq 0$.
\end{lemma}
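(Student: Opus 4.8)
The plan is to run the standard argument relating ordinarity to the de Rham--Witt complex, in its logarithmic incarnation. The essential tool is the logarithmic de Rham--Witt complex $W\Omega^\bullet_{Y/s}$ of Hyodo \cite{HyodoKato}, which computes $H^*_{\text{log-cris}}(Y/W(k))$ and is equipped with operators $F,V,R,d$ and a log Cartier isomorphism $C^{-1}\colon \Omega^i_{Y/s}\isomto \mathcal H^i(\Omega^\bullet_{Y/s})$. First I would set up the slope spectral sequence
\[
  E_1^{i,j} = H^j(Y, W\Omega^i_{Y/s}) \Rightarrow H^{i+j}_{\text{log-cris}}(Y/W(k)),
\]
and recall that after inverting $p$ the summand $H^{q-i}(Y, W\Omega^i_{Y/s})\otimes_W K_0$ is exactly the part of $H^q(Y)$ whose Frobenius slopes lie in $[i,i+1)$, with the crystalline Frobenius $\varphi$ restricting there to $p^i F$. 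Consequently the $F$-crystal $H^q(Y)/{\rm tors.}$ is ordinary if and only if, for every $i$, this slope-$[i,i+1)$ piece is pure of slope $i$ and of the predicted multiplicity; equivalently, $F$ acts bijectively on $H^{q-i}(Y, W\Omega^i_{Y/s})$ modulo torsion and $\dim_{K_0}H^{q-i}(Y, W\Omega^i_{Y/s})\otimes_W K_0 = h^{i,q-i}$.

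For the forward implication the key is that the hypothesis $H^j(Y, B\Omega^i_{Y/s})=0$ propagates through the whole Cartier tower. Using the log Cartier operator one has, for each $n$, the short exact sequences
\[
  0 \to B\Omega^i_{Y/s} \to B_{n+1}\Omega^i_{Y/s} \xrightarrow{C} B_n\Omega^i_{Y/s} \to 0
\]
and
\[
  0 \to B_n\Omega^i_{Y/s} \to Z_n\Omega^i_{Y/s} \xrightarrow{C^n} \Omega^i_{Y/s} \to 0,
\]
so $H^j(Y, B\Omega^i_{Y/s})=0$ for all $i,j$ forces, by induction on $n$, $H^j(Y, B_n\Omega^i_{Y/s})=0$ and hence $H^j(Y, Z_n\Omega^i_{Y/s})\isomto H^j(Y, \Omega^i_{Y/s})$ for all $n,i,j$. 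Feeding this into the fundamental exact sequences of the de Rham--Witt complex — the $V$-filtration on $W_n\Omega^i_{Y/s}$ whose graded pieces are built from the $B_m$ and $Z_m$ — one finds that the restriction tower $\{H^j(Y, W_n\Omega^i_{Y/s})\}_n$ is Mittag-Leffler with controlled torsion, that $F$ is bijective on $H^j(Y, W\Omega^i_{Y/s})/{\rm tors.}$, and that the latter is free over $W$ of rank $h^{i,j}$. By the slope dictionary of the previous paragraph this says precisely that all slopes of $H^q(Y)/{\rm tors.}$ are integral, with slope $i$ of multiplicity $h^{i,q-i}$, i.e.\ the Newton and Hodge polygons coincide. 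This is the argument of Bloch--Kato \cite[\S 7]{BlochKato} transported to the log de Rham--Witt complex as in \cite{PerrinRiouIllusie}.

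For the converse, suppose in addition that every $H^q_{\text{log-cris}}(Y/W(k))$ is torsion-free and that every $H^q(Y)/{\rm tors.}=H^q(Y)$ is ordinary. Torsion-freeness makes the slope spectral sequence degenerate at $E_1$ on the nose and each $H^j(Y, W\Omega^i_{Y/s})$ a torsion-free $W$-module, so that its $F$-slopes literally account for the Newton polygon; ordinarity then forces $F$ to be bijective on every $H^j(Y, W\Omega^i_{Y/s})$ and forces the equality $\dim_{K_0}H^{q-i}(Y, W\Omega^i_{Y/s})\otimes_W K_0 = h^{i,q-i}$ for all $i,q$. Reading the exact sequences of the second paragraph in reverse — the dimension counts now leaving no room for higher cohomology of the $B_n$ — recovers $H^j(Y, B\Omega^i_{Y/s})=0$ for all $i,j$, i.e.\ $Y$ is ordinary.

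The hard part is the bookkeeping of torsion. The slope spectral sequence degenerates only modulo torsion in general, and the identification of $\dim_{K_0}H^{q-i}(Y, W\Omega^i_{Y/s})\otimes_W K_0$ with the Hodge number $h^{i,q-i}$ relies on the $V$- and $p$-torsion in the Hodge--Witt groups being under control; this is exactly why the lemma phrases the forward direction only for $H^q(Y)/{\rm tors.}$ and imposes torsion-freeness for the converse. A secondary obstacle, purely expository from the present vantage point, is that the Cartier sequences and the de Rham--Witt exact sequences must be invoked in their logarithmic form, for which I would rely on the constructions of Hyodo \cite{HyodoKato} and Perrin-Riou--Illusie \cite{PerrinRiouIllusie}.
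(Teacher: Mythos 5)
The paper offers no proof of this lemma at all: it is quoted verbatim from Illusie's appendix to Perrin-Riou's expos\'e (\cite[Proposition~1.5]{PerrinRiouIllusie}, resting on Hyodo's logarithmic version of the Illusie--Raynaud structure theory), so the only comparison available is with that reference. Your sketch correctly reconstructs its skeleton --- the log de Rham--Witt complex, the slope spectral sequence, and the propagation of $H^j(Y,B\Omega^i_{Y/s})=0$ through the Cartier tower to get bijectivity of $F$ on Hodge--Witt cohomology --- but it has genuine gaps at exactly the two places where polygons enter.

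The decisive gap is the step ``all slopes of $H^q(Y)/{\rm tors.}$ are integral, with slope $i$ of multiplicity [given], i.e.\ the Newton and Hodge polygons coincide.'' This is a non sequitur: knowing the Newton slopes, and even knowing that $F$ is bijective on each graded piece of the slope filtration of the lattice, does not determine the Hodge polygon of the $F$-crystal, which is computed from the elementary divisors of $\varphi$ on the lattice itself. Concretely, $M=W^2$ with
\[
  \varphi=\begin{pmatrix} p^2 & 1 \\ 0 & p \end{pmatrix}
\]
has a sub-$F$-crystal with $\varphi=p^2\cdot\mathrm{id}$ and quotient with $\varphi=p\cdot\mathrm{id}$ (so integral Newton slopes $1,2$ and ``$F$ bijective on the graded pieces''), yet $M/\varphi M\cong W/p^3$, so its Hodge polygon is $(0,3)\neq(1,2)$ and $M$ is \emph{not} ordinary. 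What rules this out for $H^q_{\text{log-cris}}(Y/W(k))/{\rm tors.}$ is an integral structure your argument never invokes: the crystalline Frobenius is the map of complexes equal to $p^iF$ in degree $i$, hence carries the naive filtration ${\rm Fil}^i=\im\bigl(H^q(Y,W\Omega^{\geq i}_{Y/s})\to H^q(Y)\bigr)$ into $p^i\,{\rm Fil}^i$. Using this divisibility, the finite generation of the Hodge--Witt groups (also a consequence of ordinarity), and bijectivity of $F$ on the graded pieces, a successive-approximation argument splits the slope filtration integrally, $H^q/{\rm tors.}\cong\bigoplus_i({\rm gr}^i,\,p^iF_i)$, and only after this splitting do the two polygons coincide. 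This splitting step is the heart of the Illusie--Raynaud argument (in Hyodo's log incarnation); without it the claimed implication is false for abstract $F$-crystals.

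A second gap is that you conflate the two Hodge polygons, and the converse contains a false intermediate claim. The lemma asserts equality of the Newton polygon with the Hodge polygon \emph{of the $F$-crystal} (elementary divisors of $\varphi$); your version, with slope-$i$ multiplicity $h^{i,q-i}=\dim H^{q-i}(Y,\Omega^i_{Y/s})$, is the geometric Hodge polygon. These are different statements: whenever $H^*_{\text{log-cris}}$ has torsion one has $\sum_{i+j=q}h^{i,j}>\operatorname{rank}H^q$, so the two polygons do not even have the same endpoint and your asserted equality $\operatorname{rank}_W H^{q-i}(Y,W\Omega^i_{Y/s})=h^{i,q-i}$ cannot hold --- the lemma is phrased modulo torsion precisely to stay on the $F$-crystal side. (Comparing with geometric Hodge numbers is exactly what requires the additional torsion-freeness and freeness hypotheses in Proposition~\ref{prop:X-Y-ordinary} of the paper.) In the converse direction, the claim that torsion-freeness of $H^*_{\text{log-cris}}$ makes the slope spectral sequence degenerate ``on the nose'' with torsion-free $E_1$-terms is also false: for a supersingular K3 surface the crystalline cohomology is torsion-free while $H^2(X,W\cO_X)$ is not even finitely generated over $W$. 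The converse has to be run through the structure theory (e.g.\ the domino numbers measuring the defect between Newton and abstract Hodge polygons, or a reverse Cartier-tower dimension count), not through naive integral degeneration.
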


\begin{lemma}[{\cite[Proposition~1.6]{PerrinRiouIllusie}}] \label{lemma:Yi-Y-ordinary}
  Let $Y$ be a strictly semistable log scheme over $k$ such that $\underline{Y}$ is proper, and let $Y^{(k)}$ be as defined in \S\ref{ss:semistable-log}. If $Y^{(k)}$ are ordinary for all $k\geq 1$, then so is $Y$.
\end{lemma}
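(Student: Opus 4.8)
The plan is to reduce the vanishing of $H^i(Y, B\Omega^j_{Y/s})$ to the corresponding vanishing on the smooth strata $Y^{(k)}$ by means of a Poincaré residue resolution. Since $Y$ is strictly semistable, I expect the sheaf of relative log differentials $\Omega^j_{Y/s}$ to admit a resolution of the Steenbrink type
\[
  0 \to \Omega^j_{Y/s} \to (a_1)_* \Omega^j_{Y^{(1)}} \to (a_2)_* \Omega^{j-1}_{Y^{(2)}} \to \cdots \to (a_{j+1})_* \Omega^0_{Y^{(j+1)}} \to 0,
\]
whose differentials are the alternating sums of the Poincaré residue maps along the strata $Y_I$ (cf.\ \cite{Nakkajima}); recall each $Y^{(k)}$ is smooth, so $\Omega^\bullet_{Y^{(k)}}$ is the ordinary de Rham complex and $B\Omega^{j'}_{Y^{(k)}}$ its usual sheaf of exact forms. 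First I would recall this resolution and note that, because each $a_k$ is finite, the functor $(a_k)_*$ is exact and satisfies $H^i(\underline{Y}, (a_k)_*\mathcal{F}) = H^i(Y^{(k)}, \mathcal{F})$ for every coherent $\mathcal{F}$.

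Next I would show that this resolution is strictly compatible with the subsheaves of exact forms, i.e.\ that it restricts to an exact sequence
\[
  0 \to B\Omega^j_{Y/s} \to (a_1)_* B\Omega^j_{Y^{(1)}} \to (a_2)_* B\Omega^{j-1}_{Y^{(2)}} \to \cdots \to (a_{j})_* B\Omega^1_{Y^{(j)}} \to 0,
\]
where the term $(a_{j+1})_* B\Omega^0_{Y^{(j+1)}} = 0$ drops out. That the residue maps send $B\Omega$ into $B\Omega$ is immediate, since residues commute with the de Rham differential $d$ (hence carry exact forms to exact forms). The substantive point is the exactness of the restricted sequence: this I would deduce from the fact that the residue maps are \emph{strict} for the conjugate (Cartier) filtration, equivalently that the Cartier operator commutes with Poincaré residues. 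This is a local assertion that can be verified directly on the standard model $\Spec k[x_1, \ldots, x_r]/(x_1 \cdots x_r)$ over $s$, where the log de Rham complex, its subsheaves of closed and exact forms, and the residue maps all admit explicit monomial descriptions. I expect this strictness to be the main obstacle, as it is the only step not formal in nature.

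Granting the restricted resolution, the conclusion is formal. By hypothesis each $Y^{(k)}$ is ordinary, so $H^i(Y^{(k)}, B\Omega^{j'}_{Y^{(k)}}) = 0$ for all $i, j' \geq 0$; combined with the exactness of $(a_k)_*$, every term $(a_k)_* B\Omega^{j-k+1}_{Y^{(k)}}$ of the restricted resolution is cohomologically trivial in all degrees. The hypercohomology spectral sequence of the resolution,
\[
  E_1^{p,q} = H^q\big(\underline{Y},\, (a_{p+1})_* B\Omega^{j-p}_{Y^{(p+1)}}\big) \;\Rightarrow\; H^{p+q}(Y, B\Omega^j_{Y/s}),
\]
thus has a vanishing $E_1$-page, forcing $H^i(Y, B\Omega^j_{Y/s}) = 0$ for all $i$. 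Since $j \geq 0$ was arbitrary, $Y$ is ordinary in the sense of Definition~\ref{def:log-ordinary}.
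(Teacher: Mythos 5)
There is a genuine gap, and it sits at the foundation of your argument: the ``Steenbrink-type resolution'' you start from does not exist as written. Your sequence needs a degree-preserving first map $\Omega^j_{Y/s}\to (a_1)_*\Omega^j_{Y^{(1)}}$ and then degree-dropping differentials $(a_k)_*\Omega^{j-k+1}_{Y^{(k)}}\to (a_{k+1})_*\Omega^{j-k}_{Y^{(k+1)}}$, and neither is defined with the ordinary (non-logarithmic) sheaves of the smooth strata as terms. Restriction of a relative log form to a component acquires log poles along the double locus: already for $Y=\Spec k[x,y]/(x y)$ and $j=1$, the sheaf $\Omega^1_{Y/s}$ is the line bundle generated by $d\log x=-d\log y$, and $d\log x|_{Y_1}=dx/x$ is not a section of $\Omega^1_{Y_1}$. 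Poincar\'e residues, which do drop the degree by one as in your sequence, are only defined on forms with log poles, so they give no maps out of $\Omega^{j-k+1}_{Y^{(k)}}$ either. Your sequence conflates two genuinely different structures from the literature: (i) the \emph{absolute} log differentials $\tilde\Omega^j_Y$ (log structure on $Y$, trivial on the base) carry a weight filtration whose graded pieces are computed by residues, $\mathrm{gr}^W_k\cong (a_k)_*\Omega^{j-k}_{Y^{(k)}}$ for $k\geq 1$ --- but these are subquotients of $\tilde\Omega^j_Y$, not terms of a resolution of $\Omega^j_{Y/s}$, and the bottom piece $\mathrm{gr}^W_0$, the image of $\Omega^j_{\underline Y}$, is not a strata sheaf; and (ii) a degree-preserving Mayer--Vietoris resolution of that bottom piece, $0\to \Omega^j_{\underline Y}/(\mathrm{tors})\to (a_1)_*\Omega^j_{Y^{(1)}}\to (a_2)_*\Omega^j_{Y^{(2)}}\to\cdots$, by alternating restrictions. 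Note also that the paper contains no proof of this lemma to compare against: it is quoted from Illusie's appendix \cite{PerrinRiouIllusie}, where the reduction to the strata is a d\'evissage combining (i) and (ii) with the exact sequence $0\to \Omega^{j-1}_{Y/s}\xrightarrow{\wedge d\log t}\tilde\Omega^j_Y\to \Omega^j_{Y/s}\to 0$; the degree-dropping pattern you want lives on the absolute log forms, not on $\Omega^j_{Y/s}$.

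Even setting this aside, the step you yourself flag as ``the main obstacle'' --- exactness of the sequence after intersecting with the subsheaves of exact forms --- is precisely where the content of the lemma lies, and it is asserted rather than proved. Taking $B(-)$ is not an exact operation, and in characteristic $p$ the required strictness (compatibility of the residue/restriction maps with the Cartier operator, and exactness of the induced sequences on the sheaves $Z$ and $B$ of closed and exact forms) is the bulk of Illusie's argument for the correct sequences; it cannot be discharged by appealing to an unspecified local computation on the standard model. So the proposal both builds on an exact sequence that does not exist in the stated form and defers the one step that carries the mathematical weight; the final spectral-sequence paragraph is the only part that is genuinely formal.
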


\begin{prop} \label{prop:X-Y-ordinary}
  Let $\mathfrak{X}$ be a proper and semistable formal scheme over $\cO_K$ with rigid generic fiber $X=\mathfrak{X}_K$ and log special fiber $Y = \mathfrak{X}_k$. Consider the following conditions.
  \begin{enumerate}[(a)]
    \item $Y$ is ordinary.
    \item $H^q_{\rm dR}(X/K)$, endowed with the structure of a weakly admissible filtered $(\varphi, N)$-module, is ordinary in the sense of \cite{PerrinRiou} for all $q\geq 0$, i.e.\ the Hodge polygon of $H^q_{\rm dR}(X/K)$ equals the Newton polygon for $H^q(Y)$.
    \item The $p$-adic Galois representations $H^q(X_{\widehat{\overline K}}, \QQ_p)$ are ordinary in the sense of \cite{PerrinRiou}, i.e.\ are iterated extensions of powers of the cyclotomic character.
  \end{enumerate}
  Then (a)$\Rightarrow$(b)$\Leftrightarrow$(c). Moreover, (b)$\Rightarrow$(a) if $H^*_{\text{log-cris}}(Y/W(k))$ is torsion-free and the Hodge cohomology groups $H^j(\mathfrak{X}, \Omega^i_{\mathfrak{X}/\cO_K})$ are free $\cO_K$-modules for all $i,j\geq 0$.
\end{prop}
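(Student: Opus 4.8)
The plan is to read the implications (a)$\Rightarrow$(b) and (b)$\Rightarrow$(a) as a comparison of the Newton and Hodge polygons of the filtered $(\varphi,N)$-module $H^q(Y)$, and to read (b)$\Leftrightarrow$(c) as a dictionary furnished by $p$-adic Hodge theory. For (a)$\Rightarrow$(b), I would first feed ordinarity of $Y$ into Lemma~\ref{lemma:ordinary-mod-tors}: it makes each $F$-crystal $H^q_{\text{log-cris}}(Y/W(k))/\text{tors}$ ordinary, so the Newton polygon of the isocrystal $H^q(Y)$ coincides with its Hodge polygon. By the Hyodo--Kato isomorphism $H^q(Y)\otimes_{K_0}K\isomto H^q_{\rm dR}(X/K)$ the target carries the de Rham Hodge filtration, and $H^q(Y)$ is a weakly admissible filtered $(\varphi,N)$-module (\cite{ColmezNiziol}); weak admissibility already forces the de Rham Hodge polygon (the one built from the $h^{i,j}(X)$) to lie on or below the Newton polygon, with matching endpoints. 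The remaining point is that for ordinary $Y$ the Hodge polygon of the $F$-crystal agrees with this de Rham Hodge polygon — a Mazur-type comparison that becomes an equality in the ordinary (Hodge--Witt) situation — so that the de Rham Hodge polygon is squeezed up to the Newton polygon, which is exactly (b).

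To run (b)$\Rightarrow$(a) I would reverse this under the stated hypotheses. Freeness of the relative Hodge cohomology groups $H^j(\mathfrak{X},\Omega^i_{\mathfrak{X}/\cO_K})$ gives, by base change, $h^{i,j}(X)=h^{i,j}(Y)$, and torsion-freeness of $H^*_{\text{log-cris}}(Y/W(k))$ together with degeneration of the log Hodge--de Rham spectral sequence identifies the Hodge polygon of the $F$-crystal with the one built from the $h^{i,j}(Y)$. Under these identifications (b) says precisely that the Newton polygon of $H^q(Y)$ equals the Hodge polygon of the $F$-crystal, i.e. that $H^q_{\text{log-cris}}(Y/W(k))$ is an ordinary $F$-crystal for every $q$. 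The converse half of Lemma~\ref{lemma:ordinary-mod-tors}, which is available exactly because the crystalline cohomology is torsion-free, then returns ordinarity of $Y$, i.e. (a).

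For the equivalence (b)$\Leftrightarrow$(c) I would pass to étale cohomology. Since $\mathfrak{X}$ is proper and semistable, the representation $H^q(X_{\widehat{\overline K}},\QQ_p)$ is semistable, and the semistable comparison theorem for formal schemes (\cite{ColmezNiziol}) identifies $D_{\rm st}$ of it with $H^q(Y)$ endowed with its $(\varphi,N,{\rm Fil}^\bullet)$-structure coming from \cite{HyodoKato}. Under this equivalence, Perrin-Riou's ordinary representations — iterated extensions of powers of the cyclotomic character — correspond exactly to those weakly admissible filtered $(\varphi,N)$-modules whose Newton and Hodge polygons coincide \cite{PerrinRiou}. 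This places (c) and (b) on the two sides of $D_{\rm st}$ and yields the equivalence formally.

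The genuine content, and the step I expect to be the main obstacle, is the matching of the $F$-crystal Hodge polygon with the de Rham Hodge polygon of $X$ — equivalently, controlling the gap between $h^{i,j}(Y)$ and $h^{i,j}(X)$ created by torsion in the crystalline and Hodge cohomologies. This is exactly why the converse (b)$\Rightarrow$(a) is stated with the torsion-freeness and freeness hypotheses, whereas (a)$\Rightarrow$(b) can lean on the fact that ordinarity of $Y$ already forces the two Hodge polygons to agree. The equivalence (b)$\Leftrightarrow$(c) is comparatively formal, but it rests on two nontrivial inputs used as black boxes — the semistable comparison for rigid/formal schemes and Perrin-Riou's classification of ordinary representations — whose hypotheses (semistable reduction and de Rhamness of $H^q_{\text{log-\'et}}$) I would check hold automatically in the present setting.
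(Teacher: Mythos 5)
Your reductions of (b)$\Leftrightarrow$(c) to the semistable comparison theorem of \cite{ColmezNiziol} plus Perrin-Riou's classification, and of (b)$\Rightarrow$(a) to the converse half of Lemma~\ref{lemma:ordinary-mod-tors} after identifying polygons via freeness of Hodge cohomology and torsion-freeness of crystalline cohomology, both follow the paper's route (the paper packages the last identification by citing \cite[Proposition~1.6~(a)(iv)]{PerrinRiouIllusie} rather than a log Mazur-type theorem plus Hodge--de Rham degeneration, which you assert but do not justify in the log setting). The genuine gap is in (a)$\Rightarrow$(b). Your argument there is: Newton polygon $=$ abstract Hodge polygon of the $F$-crystal (Lemma~\ref{lemma:ordinary-mod-tors}), de Rham Hodge polygon of $X$ lies below the Newton polygon with matching endpoints (weak admissibility), and then ``the remaining point'' that the abstract crystal Hodge polygon agrees with the de Rham Hodge polygon of $X$, which you call a Mazur-type comparison valid in the ordinary case. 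That last claim is not a black box you can invoke: Mazur-type comparisons between the abstract Hodge polygon of the crystal and geometric Hodge numbers require torsion-freeness (and degeneration) hypotheses, and the implication (a)$\Rightarrow$(b) is asserted \emph{without} any such hypotheses --- indeed the whole point of the proposition's asymmetry is that these integral comparisons can fail (cf.\ the supersingular Enriques example, where crystalline torsion makes the polygons of $Y$ and $X$ differ). Moreover the comparison you need involves the Hodge numbers of the generic fiber $X$, not of $Y$, so even a log Mazur theorem over $k$ would not suffice without the base-change/freeness input that is only assumed in the converse direction. Note also that your ``squeeze'' gives no traction: weak admissibility bounds the de Rham Hodge polygon from above by the Newton polygon, so knowing Newton $=$ crystal Hodge yields only the same upper bound, and the desired equality is exactly your unproven claim.

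What the paper does instead is invoke \cite[Corollaire~2.6]{PerrinRiouIllusie}: for ordinary $Y$, the Hodge filtration ${\rm Fil}^\bullet$ on $H^q_{\rm dR}(X/K)$ and the slope filtration $U_\bullet$ on $H^q(Y)$ are \emph{opposite}, i.e.\ $(U_{i-1}\otimes_{K_0}K)\oplus{\rm Fil}^i = H^q_{\rm dR}(X/K)$ for all $i$. This directly identifies the graded pieces of the two filtrations and hence the two polygons, with no reference to the abstract Hodge polygon of the crystal, no torsion hypotheses, and no appeal to weak admissibility. If you want to repair your argument, you should replace your Mazur-type step by this opposite-filtration theorem; as written, the step carrying all the content of (a)$\Rightarrow$(b) is missing.
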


\begin{proof}
Assume (a). By \cite[Corollaire~2.6]{PerrinRiouIllusie}, the Hodge filtration ${\rm Fil}^\bullet$ on $H^q_{\rm dR}(X/K)$ and the slope filtration $U_\bullet$ on $H^q(Y) = H^q_{\text{log-cris}}(Y/W(k))[1/p]$ are opposite in the sense that
\[ 
  (U_{i-1}\otimes_{W(k)[1/p]} K)\oplus {\rm Fil}^i = H^q_{\rm dR}(X/K)
\]
holds for all $i,q\geq 0$. This implies (b).

The equivalence of (b) and (c) follows from the semistable comparison theorem \cite{ColmezNiziol} and from \cite[Th\'eor\`eme~1.5]{PerrinRiou}.

For the final assertion, we observe that if Hodge cohomology is free, then the Hodge polygon of $H^q_{\rm dR}(X)$ equals the Hodge polygon defined by the Hodge numbers of $Y$. Since the crystalline cohomology is torsion-free, we can apply \cite[Proposition~1.6~(a)(iv)]{PerrinRiouIllusie} to conclude. 
\end{proof}

\begin{example}[Supersingular Enriques surfaces]
The following well-known example serves as a warning that some additional assumptions for the final assertion (b)$\Rightarrow$(a) in Proposition~\ref{prop:X-Y-ordinary} are necessary. 

Suppose $p=2$ and let $Y$ be a supersingular Enriques surface over $k$. Then $Y$ admits a~smooth lifting $\mathfrak{X}$ over a ramified extension of $W(k)$ \cite[Theorem~0.8]{EkedahlHylandShepherdBarron}. If $X$ is the generic fiber, then $H^*_{\rm dR}(X_{\overline K}/\overline K)$ is generated by algebraic cycles. Consequently, $h^{i,j}(X)$ vanishes for $i\neq j$ and the cohomology of $X$ is ordinary in the sense of Proposition~\ref{prop:X-Y-ordinary}(b). However, $Y$ is not ordinary, has torsion in $H^2_{\rm cris}$ and $H^3_{\rm cris}$, and the Hodge numbers of $Y$ differ from those of $X$ \cite[II 7.3]{IllusieDeRhamWitt}.
\end{example}

\begin{cor} \label{cor:ordinary-HS}
  In the context of Proposition~\ref{prop:linear-rel-hij}, suppose that the log special fiber $Y$ is ordinary. Then
  \[ 
    h^{i,j}(X) = h^{j,i}(X) 
    \quad\text{for}\quad
    i, j \geq 0.
  \]
\end{cor}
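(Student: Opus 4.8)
The plan is to read off the Hodge numbers $h^{i,j}(X)$ (with $i+j=q$) from the Frobenius slopes on $H^q(Y)$ using ordinarity, and then to transport the slope symmetry supplied by log hard Lefschetz into a symmetry of Hodge numbers. I would treat each cohomological degree $q$ separately and prove $h^{i,j}(X)=h^{j,i}(X)$ for $i+j=q$; letting $q$ range over all degrees (for which log hard Lefschetz is available) then yields the statement for all $i,j\ge 0$.

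First I would extract the two inputs attached to the $F$-isocrystal $H^q(Y)=H^q_{\text{log-cris}}(Y/W(k))[1/p]$. Because $Y$ is ordinary, the implication (a)$\Rightarrow$(b) of Proposition~\ref{prop:X-Y-ordinary} shows that the Newton polygon of $H^q(Y)$ coincides with the Hodge polygon of $H^q_{\rm dR}(X/K)$. Concretely this forces the Frobenius slopes $\alpha_0\le\cdots\le\alpha_s$ ($s=\dim_{K_0}H^q(Y)-1$) to be \emph{integers}, and the integer $i$ to occur among them with multiplicity exactly $\dim_K {\rm gr}^i_{\rm Fil}H^q_{\rm dR}(X/K)=h^{i,q-i}(X)$. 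That $\sum_{i+j=q}h^{i,j}(X)=\dim_{K_0}H^q(Y)$ follows from the Hyodo--Kato isomorphism, so the bookkeeping is consistent.

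Next I would invoke the slope symmetry of Proposition~\ref{prop:log-HL-implies-SS}: since log hard Lefschetz holds for $\mathcal{L}$ in degree $q$, the slopes satisfy $\alpha_{s-k}=q-\alpha_k$, i.e.\ the multiset $\{\alpha_k\}$ is stable under the involution $\alpha\mapsto q-\alpha$. Comparing multiplicities under this involution with the reading from the previous step, the multiplicity of the slope $i$ equals that of the slope $q-i$; that is, $h^{i,q-i}(X)=h^{q-i,i}(X)$. Setting $j=q-i$ gives $h^{i,j}(X)=h^{j,i}(X)$ for all $i+j=q$, and ranging over $q$ completes the argument.

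The only genuinely substantive point is the translation in the second paragraph: one must know that ``ordinary'' (equality of Newton and Hodge polygons) forces the Frobenius slopes to be integers matching the graded pieces of the Hodge filtration, with the multiplicities recorded above. This is precisely what ordinarity of the weakly admissible filtered $(\varphi,N)$-module $H^q(Y)$ encodes, so it suffices to quote Proposition~\ref{prop:X-Y-ordinary}(b); no extra estimate is required. I note in passing that the linear relation $\sum_{i+j=q}(i-j)\cdot h^{i,j}(X)=0$ of Proposition~\ref{prop:linear-rel-hij} is subsumed by the present argument and serves only as a consistency check once the slopes are known to be integers with these multiplicities.
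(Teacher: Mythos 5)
Your proposal is correct and is essentially the paper's own proof: both read off $h^{i,j}(X)$ as the multiplicity of the Frobenius slope $i$ on $H^{i+j}(Y)$ via Proposition~\ref{prop:X-Y-ordinary}(a)$\Rightarrow$(b), and then conclude from the slope symmetry $\alpha_{s-k}=q-\alpha_k$ of Proposition~\ref{prop:log-HL-implies-SS}. The extra bookkeeping you spell out (integrality of slopes, multiplicity matching, and the observation that Proposition~\ref{prop:linear-rel-hij} is subsumed) is consistent with, and just an expansion of, the paper's two-line argument.
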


\begin{proof}
By Proposition~\ref{prop:X-Y-ordinary}, the Hodge number $h^{i,j}(X)$ equals the multiplicity of $i$ as a~Frobenius slope on $H^{i+j}(Y)$. We conclude by Proposition~\ref{prop:log-HL-implies-SS}.
\end{proof}

Combining the above results with the established cases of log hard Lefschetz, we obtain the following unconditional result. 

\begin{thm} \label{thm:HS-p-adic}
  Let $\mathfrak{X}$ be a proper and semistable formal scheme over $\cO_K$ with rigid generic fiber $X=\mathfrak{X}_K$ and log special fiber $Y = \mathfrak{X}_k$ which is projective. Then:
  \begin{enumerate}[(a)] 
    \item \label{thm:HS-good-ordinary} If $\mathfrak{X}$ is smooth over $\cO_K$ and $Y$ is ordinary, then $h^{i,j}(X) = h^{j,i}(X)$ for $i,j\geq 0$.
    \item \label{thm:HS-H1} $h^{1,0}(X) = h^{0,1}(X)$.
    \item \label{thm:HS-comb-red} If every connected component $Z$ of some $Y^{(k)}$ has torsion-free crystalline cohomology and admits a smooth projective lifting $\mathfrak{Z}$ over $\cO_K$ such whose cohomology is generated by algebraic cycles and such that the Hodge cohomology groups $H^i(\mathfrak{Z}, \Omega^j_{\mathfrak{Z}/\cO_K})$ are free $\cO_K$-modules for $i,j\geq 0$, then 
    \[ 
      h^{i,j}(X) = h^{j,i}(X)
      \quad \text{for} \quad 
      i,j\geq 0.
    \] 
  \end{enumerate}
\end{thm}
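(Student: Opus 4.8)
The plan is to reduce each of the three parts to results already in place. For (a), smoothness of $\mathfrak{X}$ means $Y=\mathfrak{X}_k$ is smooth and its monodromy operator vanishes, so log crystalline cohomology agrees with the ordinary crystalline cohomology of $\underline{Y}$; the log hard Lefschetz maps are then the classical Lefschetz isomorphisms, which hold because crystalline cohomology restricted to smooth proper schemes is a Weil cohomology theory satisfying hard Lefschetz (\S\ref{ss:weight-ss}). Since $Y$ is ordinary by hypothesis, Corollary~\ref{cor:ordinary-HS} applies verbatim and gives $h^{i,j}(X)=h^{j,i}(X)$. For (b), the underlying scheme $\underline{Y}$ is projective and hence carries an ample line bundle, so Theorem~\ref{thm:log-HL-H1} provides log hard Lefschetz in degree $q=1$; as $q=1\le 2$, the final clause of Proposition~\ref{prop:linear-rel-hij} yields $h^{1,0}(X)=h^{0,1}(X)$ immediately.

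The real content is (c). The strategy is to verify that, in log crystalline cohomology, every connected component $Z$ of every $Y^{(k)}$ satisfies conditions (A) and (B) of \S\ref{ss:comb-type}, so that Corollary~\ref{cor:log-HL-comb} furnishes log hard Lefschetz for $Y$ in all degrees; to show independently that $Y$ is ordinary; and then to conclude by Corollary~\ref{cor:ordinary-HS}. The hypotheses on the liftings $\mathfrak{Z}$ thus do double duty, feeding both the combinatorial log hard Lefschetz of \S\ref{s:logHL-comb} and the ordinarity input.

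For condition (A), I would argue by specialization together with a dimension count. Each algebraic cycle on the generic fibre $\mathfrak{Z}_K$ has a closure in the proper $\cO_K$-scheme $\mathfrak{Z}$ which specializes to a cycle on $Z$; because $H^*_{\rm cris}(Z/W)$ is torsion-free and $H^i(\mathfrak{Z},\Omega^j_{\mathfrak{Z}/\cO_K})$ is free, the crystalline--de Rham comparison identifies $H^*_{\rm cris}(Z/W)[1/p]\otimes_{K_0}K$ with $H^*_{\rm dR}(\mathfrak{Z}_K/K)$ compatibly with cycle classes, and in particular the odd crystalline cohomology of $Z$ vanishes. As the chosen cycles span on the generic fibre, their specializations span $H^{2i}_{\rm cris}(Z)[1/p]$ over $K_0$ by faithfully flat descent, giving (A). For condition (B), I first note that $H^2(Z,\cO_Z)=0$, since the cohomology of $Z$ is algebraic and hence concentrated on the Hodge diagonal; therefore the obstruction to deforming $\mathcal{L}|_Z$ vanishes and, by Grothendieck existence, $\mathcal{L}|_Z$ lifts to a line bundle on $\mathfrak{Z}$ whose generic fibre $\mathcal{M}$ is ample on $\mathfrak{Z}_K$ (ampleness on the closed fibre forces it on the generic one, the special point lying in every nonempty open of $\Spec\cO_K$). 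Since the Lefschetz operator and all intersection numbers are locally constant in the flat family $\mathfrak{Z}/\cO_K$, the Lefschetz pairing $\langle\,,\rangle$ of \S\ref{ss:weight-ss} on primitive algebraic classes for $(Z,\mathcal{L}|_Z)$ is carried to the corresponding pairing for $(\mathfrak{Z}_K,\mathcal{M})$; the latter is definite of the predicted sign by the classical Hodge--Riemann bilinear relations, so the Hodge standard conjecture holds for $(Z,\mathcal{L}|_Z)$.

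Finally, I would deduce ordinarity from the same algebraicity. The crystalline class of a codimension-$i$ cycle has Frobenius slope $i$, so each $H^{2i}_{\rm cris}(Z)$ is isoclinic of slope $i$ and its Newton polygon is a single segment of slope $i$; freeness of the Hodge cohomology of $\mathfrak{Z}$ identifies the Hodge numbers of $Z$ with those of $\mathfrak{Z}_K$, which vanish off the diagonal, so the Hodge polygon equals the Newton polygon. By Lemma~\ref{lemma:ordinary-mod-tors} (using torsion-freeness of $H^*_{\rm cris}(Z/W)$) each $Z$, and hence each $Y^{(k)}$, is ordinary, and Lemma~\ref{lemma:Yi-Y-ordinary} propagates ordinarity to $Y$. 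Together with the log hard Lefschetz supplied by Corollary~\ref{cor:log-HL-comb}, Corollary~\ref{cor:ordinary-HS} then yields full Hodge symmetry for $X$. I expect the main difficulty to reside in condition (B): one must verify that the \emph{primitive} part of the Lefschetz pairing is genuinely governed by deformation-invariant intersection numbers with the signs dictated by \S\ref{ss:comb-type}, and confirm the compatibility of the crystalline and de Rham cycle class maps underlying (A).
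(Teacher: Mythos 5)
Your proposal reproduces the paper's proof in all essentials. Parts (a) and (b) are handled exactly as in the paper (classical hard Lefschetz plus Corollary~\ref{cor:ordinary-HS}, respectively Theorem~\ref{thm:log-HL-H1} plus Proposition~\ref{prop:linear-rel-hij}), and for part (c) you verify the same three conditions --- (A) algebraicity, (B) the Hodge standard conjecture, (C) ordinarity --- for every component $Z$, and conclude through Corollary~\ref{cor:log-HL-comb}, Lemma~\ref{lemma:Yi-Y-ordinary} and Corollary~\ref{cor:ordinary-HS}, which is precisely the paper's scheme. Your argument for (A) (specialization of cycles plus the Berthelot--Ogus comparison) is the paper's; your (B) differs only in that you lift the line bundle itself by deformation theory and Grothendieck existence, instead of lifting a multiple of its class through the a posteriori isomorphism $A^1(W)\simeq A^1(Z)$, where $W=\mathfrak{Z}_{\overline K}$, as the paper does; both routes work.

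Two of your justifications, however, are faulty as written. First, in (B) you deduce $H^2(Z,\cO_Z)=0$ ``since the cohomology of $Z$ is algebraic and hence concentrated on the Hodge diagonal.'' In characteristic $p$ this inference is invalid: algebraicity of the rational crystalline cohomology of $Z$ does not control its coherent or Hodge cohomology (the paper's supersingular Enriques surfaces are a warning of exactly this kind --- algebraic cohomology upstairs, wrong Hodge numbers downstairs). The vanishing is true, but only via the freeness hypothesis: $H^2(Z,\cO_Z)\simeq H^2(\mathfrak{Z},\cO_{\mathfrak{Z}})\otimes_{\cO_K}k$ has dimension $h^{0,2}(W)$, which vanishes because algebraicity in characteristic \emph{zero} does force Hodge classes onto the diagonal; this is the reasoning you use later for ordinarity, and it must already be invoked here. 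Second, your ordinarity step appeals to the converse in Lemma~\ref{lemma:ordinary-mod-tors}, which requires the Newton polygon of the $F$-crystal $H^{2i}_{\rm cris}(Z)$ to equal its \emph{abstract} Hodge polygon (read off from the elementary divisors of Frobenius), whereas what you have proved is equality of the Newton polygon with the \emph{geometric} Hodge polygon of $Z$. These need not agree a priori: an isoclinic $F$-crystal can have strictly lower abstract Hodge polygon (for instance $F(e_1)=e_2$, $F(e_2)=p^{2i}e_1$ has both Newton slopes equal to $i$ but Hodge slopes $0$ and $2i$). The statement you actually need is the final assertion of Proposition~\ref{prop:X-Y-ordinary} (resting on \cite[Proposition~1.6~(a)(iv)]{PerrinRiouIllusie}), whose hypotheses --- torsion-free crystalline cohomology and free Hodge cohomology of the lift --- are exactly the ones you have checked; this is also the route the paper takes. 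With these two corrections your argument coincides with the paper's proof.
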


\begin{proof}
(a) Since in this case log hard Lefschetz amounts to the usual hard Lefschetz theorem, the statement follows from Proposition~\ref{cor:ordinary-HS}.

(b) Follows from Theorem~\ref{thm:log-HL-H1} and Proposition~\ref{prop:linear-rel-hij}.

(c) In light of Corollary~\ref{cor:ordinary-HS}, Proposition~\ref{lemma:Yi-Y-ordinary}, and Corollary~\ref{cor:log-HL-comb}, it suffices to prove that every component $Z$ of some $Y^{(k)}$ satisfies the following three conditions (see \S\ref{ss:alg-cycles} for the terminology regarding (A) and (B)):
\begin{enumerate}
  \item[(A)] The crystalline cohomology $H^*(Z)$ is generated by algebraic cycles, 
  \item[(B)] The Hodge standard conjecture holds for $Z$,
  \item[(C)] $Z$ is ordinary.
\end{enumerate}
We may assume that $k$ is algebraically closed. Let $\mathfrak{Z}$ be a smooth projective lifting of $Z$ over $\cO_K$ such that the geometric generic fiber $W = \mathfrak{Z}_{\overline K}$ has cohomology generated by algebraic cycles. Since $W$ lives in characteristic zero, it does not matter which cohomology theory we choose, and we prefer to use de Rham cohomology. We have specialization maps $A^i(W)\to A^i(Z)$ (see \cite[20.3.5]{Fulton}) fitting inside a commutative diagram 
\[ 
  \xymatrix{
    A^i(W)\ar[d] \ar[r] & A^i(Z)\ar[d] \\
    H^i_{\rm dR}(W/\overline K)  & H^i_{\rm cris}(Z/W(k))[1/p] \ar[l],
  }
\]
which after making everything $\overline K$-linear becomes
\[ 
  \xymatrix{
    A^i(W)\otimes_{\QQ} \overline K \ar[r] \ar[d] & A^i(Z)\otimes_{\QQ} \overline K\ar[d] \\
    H^i_{\rm dR}(W/\overline K) & H^i_{\rm cris}(Z/W(k))\otimes_{W(k)} \overline K. \ar[l]
  }
\]
Here, the left map is an isomorphism by assumption and the bottom map is an isomorphism by Berthelot--Ogus. Therefore the right map is surjective, showing (A). A~posteriori, all maps above are isomorphisms (see \S\ref{ss:alg-cycles}).

Using this with $i=1$, we see that a multiple of a given ample line bundle $\mathcal{L}$ lifts to $W$. Since $W$ is in characteristic zero, the Hodge standard conjecture holds for that lift of a multiple of $\mathcal{L}$, but since $A^*(W)\simeq A^*(Z)$, the Hodge standard conjecture holds for $Z$, proving (B).

Finally, since algebraic cohomology classes are of type $(i,i)$, the assumption on $W$ also shows that $h^{i,j}(W) = 0$ for $i\neq j$. Therefore the Hodge polygon for $H^i_{\rm dR}(W/\overline K)$ is a single segment, and hence it coincides with the Newton polygon of $H^i(Z)$. By the final assertion of Proposition~\ref{prop:X-Y-ordinary}, $Z$ is ordinary, showing (C). 
\end{proof}

\begin{rmk} \label{rmk:joshi}
In \cite{Joshi}, based on the symmetry of slope numbers \cite[VI \S 3]{Ekedahl}, Joshi shows that Hodge symmetry holds for a smooth and proper $Y$ over $k$ with degenerate Hodge spectral sequence and torsion-free crystalline cohomology which is Hodge--Witt (i.e.\ all $H^j(Y, W\Omega^i_Y)$ finitely generated). The Hodge--Witt property is weaker than ordinarity, and it is plausible that in Theorem~\ref{thm:HS-p-adic}(\ref{thm:HS-good-ordinary}) one can replace \emph{ordinary} by \emph{Hodge--Witt}.
\end{rmk}

\begin{example}[Cellular varieties] \label{ex:cellular}
Though the assumptions of Theorem~\ref{thm:HS-p-adic}(\ref{thm:HS-comb-red}) may look a bit discouraging, they are satisfied in many situations of geometric interest. Recall that a smooth proper variety $Z$ is \emph{cellular} if it admits a stratification $Z=\coprod_{\alpha\in I} Z_\alpha$ where each $Z_\alpha$ is isomorphic to some affine space $\Aff^{n_\alpha}_k$. If $I_k\subseteq I$ ($0\leq k\leq \dim Z$) denotes the set of strata of codimension $k$, then the cycle classes of the closures $\overline{Z_\alpha}$ ($\alpha\in I_k$) give an isomorphism
\[ 
  {\rm CH}^k(Z) = \bigoplus_{\alpha\in I_k} \ZZ \cdot [\overline{Z_\alpha}] 
  \quad 
  \text{(see \cite[Lemma~3.3]{Kahn})}.
\]
One can upgrade it to a decomposition of Chow motives
\[ 
  M(Z) = \bigoplus_{k=0}^n {\rm CH}^k(Z)\otimes \ZZ(k)[2k],
  \quad
  \text{\cite[Corollary~3.5]{Kahn}}.
\]
Consequently, for each of the cohomology theories
\[ 
  H^* \in \{\text{$\ell$-adic, crystalline, de Rham, Hodge, Hodge--Witt} \}
\]
one has a decomposition
\[ 
  H^*(Z) = \bigoplus_{k=0}^n {\rm CH}^k(Z)\otimes H^0({\rm pt})[2k],
\]
see \cite[Corollary~2.8]{AchingerZdanowicz}. In particular, the crystalline cohomology of $Z$ is torsion-free and $Z$ is ordinary.

Suppose now that $Z$ admits a \emph{cellular lift} $\mathfrak{Z}$ over $\cO_K$, i.e.\ a smooth lifting together with a stratification $\mathfrak{Z} = \coprod_{i\in I} \mathfrak{Z}_\alpha$ where $\mathfrak{Z}_\alpha \simeq \Aff^{n_\alpha}_{\cO_K}$ with $(\mathfrak{Z}_\alpha)_k = Z_\alpha$. Then the above assertions (of course, with the exception of crystalline and Hodge--Witt cohomology) hold for the geometric generic fiber $W = \mathfrak{Z}_{\overline K}$. This implies in particular that $H^j(\mathfrak{Z}, \Omega^i_{\mathfrak{Z}/\cO_K})$ are free.

Well-known examples of varieties admitting a cellular decomposition include toric varieties, rational homogeneous spaces, or more generally every smooth projective variety admitting an algebraic torus action with finitely many fixed points \cite{BialynickiBirula}. 
\end{example}


\bibliographystyle{amsalpha} 
\bibliography{bib}

\end{document}